\numberwithin{equation}{section}
\DeclareFontShape{OMX}{cmex}{m}{n}{
  <-7.5> cmex7
  <7.5-8.5> cmex8
  <8.5-9.5> cmex9
  <9.5-> cmex10
}{}
\newcommand{\R}{\mathbb{R}}
\newcommand{\E}{\mathbb{E}}
\newcommand{\N}{\mathbb{N}}
\newcommand{\cB}{\mathcal B}
\newcommand{\cF}{\mathcal F}
\newcommand{\cN}{\mathcal{N}}
\newcommand{\cT}{\mathcal{T}}
\newcommand{\cS}{\mathcal{S}}
\renewcommand{\d}{\mathrm{d}}
\renewcommand{\P}{\mathbb{P}}
\newcommand{\ind}[1]{\mathbbm{1}_{\{ #1\}}}
\newcommand{\indset}[1]{\mathbbm{1}_{#1}}
\newcommand{\ep}{\varepsilon}
\newtheorem{maintheorem}{Theorem}[section]
\newtheorem{theorem}{Theorem}[section]
\newtheorem*{theorem*}{Theorem}
\newtheorem{lemma}[theorem]{Lemma}
\newtheorem{proposition}[theorem]{Proposition}
\theoremstyle{definition}{

\newtheorem*{definition*}{Definition}

\newtheorem*{question*}{Question}
\newtheorem*{example*}{Example}
\newtheorem*{examples*}{Examples}
\newtheorem{remark}[theorem]{Remark}
\newtheorem*{remark*}{Remark}

\newtheorem*{claim*}{Claim}
}
\renewcommand{\bar}[1]{\overline{#1}}
\newcommand{\Cov}{\operatorname{Cov}}
\newcommand{\Var}{\operatorname{Var}}
\newcommand{\given}{\;\big|\;}
\newcommand{\ao}{a^{(1)}}
\newcommand{\at}{a^{(2)}}
\newcommand{\ai}{a^{(i)}}
\newcommand{\gi}{g^{(i)}}
\newcommand{\Co}[3]{C_{#1,#2}^{(1)}(#3)}
\newcommand{\Ct}[3]{C_{#1,#2}^{(2)}(#3)}
\newcommand{\Ci}[3]{C_{#1,#2}^{(i)}(#3)}
\newcommand{\ta}{\widetilde{a}}
\newcommand{\tg}{\widetilde{g}}
\newcommand{\tS}{\widetilde{S}}
\newcommand{\tZ}{\widetilde{Z}}
\title{Absolute continuity of non-Gaussian and Gaussian multiplicative chaos measures}
\author[Y. H. Kim]{Yujin H. Kim \orcidlink{0000-0001-8457-2548}}
\address{Y.\ H.\ Kim\hfill\break
The Division of Physics, Mathematics and Astronomy \\ 
California Institute of Technology\\
1201 E California Blvd \\ Pasadena, CA 91125, USA.}
\email{yujin@caltech.edu}
 \author[X. Kriechbaum]{Xaver Kriechbaum \orcidlink{0009-0005-4470-299X}}
\address{X.\ Kriechbaum\hfill\break
Department of Mathematics\\
Weizmann Institute of Science\\
234 Herzl Street\\
Rehovot 76100, Israel\break
\hphantom{ad}\emph{Current address:}\hfill\break
Institut de Math\'{e}matiques de Toulouse\\
Universit\'{e} de Toulouse\\
118 route de Narbonne\\
31062 Toulouse Cedex 9, France}
\email{xaver.kriechbaum@math.univ-toulouse.fr}
\begin{document}

\begin{abstract}
In this article, we consider the multiplicative chaos measure associated to the log-correlated random Fourier series, or random wave model, with i.i.d.\ coefficients taken from a general class of distributions. This measure was shown to be non-degenerate when the inverse temperature is subcritical by Junnila (Int.\ Math.\ Res.\ Not.\ \textbf{2020} (2020), no.\ 19, 6169-6196). When the coefficients are Gaussian, this measure is an example of a Gaussian multiplicative chaos (GMC), a well-studied universal object in the study of log-correlated fields.
In the case of non-Gaussian coefficients, the resulting chaos is \emph{not} a GMC in general. However, we construct a coupling between the non-Gaussian multiplicative chaos measure and a GMC such that the two are almost surely mutually absolutely continuous.  
\end{abstract}

{\mbox{}
\maketitle
}
\vspace{-.5cm}

\section{Introduction}
This article considers the multiplicative chaos measures associated to non-Gaussian log-correlated fields and their shared properties with respect to \emph{Gaussian multiplicative chaos (GMC)}, formalized through the relation of absolute continuity. 
The model we analyze is the log-correlated random Fourier series, or random wave model. We focus on the space $[0,1]$ for ease of exposition and address general bounded domains in $\R^d$, $d\geq 1$, in \cref{subsec:general-bounded-domains}.
We begin by defining the random Fourier series and stating the main theorem. After this, we review GMC, discuss related literature in the non-Gaussian setting, and provide an outline of the proof of the main theorem.

Consider the random Fourier series 
\begin{align}\label{def:trig-field}
    S_{n,a}(t) := \sum_{k=1}^n \frac{1}{\sqrt{k}} \Big( a_k^{(1)} \cos(2\pi k t) + a_k^{(2)} \sin(2\pi kt)\Big)\,, \quad t\in[0,1]\,,
\end{align}
where $a:= (a_k^{(i)})_{k\in \N, i\in\{1,2\}}$ denotes a sequence of i.i.d.\ random variables such that
\begin{equation}\label{eq:CondsOnak}
\mathbb{E}[\ao_1] = 0,\quad \Var[\ao_1] = 1,\quad\text{and}\quad \mathbb{E}[e^{\lambda \ao_1}]<\infty\ \text{for all}\ \lambda\in\R \,.
\end{equation} 
Now, consider the measure on $[0,1]$, with inverse temperature $\gamma>0$, defined by
\begin{equation} \label{eq:PreLimGMC}
    \mu_{n,\gamma,a}(\d t) := \frac{e^{\gamma S_{n,a}(t)}}{Z_{n,\gamma,a}(t)} \, \d t\,,
\end{equation}
where  $Z_{n,\gamma,a}(t) := \mathbb{E}\left[e^{\gamma S_{n,a}(t)}\right]$  so that $\mu_{n,\gamma,a}[A]$ is a martingale with respect to the natural filtration of $a$ for all Borel sets $A$.

In \cite{Junnila}, Junnila showed that for $\gamma \in (0,\sqrt2)$  there exists a non-degenerate measure $\mu_{\gamma,a}$ such that  $\mu_{n,\gamma,a}$ converges 
almost surely
to  $\mu_{\gamma,a}$ as $n\to\infty$ in the space of Radon measures on $[0,1]$ equipped with the topology of weak convergence (for $\gamma \geq \sqrt2$, the limiting measure is degenerate).
We call $\mu_{\gamma,a}$ the \emph{multiplicative chaos}  measure associated to the fields $(S_{n,a})_{n\in\N}$.

When $a= g$ is a sequence of i.i.d.\ standard Gaussians, the non-degeneracy of $\mu_{\gamma,g}$ has been known since Kahane \cite{kahanegmc}, and $\mu_{\gamma,g}$ is called Gaussian multiplicative chaos (GMC). 
GMC is a fractal random measure with fascinating properties and many important applications in modern probability; we review this object in Section~\ref{Sec:GMCBackGround} below.
Here, we mention that GMC is a universal object expected to appear when exponentiating a field that exhibits asymptotically Gaussian behavior and covariance diverging logarithmically on short scales.

While the field $(S_{n,a}(t))_{t\in[0,1]}$ is log-correlated (shown in \cite[Section~3.1]{Junnila}),
the chaos measure $\mu_{\gamma,a}$ is \emph{not necessarily} a GMC in general
due to the contribution of, for instance, the first (non-Gaussian) term in the sum defining $S_{n,a}(t)$. This begs the question: \textit{for general sequences $a$, what properties does $\mu_{\gamma,a}$  share with GMC?}

Our main result addresses this question through the lens of absolute continuity.

\begin{maintheorem}\label{Theo:Main}
Let $\gamma\in (1,\sqrt{2})$ and take $(\ai_k)_{i\in\{1,2\},k\in\N}$ i.i.d.\@ satisfying \cref{eq:CondsOnak}. There exists a sequence of i.i.d.\@  standard  Gaussian random variables $g:= (g_k^{(i)})_{i\in\{1,2\},k\in\N}$ coupled to $a:= (\ai_k)_{i\in\{1,2\},k\in\N}$ such that 
\[
\mu_{\gamma,g} \ll \mu_{\gamma,a} \ll\mu_{\gamma,g} \qquad\text{almost surely.}
\]
\end{maintheorem}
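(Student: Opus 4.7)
The plan is to construct a coupling of $a$ and $g$ under which the difference field
\[
D_n(t) := S_{n,a}(t) - S_{n,g}(t) = \sum_{k=1}^n \frac{1}{\sqrt k}\bigl((\ao_k - \go_k)\cos(2\pi kt) + (\at_k - \gt_k)\sin(2\pi kt)\bigr)
\]
converges almost surely, uniformly in $t \in [0,1]$, to a random continuous function $D$. Granted this, the theorem follows quickly: from \eqref{eq:PreLimGMC},
\[
\frac{\mu_{n,\gamma,a}(\d t)}{\mu_{n,\gamma,g}(\d t)} = e^{\gamma D_n(t)}\cdot \frac{Z_{n,\gamma,g}(t)}{Z_{n,\gamma,a}(t)},
\]
and \eqref{eq:CondsOnak} together with the cumulant expansion $\log \E[e^{\lambda \ao_1}] = \lambda^2/2 + \sum_{j\geq 3}\kappa_j \lambda^j/j!$ (of infinite radius of convergence) shows that $\log Z_{n,\gamma,g}(t) - \log Z_{n,\gamma,a}(t)$ is an absolutely convergent series whose $k$-th term is $O(k^{-3/2})$ uniformly in $t$, converging uniformly on $[0,1]$ to a continuous positive function $\psi$. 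Combined with $D_n\to D$ uniformly, the Radon--Nikodym prefactor converges uniformly to the positive continuous function $e^{\gamma D}\psi$; testing against continuous functions and invoking the weak-in-probability convergence $\mu_{n,\gamma,g}\to\mu_{\gamma,g}$ yields $\mu_{\gamma,a}(\d t) = e^{\gamma D(t)}\psi(t)\,\mu_{\gamma,g}(\d t)$ almost surely, giving mutual absolute continuity.

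\textbf{The coupling via dyadic blocks.} To build the coupling, decompose $\N$ into dyadic blocks $B_j := \{2^j,\ldots,2^{j+1}-1\}$, $j\geq 0$, and couple block-by-block. The block trigonometric polynomial
\[
R_j^a(t) := \sum_{k\in B_j}\frac{1}{\sqrt k}\bigl(\ao_k\cos(2\pi kt) + \at_k\sin(2\pi kt)\bigr)
\]
is built from $|B_j|=2^j$ independent summands, each of $L^\infty([0,1])$-size $\asymp 2^{-j/2}$ on sub-Gaussian scales (by \eqref{eq:CondsOnak}), and has covariance matching that of the Gaussian analogue $R_j^g$ built from i.i.d.\ standard Gaussians $(\go_k,\gt_k)_{k\in B_j}$, since only the first two moments of $\ao_k$ enter. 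A quantitative functional Gaussian approximation (Zaitsev- or Yurinskii-type), applied after discretizing $t$ at the Nyquist scale $\asymp 2^{-j}$ and interpolating between mesh points via chaining, produces a coupling satisfying
\[
\|R_j^a - R_j^g\|_{L^\infty[0,1]} \leq C_j \quad\text{with}\quad \sum_j C_j < \infty \text{ a.s.}
\]
Splicing the block couplings gives a genuinely i.i.d.\ standard Gaussian array $(\gi_k)_{i,k}$, and $D := \sum_j (R_j^a - R_j^g)$ is the required continuous function by Borel--Cantelli.

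\textbf{Main obstacle.} The principal difficulty is establishing the block-level functional coupling above with summable sup-norm errors. Scalar KMT-type strong approximations deliver a partial-sum coupling error of order $\log|B_j| = O(j)$ on block $B_j$, rescaling to a \emph{pointwise} error $O(j\cdot 2^{-j/2})$ in $t$ -- comfortably summable. Upgrading to a uniform-in-$t$ bound requires applying the pointwise estimate across an $O(2^j)$-point grid, union-bounding, and controlling the oscillation between mesh points via a chaining argument that exploits the full exponential moment assumption \eqref{eq:CondsOnak} and Bernstein-type inequalities for trigonometric polynomials. The restriction $\gamma \in (1,\sqrt 2)$ reflects the $L^1$-but-not-$L^2$ regime, where $L^2$-based comparisons fail and the strong-approximation machinery must be pushed hard; balancing the polynomial-in-$j$ KMT error against the geometric $2^{-j/2}$ decay, and ensuring almost-sure (not just in-expectation) summability of $C_j$, is the subtle step. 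The remaining check -- that the prelimit identity for $\mu_{n,\gamma,a}/\mu_{n,\gamma,g}$ passes to the limit -- follows routinely from uniform convergence of the continuous prefactor and weak-in-probability convergence of the approximating measures.
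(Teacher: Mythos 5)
Your proposal has the right skeleton (dyadic block decomposition, Yurinskii/Zaitsev-type coupling applied block by block, chaining to interpolate between mesh points), but the central claim — that the block couplings can be chosen with \emph{uniformly-in-$t$} summable errors $\sum_j C_j < \infty$ almost surely — is exactly the step that fails, and no amount of pushing the strong-approximation machinery will fix it. Concretely: at block $j$ you are coupling a $K \asymp 2^j$-dimensional vector (the values of $R_j^a$ at the Nyquist grid) built from $N \asymp 2^j$ independent summands. The Yurinskii error the paper records in \cref{Lem:YurCoupl} is, after optimizing, of order $N^{-1/2} K \cdot (\text{polylog})$ in the quantity $\beta$, i.e.\ $\asymp 2^{j/2} j$; the resulting $L^\infty$ coupling error is then not summable — in fact it diverges. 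Multidimensional Zaitsev/KMT strong approximations have the same obstruction: their error terms carry polynomial (or worse) dependence on the dimension $K$, which for $K\asymp 2^j$ overwhelms the $2^{-j/2}$ gain from the central limit effect. The scalar KMT rate $O(\log N)$ you invoke does not transfer to a single coupling that is simultaneously good at $O(2^j)$ mesh points.

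The missing idea is the paper's thick-point restriction. Instead of coupling the entire level-$j$ vector, one only couples the increments over the $\approx 2^{-\frac{(\gamma-\delta)^2-1}{4}n}\cdot 2^n$-many descendants of time-$(n)$ $(\gamma-\delta)$-thick points (\eqref{def:time-n-thick}, \cref{Lem:CouplThick}). This reduces $K$ to $o(2^{n/2})$ precisely when $\gamma > 1$, which is where the restriction to $(1,\sqrt 2)$ really comes from — not from ``$L^2$-comparisons failing'' (the $L^2$-regime is typically \emph{easier}, as the paper itself remarks). Because the chaos measure is carried on thick points (\cref{Lem:SupportThick}), a coupling that is good only there suffices. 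The price is that the Radon–Nikodym derivative is then \emph{not} a.s.\ bounded — the paper explicitly flags this in contrast to \cite{SW20} — so your clean picture ``$\mu_{\gamma,a} = e^{\gamma D}\psi\,\mu_{\gamma,g}$ with $D$ continuous'' cannot hold for the coupling the theorem produces. Consequently the easy passage-to-the-limit you describe in your Plan is not available either: the paper instead proves convergence and uniform integrability of the candidate Radon–Nikodym derivative $\widetilde R_{n,\gamma}$ (items \eqref{it:RNconv}–\eqref{it:RNUnifInt} in \cref{sec:pf-HierGaussAppr}) via barrier events and second-moment estimates (\cref{Lem:SecondMoment,Lem:FirstMoment,Lem:CovarTerm}), which is where most of the work lies.
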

The $(\gi_k)_{k\in\N, i\in\{1,2\}}$ are defined in \eqref{def:iid-gaussians}, which makes use of \eqref{Def:Coeffs}, \eqref{def:time-n-thick} and Lemma~\ref{Lem:CouplThick}. We expect \cref{Theo:Main} can assist in proving a variety of almost-sure properties of both $\mu_{\gamma,a}$ and the associated approximating sequence of non-Gaussian fields $(S_{n,a})_{n\in\N}$ shared by the GMC $\mu_{\gamma,g}$ and the Gaussian log-correlated fields $(S_{n,g})_{n\in\N}$, for $\gamma \in (1,\sqrt2)$.

\begin{remark}[Higher dimensions]\label{rk:higher-dim}
In \cref{thm:general}, we extend our result to random Fourier series in bounded domains in $\R^d$. 
\end{remark}

\begin{remark}[The result holds in the full subcritical regime on unit cubes]
Our proof uses as a blackbox a high-dimensional coupling result dating back to Yurinskii \cite{yurinskii1978error}. Sometime after the posting of this paper, a new high-dimensional coupling was developed in \cite{ChowGang}. They verify that replacing the Yurinskii coupling with the coupling of \cite{ChowGang} in our proof and then following the steps of our proof exactly  yields \cref{Theo:Main} in the full subcritical regime $(0,\sqrt{2})$. In regards to higher dimensions (see \cref{rk:higher-dim}), they show that for unit cube domains $D = (0,1]^d$, their coupling tool applies in the same way to yield \cref{thm:general} in the full subcritical regime $(0,\sqrt{2d})$. In \cref{subsec:proof-outline}, we discuss in more detail why our application of the Yurinskii coupling requires $\gamma >\sqrt{d}$. See Remark~\ref{rk:full-subcritical-higher-dim} for more on the restriction of \cite{ChowGang} to unit cubes.
\end{remark}

\subsection{Background on GMC}\label{Sec:GMCBackGround}
Gaussian multiplicative chaos was introduced by Kahane in \cite{kahanegmc} as a mathematical model for energy dissipation in turbulence, making rigorous a program initiated by Mandelbrot in \cite{mandelbrot-turbulence}.
In \cite{kahanegmc}, considering a locally compact metric space $(T,\rho)$ with reference (Radon) measure $\sigma$, Kahane gave a rigorous interpretation of the measure
\begin{equation} \label{eq:ExpMeasure}
  M_{\gamma}(\d x) := e^{\gamma X(x)-\frac{\gamma^2}{2}\Var[X(x)]} \ \sigma(\mathrm{d}x) \,,
\end{equation}
where $(X(x))_{x\in T}$ formally denotes a Gaussian field with $\sigma$-positive covariance kernel
\begin{equation} \label{eq:LogKernel}
  R(x,y) = \max\{0, \log(\rho(x,y)^{-1})\}+g(x,y)\,,
\end{equation}
for $g$ bounded and continuous.
The kernel $R$ is of $\sigma$-positive type if there exists a sequence $(R_i)_i$ of continuous, pointwise nonnegative, and positive definite kernels $R_i : T^2 \to \R_+$ with \begin{equation}\label{eq:sigmaType}
R = \sum_i R_i \,. 
\end{equation} 
Formally, \cref{eq:sigmaType} means that the Gaussian field $S$ can be approximated by the (pointwise-defined) Gaussian fields $S_n = \sum_{i=1}^n X_i$, where $X_i$ is an independent centered Gaussian with covariance $R_i$. 
The measure $M_{\gamma}$ in~\cref{eq:ExpMeasure} is then defined as the limit of the martingale sequence
\[
M_{n,\gamma}(\d x) := e^{\gamma S_n(x)-\frac{\gamma^2}{2}\Var[S_n(x)]} \ \sigma(\d x)\,.
\]
 An important property proven in \cite{kahanegmc} is that the law of $M_{\gamma}$
is independent of the sequence $(S_n)_{n\in\N}$ approximating $S$, so that we can associate a \emph{unique} measure $M_{\gamma}$, called GMC, to the log-correlated Gaussian field $(X(x))_{x\in T}$. The later result of \cite{shamov2016gaussian} shows that two approximating sequences yield the same $M_{\gamma}$ almost surely.

Removing the restriction \cref{eq:sigmaType} was achieved in several steps. First, \cite{robert2010gaussian} defined $M_{\gamma}$ in the case that $R$ is a stationary positive definite kernel of the form \cref{eq:LogKernel} using a mollification approach, which in particular does not rely on a martingale approximation. Later, \cite{shamov2016gaussian} defined the GMC associated to general Gaussian fields using an approach based on random shifts. Finally, \cite{BerestyckiSimplePath} simplified the mollification approach and gave a simple proof that the limiting measure is independent of the chosen mollification procedure.
More on these approaches, as well as a rich theory on the behavior of GMC and its applications, can be found in \cite{RV14,berestycki-powell}.

There are various examples of log-correlated fields that are non-Gaussian in the pre-limit producing a limiting GMC  coming from random matrix theory; see, for instance, \cite{webb15,berestycki2018random,CN19,NSW20,claeys2021much,bf22,kivamae24,junnila2024multiplicative,lambert-najnudel}. 
In \cite{claeys2021much}, conditions on the exponential moments of the field under which one has a GMC limit are developed. These criteria and the fact that GMC is supported on its thick points have been used in \cite{lambert2020maximum,lambert2024law,peilen24} to determine the first order of the maximum of non-Gaussian log-correlated fields.

\subsection{Results on non-Gaussian multiplicative chaos}\label{Sec:BacknGMC}
While no general framework exists for non-Gaussian multiplicative chaos, we survey the relevant results in this subsection.

The simplest example of a non-Gaussian multiplicative chaos measure is given by the binary multiplicative cascade with non-Gaussian edge weights. This model produces a positive random measure $\mu$ on $[0,1]$ which can be characterized by the recursion
  \begin{equation}\label{eq:MultCascRec}
  \mu(\mathrm{d}t) \stackrel{d}{=} Z_0 \indset{[0,1/2]}(t)\mu_0(2\mathrm{d}t)+Z_1\indset{[1/2,1]}(t)\mu_1(2\mathrm{d}t-1) \,,
  \end{equation}
where $\mu_0$, $\mu_1$ are independent copies of $\mu$ and $Z_0$, $Z_1$ are nonnegative, non-Gaussian, i.i.d.\@ with mean 1. Motivated by \cref{eq:MultCascRec}, \cite{bacry2003log} studies multiplicative chaos measures for which the underlying field is given by the mass of certain cones under an infinitely divisible, independently scattered random measure. They show that, for their construction, a limiting measure exists and give a criterion for non-degeneracy and existence of moments. 
In \cite{10.1214/12-AOP810}, also motivated by 
\cref{eq:MultCascRec}, so-called $\star$-scale invariant random measures are studied, which, under mild  assumptions, are formally of the form $e^{\gamma L_x}\;\mathrm{d}x$ with $L$ a log-correlated and infinitely divisible process. They show the existence of $\star$-scale invariant random measures, characterize them (up to technical assumptions), and give a condition for non-degeneracy.

Brownian multiplicative chaos is the chaos measure one obtains by formally exponentiating the square root of the occupation field of 2D Brownian motion. This object was constructed in \cite{BBK94} in the $L^2$-regime and in \cite{AHS20,Jeg20a} in the full subcritical regime. Great progress has been made in its study and characterization since then \cite{loop-soup,jego21,jego23}. 
  
In \cite{fan2023trigonometric}, random trigonometric series of the form $\sum_n \alpha_n X_n\cos(nt+\Phi_n)$ are considered, where $\alpha_n \in \ell^4(\R)\setminus \ell^2(\R)$, $\Phi_n$ are i.i.d.\ with $\Phi_1\sim\mathrm{Unif}([0,2\pi))$, and $X_n$ are independent with $\mathbb{E}[X_n^2] = 1$ and $\mathbb{E}[X_n^{2m}] = O(K^m m!)$ for some $K>0$ and all $m\ge 2$, $n\ge 1$. They study  the associated chaos measures and give results on the Hausdorff dimension. It is worth mentioning that in the case $\alpha_n = \sqrt{2}n^{-1/2}$ and $X_n\sim\mathcal{N}(0,1)$ for all $n\in\N$, their model coincides with the random Fourier series \eqref{def:trig-field} with $a$ taken to be a Gaussian sequence, up to a scaling of space by $2\pi$. 

In \cite{SW20}, it is shown that a stochastic approximation of the logarithm of the Riemann zeta function may be decomposed into the sum of a Gaussian log-correlated field and a smooth field; using this, they show that the associated multiplicative chaos is absolutely continuous with respect to the GMC formed by the Gaussian part of the field with almost surely bounded Radon-Nikodym derivative.
We emphasize that in our case, in contrast to \cite{SW20}, the Radon-Nikodym derivative of $\mu_{\gamma,a}$ with respect to the $\mu_{\gamma,g}$ constructed in Theorem~\ref{Theo:Main} is not necessarily almost surely bounded.

A recent series of papers \cite{goro-wong-1,goro-wong-2,goro-wong-3}  constructs a non-Gaussian multiplicative chaos measure, in the  subcritical and critical regimes, associated to a Dirichlet series of coefficients coming from a certain class of random multiplicative functions restricted to the critical line. In fact, their results are robust under small perturbations away from the critical line. They do not attempt to obtain a coupling between their chaos measures and GMC measures. Relatedly, their methods do not involve Gaussian approximation.
  
Recently, \cite{2024arXiv240817219V} has shown that one can reconstruct the underlying log-correlated field from its multiplicative chaos measure (at any inverse temperature less than or equal to critical) for what they call ``mildly non-Gaussian'' fields, that is, fields of the form $X = G+H$, with $G$ a log-correlated Gaussian field and $H$ a H\"older continuous field.

The critical two-dimensional Stochastic Heat Flow (SHF) is a logarithmically-correlated process of random Borel measures on $\R^2$, constructed in \cite{CSZ23flow}, arising from directed polymers in random environment. In \cite{CSZ23gmc}, it is shown that the one-time marginal of the SHF is \textit{not} a GMC. There, the question is raised ``whether the critical 2d Stochastic Heat Flow is absolutely
continuous w.r.t.\ \textit{some} GMC.'' We do not address this question in the present article, however, after our paper was posted, a ``conditional GMC'' structure was uncovered in \cite{clark-tsai-25}.

Most relevant to this article is \cite{Junnila}, which, as mentioned above, proves for a quite general notion of log-correlated fields $(X_k(t))_{k\in\N, t\in [0,1]^d}$ on $[0,1]^d$ that, for any $K\subseteq U\subseteq \mathbb{C}$ with $K$ compact and $U\supset (0,\sqrt{2d})$ open, there is a $p>1$ for which the martingale 
  \[
  \mu_{n,\gamma}[f] := \int_{[0,1]^d} f(t) \frac{e^{\gamma\sum_{k=1}^n X_k(t)}}{\mathbb{E}[e^{\gamma\sum_{k=1}^n X_k(t)}]} \;\mathrm{d}t
  \] converges in $L^p(\Omega)$ for all $\gamma\in K$ and $f\in C([0,1]^d,\mathbb{C})$. To our knowledge, in this general case, independence of the limiting chaos measure $\mu_{\gamma}$ from the discrete approximation of $\sum_{k=1}^\infty X_k(t)$, the study of the multifractal spectrum, construction of the chaos at the critical $\gamma=\sqrt{2d}$, and more generally the question of which properties of GMC can be transfered to $\mu_{\gamma}$ are still open.
  
\subsection{Outline of the proof and structure of the paper}\label{subsec:proof-outline}
The proof of Theorem \ref{Theo:Main} begins with two major steps: a \emph{discretization step}, in which we show $\mu_{\gamma,a}$ is mutually absolutely continuous with the chaos associated to a discrete, hierarchical field (similar to a multiplicative cascade); and a \emph{coupling step}, in which we  construct a coupling between this field and a Gaussian field. 

The discretization step proceeds by considering a nested sequence of partitions of $[0,1]$ such that the $n^{\mathrm{th}}$ partition $\mathcal{P}_n$ contains approximately $2^n$ intervals (we will need slightly more, see \cref{eqn:size-Tn-Inj}). We then define the random series $\tS_{n,a}(t)$ in \cref{def:tree-field} which is a modification of the random Fourier series $S_{2^{n-1},a}(t)$. The main characteristics of $\tS_{n,a}(t)$ are that it (i) is composed of dyadically-growing chunks of $S_{2^{n-1},a}(t)$, so that the $\ell^{\mathrm{th}}$ increment of $\tS_{n,a}(t)$, $\ell \in \llbracket1,n\rrbracket$, is composed of a sum of $\approx 2^{\ell}$ terms of $S_{2^{n-1},a}$ and therefore has approximately constant variance $\log 2$ for all $\ell$; and (ii) is constant on each interval in $\mathcal{P}_n$. In light of (ii) and the fact that the $\mathcal{P}_n$ are nested, the field  $(\tS_{n,a}(t))_{t\in[0,1]}$ may be defined on an approximately binary tree; as such, the field resembles an inhomogeneous branching random walk (the major difference being that particles do not move independently after splitting). Indeed, this is the perspective we take, and we write $(\tS_{n,a}(v))_{v\in \cN_n} \equiv (\tS_{n,a}(t))_{t\in[0,1]}$, where $\cN_n$ denotes the set of particles in the $n^{\mathrm{th}}$ generation of the tree. 
Using the regularity of the eigenbasis $\{\cos(2\pi k t),\sin(2\pi k t)\}_{k\in\N}$, we show in \cref{Prop:TrigToHier} via a chaining argument that the chaos measure  $\widetilde{\mu}_{\gamma, \ta}$obtained from $(\tS_{n,a}(v))_{v\in\cN_n, n\in\N}$ by replacing $S$ by $\tS$ in \eqref{eq:PreLimGMC} is mutually absolutely continuous with respect to $\mu_{\gamma,a}$, almost surely.

The coupling step proceeds as follows. Note that by (i) above and the central limit theorem the $n^{\mathrm{th}}$ increment $\ta_{n}(v)$ of $\tS_{n,a}(v)$  becomes more and more Gaussian as $n$ increases. However, the increments $\{\ta_{n}(v)\}_{v\in \cN_{n}}$ are not independent from one another, and therefore we cannot couple them to Gaussian variables one-by-one. 
To circumvent this, we use the coupling result of Yurinskii (\cref{Lem:YrCoupSource} here, recorded from \cite{BELLONI20194} and originating from \cite{yurinskii1978error}) 
to directly couple a \emph{well-chosen subset} of $\{\tilde{a}_{n}(v)\}_{v\in\cN_n}$ as a vector to a Gaussian vector with the same covariance structure. A naive application of the Yurinskii coupling---i.e., trying to couple the entire vector of increments at level $n$ such that it is close in $L^{\infty}$-norm to a Gaussian vector---will fail because the dimension of this vector is too high. The critical insight to circumvent this issue is that we will only need to closely couple the subset of increments which are descendants of ``\emph{$\gamma$-thick points}'' of the field. Thus, we do not need to couple a vector of length $|\mathcal{P}_n| \approx 2^{n}$ but instead make do with a vector of length $\approx 2^{q_{\gamma}n}$, for some $q_{\gamma}$ that tends to $0$ as $\gamma \to \sqrt{2}$. This is accomplished in \cref{Lem:CouplThick}. Note the requirement $\gamma>1$ is introduced to ensure that the amount of random variables we need to closely couple is small enough and can be read explicitly from the right-hand side of \cref{eq:CouplingGoodOnThick}.

Ultimately, this leads to a coupled Gaussian field $\tS_g := (\tS_{n,g}(v))_{v\in\cN_n, n\in\N}$ constructed on the same tree as $(\tS_{n,a}(v))_{v\in\cN_n, n\in\N}$. \cref{Prop:HierGaussAppr} states that their associated chaos measures are mutually absolutely continuous. Up to and including this result, we have shown that $\mu_{\gamma, a}$ is absolutely continuous with respect to the GMC associated to the discrete, hierarchical Gaussian model $\tS$.
The proof of \cref{Theo:Main}, given in \cref{subsec:pf-main-theorem}, follows because the construction of the hierarchical model is in a sense ``invertible'': an associated Fourier series with i.i.d.\ $\cN(0,1)$ coefficients can be constructed from $\tS_g$.

The proof of \cref{Prop:HierGaussAppr} occupies the bulk of the paper. We construct the Radon-Nikodym derivative between these two chaos measures by studying integrability properties of the obvious candidate, defined in \eqref{def:RN-derivative}, with respect to each chaos measure. Our proof proceeds via a thick-point analysis and moment estimates using barriers. We refer to the start of \cref{sec:pf-HierGaussAppr} for a more technical outline.

In \cref{sec:discretization+coupling}, we carry out the discretization and coupling steps. In \cref{sec:pf-HierGaussAppr}, we prove \cref{Prop:HierGaussAppr}. In \cref{sec:generalizations-future-directions}, we discuss open questions raised by our work, as well as the extension to general bounded domains (see \cref{thm:general}).

\subsection{Some notation}
Below, we use $c,C>0$ to denote constants depending only on the distribution of $a_1^{(1)}$ that may change from line to line. We also say $f(n) = O_n(g(n))$ if $|f(n)| \leq Cg(n)$ for all $n\geq 1$,  and $f(n) = o_n(g(n))$ if $f/g  \to 0$ as $n\to\infty$. The set of positive integers is denoted by $\N$. For $m\le n\in\N$ we set $\llbracket m, n \rrbracket := \{m,\dots, n\}$.

\section{Discretization, Gaussian coupling, and proof of \texorpdfstring{\cref{Theo:Main}}{Theorem 1.1}}
\label{sec:discretization+coupling}
In \cref{subsec:hierarchical-model}, we define a discretization of the field \eqref{def:trig-field} on a hierarchical lattice. We then state and prove \cref{Prop:TrigToHier}, which concerns mutual absolute continuity of $\mu_{\gamma,a}$ with the  chaos measure associated to this discrete hierarchical model. In \cref{subsec:Gaussian-models}, we construct a coupling between the hierarchical model and a Gaussian hierarchical model using the Yurinskii coupling \cite{yurinskii1978error} in a version for the supremum norm proved in \cite{BELLONI20194}.
We then state \cref{Prop:HierGaussAppr}, which yields absolute continuity between the associated chaos measures. 
In \cref{subsec:pf-main-theorem}, we prove \cref{Theo:Main}.

\subsection{The discrete hierarchical model}
\label{subsec:hierarchical-model}
We begin by defining a sequence of partitions $\{\mathcal{P}_n\}_{n\geq 1}$ of $[0,1)$ in the following manner. Let $f:\N\to\R, n\mapsto n^4$ and $\mathcal{S}_1 := \{0,1\}$. Given $\mathcal{S}_n$ we define 
\[
    \mathcal{S}_{n+1} := \mathcal{S}_n\cup \{\tfrac{j}{f(n)2^n}\}_{j\in \llbracket 1, 2^nf(n) \rrbracket}.
\]
Let $(\tau_n(j))_{j\in \llbracket 1, |\mathcal{S}_n| \rrbracket}$ be the monotonically increasing enumerate of $\mathcal{S}_n$. Define $T_n := |\mathcal{S}_n|-1$,  $I_n(j) := [\tau_n(j), \tau_n(j+1))$ for $j\in \llbracket1,T_n\rrbracket$, and define
\[
  \mathcal{P}_n := \{I_n(j)\}_{j\in \llbracket1,T_n\rrbracket}\,.
\]
We emphasize that the points $\tau_n(j)$ are not equally spaced, and so the intervals $I_n(j)$ do not necessarily have the same length. At this point, it is useful to record
\begin{align}\label{eqn:size-Tn-Inj}
  T_n \leq f(n) 2^{n+1} \qquad \text{and} \qquad |I_n(j)| \leq 2^{-n}f(n)^{-1}\,.
\end{align}
For each $n\geq 1$ and $j\in \llbracket 1, T_n\rrbracket$, we choose a representative element $t_n(j)$ of $I_n(j)$ in such a way that every $2^{n-1}\times 2^{n-1}$ square submatrix of both
\begin{multline}\label{def:Cn}
  C_n^{(1)} := \Big(\frac{1}{\sqrt{k}} \cos(2\pi k t_n(j)) \Big)_{k \in \llbracket 2^{n-1}, 2^{n}-1\rrbracket ,  j \in \llbracket 1, T_n\rrbracket} \,, \qquad \text{and}\\  \qquad C_n^{(2)} := \Big(\frac{1}{\sqrt{k}} \sin(2\pi k t_n(j)) \Big)_{k \in \llbracket 2^{n-1}, 2^{n}-1\rrbracket , j \in \llbracket 1, T_n\rrbracket}
\end{multline}
is invertible. This is possible because the   determinants of the $2^{n-1}\times 2^{n-1}$ square submatrices of $C_n^{(1)}$ and $C_n^{(2)}$ are analytic functions of $(t_n(j))_{j\le T_n}$;  as such they have a zero set of Lebesgue measure $0$, which implies that we can choose the $t_n(j)$ such that all these determinants are non-zero. The exact choice of $t_n(j)$ is of no importance. 
This invertibility condition allows us to construct a Fourier series with i.i.d.\ random coefficients from a discrete hierarchical model, which leads to the construction of the $g_k^{(i)}$ appearing in \cref{Theo:Main}, see \eqref{def:iid-gaussians}.

Note that $\mathcal{P}_{n+1}$ is a refinement of $\mathcal{P}_n$ for all $n\geq 1$, and as such, the system of intervals $\{I_n(j) : j\in \llbracket 1,T_n \rrbracket\}$ (as well as their representative elements $t_n(j)$) naturally inherits a tree structure. In particular, we construct a tree $\mathfrak{T}$ satisfying, (1) the vertices in the $n^{\mathrm{th}}$ level of the tree are in bijection with  the representatives $\{t_n(j): j\in \llbracket 1, T_n \rrbracket \}$, and (2) the descendants of the vertex corresponding to the interval $I_n(j)$ are the nodes in the $(n+1)^{\mathrm{th}}$ level of the tree corresponding to the intervals in $\{I_{n+1}(j): j\in \llbracket 1, T_{n+1}\rrbracket\}$ contained in $I_n(j)$. 

The following notation makes  this bijection explicit, translating between the tree $\mathfrak{T}$ and the system of representative elements. We let $\cN_n$ denote the collection of vertices in the $n^{\mathrm{th}}$ level of $\mathfrak{T}$, so that $|\cN_n| = T_n$ and $V(\mathfrak{T}) = \cup_{n\geq 1} \cN_n$. 
For $v \in \cN_n$ corresponding to $t_n(j)$, define 
\[
  t_n(v):= t_n(j) \qquad \text{and} \qquad I_n(v) := I_n(j)\,.
\]
For $1\leq m < n$, $v \in  \cN_n$, and $w\in \cN_m$, write 
$w \prec v$ if $v$ is a descendant of $w$, and define 
\[
  t_m(v):= t_m(w) 
\]
i.e., $t_m(v)$ is the representative of the generation-$m$ parent interval corresponding to $v$.
For each $n\geq 1$, let 
\[
  \pi_n: [0,1] \to \{t_n(j)\}_{j\leq T_n}
\]
denote the map that sends $t\in I_n(j)$ to $t_n(j)$ and sends $1$ to $1$.
For each $n\geq 1$, 
let 
\[
  v_n: [0,1] \to \cN_n
\]
denote the map that sends $t \in I_n(j)$ to the vertex $v_n(t)$ in $\cN_n$ corresponding to $t_n(j)$.

Having embedded a tree structure into $[0,1]$, we now consider a hierarchical discretization of the field $(S_{n,a}(t))_{t\in [0,1]}$ by evaluating the field  on this tree.
We define the following quantities for $n\geq 1$, $k\in\llbracket 2^{n-1}, 2^n-1\rrbracket$, $t\in[0,1]$, and $v \in \cN_n$:
\begin{equation}\label{Def:Coeffs}
\begin{aligned}
\Co{k}{n}{v} &:= \frac{1}{\sqrt{k}}\cos(2\pi k t_n(v)) 
\quad \text{and} \quad
\Ct{k}{n}{v} := \frac{1}{\sqrt{k}}\sin(2\pi k t_n(v))\,,\\
 \ta_n^{(i)}(v) &:= (\Ci{k}{n}{v})_{k\in \{2^{n-1},\dots, 2^{n}-1\}} \cdot (\ai_k)_{k\in \{2^{n-1},\dots, 2^{n}-1\}}\,, \text{ for } i \in \{1,2\}\,, \text{ and} \\
 \ta_n &:= \ta_{n}^{(1)}+\ta_{n}^{(2)}.
\end{aligned}
\end{equation}
Observe that our choice of the $t_n(v)$ ensures that any $2^{n-1}\times2^{n-1}$-submatrix of $(C_{k,n}^{(i)}(v))_{k,v}$ is invertible (see below \cref{def:Cn}).
Observe further that $\ta_n$ is a vector in $\R^{T_n}$, recalling $T_n= |\cN_n|$. Denote its coordinates by $\ta_n := (\ta_{n}(v))_{v\in\cN_n}$, and note \eqref{Def:Coeffs} gives
\begin{align}\label{def:tree-field-increments}
  \ta_n(v) = \sum_{k=2^{n-1}}^{2^{n}-1} \frac{1}{\sqrt{k}}\left(\ao_k\cos(2\pi k t_n(v)) +\at_k\sin(2\pi k t_n(v))\right) = \sum_{k=2^{n-1}}^{2^{n}-1} X_k(t_n(v))\,,
\end{align}
where, for ease of notation, we define
\begin{equation}\label{eq:SummandsField}
X_k(t) := k^{-1/2} \left(\ao_k\cos(2\pi k t)+\at_k\sin(2\pi kt) \right) \qquad \text{for } t\in [0,1]\,.
\end{equation}
Though we prefer to think of our model as being defined on a tree, it will occasionally be more convenient for calculations to define
 \[
 \Ci{k}{n}{t}:= \Ci{k}{n}{v_n(t)} \qquad \text{and} \qquad  \ta_n(t) := \ta_n(v_n(t))\quad\text{for}\ t\in [0,1]\,.
 \]
 For $1\leq m <n$, $v \in \cN_n$, and $w \in \cN_m$ such that $w \prec v$,  define $\ta_m(v) := \ta_m(w)$. We then associate a random walk to each $v \in \cN_n$ (and therefore, each $t\in [0,1]$): 
 \begin{equation}\label{def:tree-field}
    \tS_{k,a}(v):= \sum_{m=1}^k \ta_m(v) \quad \text{and} \quad \tS_{k,a}(t) := \sum_{m=1}^k \ta_m(v_m(t)) \,,\quad \text{ for } 1\leq k \leq n\,.
 \end{equation}
Observe that if $t$ lies in the interval corresponding to $v\in \cN_n$, then $\tS_{k,a}(v) = \tS_{k,a}(t)$ for all $1\leq k \leq n$. Indeed, we have 
\begin{equation}\label{eqn:equivalence-of-fields}
  S_{2^n-1,a}(t_n(j)) = \tS_{n,a}(t) \,, \quad \text{for all $n\geq 1$, $j\leq T_n$, and $t\in I_n(j)$.}
\end{equation}

Finally, we are ready to define the associated chaos measure. For $n\in\N$, $\gamma\in\R$, $(a^{(i)}_k)_{k\in\N, i\in\{1,2\}}$ i.i.d.\@ satisfying \cref{eq:CondsOnak}, and $A \subset [0,1]$ Borel, we set
\begin{equation}\label{eq:PreLimHierGMC}
\widetilde{\mu}_{n,\gamma,\ta}[A] := \int_A \tZ_{n,\gamma,\ta}(t)^{-1}e^{\gamma \tS_{n,a}(v_n(t))}\;\mathrm{d}t \,,
\end{equation}
where $\tZ_{n,\gamma,\ta}(t) := \mathbb{E}[e^{\gamma \tS_{n,a}(v_n(t))}]$. In Remark \ref{rk:junnila-estimates}, we show that $\tS_{n,\ta}(t)$ is a log-correlated field so that \cite[Theorem~2]{Junnila} implies $\widetilde{\mu}_{n,\gamma,\ta}$ converges almost surely in the space of Radon measures on $[0,1]$ equipped with the topology of weak convergence to a random measure $\widetilde{\mu}_{\gamma,\ta}$, which we call the \textit{hierarchical chaos measure} corresponding to $(a_{k}^{(i)})_{k\in\N,i\in\{1,2\}}$.

\begin{remark}\label{rk:junnila-estimates}
The fields $(S_{n,a}(t))_{t\in[0,1]}$ and $(\tS_{n,a}(t))_{t\in[0,1]}$, defined in \cref{def:trig-field,def:tree-field} respectively, are \textit{log-correlated fields}. In particular, their increments $X_k(t)$ fulfill the conditions in \cite[Definition~4]{Junnila}. Indeed, that $(S_{n,a}(t))_{t\in[0,1]}$ satisfies these conditions is shown in \cite[Section~3.1]{Junnila}, and using the identity \cref{eqn:equivalence-of-fields}, it follows immediately that $(\tS_{n,a}(t))_{t\in[0,1]}$ does as well.
Therefore, we may apply the estimates developed in \cite{Junnila} for such fields to $S_{n,a}$ and $\tS_{n,a}$.
\end{remark}

 We repeatedly use the following result on the Laplace transforms of $S_{n,a}(t)$, $\widetilde{S}_{n,\widetilde{a}}(t)$:

\begin{lemma}\label{lem:junnila-lemma-7}
Fix $R>0$. For $\gamma_1, \gamma_2\in [-R,R]$  and $s,t\in [0,1]$,
\begin{align*}
&\begin{multlined}[t]\mathbb{E}[e^{\gamma_1S_{2^n-1,a}(t)+\gamma_2 S_{2^n-1,a}(s)}] \\
= e^{\frac{\gamma_1^2}{2}\Var[S_{2^n-1,a}(t)]+\frac{\gamma_2^2}{2}\Var[S_{2^n-1,a}(s)]+\gamma_1\gamma_2\Cov[S_{2^n-1,a}(t),S_{2^n-1,a}(s)]+\zeta(\gamma_1,\gamma_2,t,s)+O_n(2^{-n/2})},\end{multlined}\\
&\text{and}\\
&\begin{multlined}[t]\mathbb{E}[e^{\gamma_1\widetilde{S}_{n,\widetilde{a}}(t)+\gamma_2 \widetilde{S}_{n,\widetilde{a}}(s)}] \\
= e^{\frac{\gamma_1^2}{2}\Var[\widetilde{S}_{n,\widetilde{a}}(t)]+\frac{\gamma_2^2}{2}\Var[\widetilde{S}_{n,\widetilde{a}}(s)]+\gamma_1\gamma_2\Cov[\widetilde{S}_{n,\widetilde{a}}(t),\widetilde{S}_{n,\widetilde{a}}(s)]+\widetilde{\zeta}(\gamma_1,\gamma_2,t,s)+O_n(2^{-n/2})},\end{multlined}
\end{align*}
where the implicit constants in the $O_n$ terms as well as $\max\{|\zeta(\gamma_1,\gamma_2,t,s)|,|\widetilde{\zeta}(\widetilde{\gamma_1},\widetilde{\gamma_2},\widetilde{t},\widetilde{s})|\}$ are all bounded by a constant $C_R>0$ depending only on $R$ and the distribution of $a$.

\begin{proof}
The result essentially comes from examining the proof of \cite[Lemma~7]{Junnila}. We shall focus on proving the second equality above, as the first equality follows similarly.

As written, \cite[Lemma~7]{Junnila}\footnote{For ease of reading, let us translate the notation appearing in \cite[Lemma~7]{Junnila} and its proof to our notation. Take their $X_k$ to be our $X_k$, and take $j= n$, $t_n = 2^n$, $x = \pi_m(t)$, $y = \pi_m(s)$, $\xi_1 = \gamma_1$, and $\xi_2 =\gamma_2$.} collects the desired $\widetilde{\zeta}(\gamma_1, \gamma_2,t,s)+O_n(2^{-n/2})$ term 
 into a term simply denoted by $\epsilon$, which is shown to be bounded by a constant $C_R$ uniformly over all $n, t,s, \gamma_1$, and $\gamma_2$. Therefore, it remains to extract this quantitative decomposition From $\epsilon$.
The key estimate for this is the following: for any $p,q >0$, there exists a constant $\mathcal{E}(p+q)>0$ such that for all $t,s, \in [0,1]$ and $k \in \N$, 
\begin{align*}
  \E\Big[|X_k(t)|^p |X_k(s)|^q \Big] \leq \mathcal{E}(p+q) k^{-\frac{p+q}{2}}\,.
\end{align*}
This is immediate from \eqref{eq:CondsOnak} and \eqref{eq:SummandsField}.

Now, as in the first line of the proof of \cite[Lemma~7]{Junnila}, define 
\[
  \varphi_k(\gamma_1, \gamma_2):=  \varphi_k(\gamma_1, \gamma_2, t,s) = \E [e^{\gamma_1 X_k(\pi_m(t)) + \gamma_2 X_k(\pi_m(s))} ]
\]
where $m= \lfloor\log_2 k\rfloor+1$ and  we recall $\pi_m$ from \eqref{subsec:hierarchical-model} (we will not use the precise definition of $\pi_m$ as all bounds will be given uniformly over $t$ and $s$). We then follow the proof of \cite[Lemma~7]{Junnila} until the second-to-last display,
where we instead define 
\begin{align*}
  \varphi(\gamma_1, \gamma_2) := \varphi(\gamma_1, \gamma_2, t,s)  = \mathbb{E}[e^{\gamma_1\widetilde{S}_{n,\widetilde{a}}(t)+\gamma_2 \widetilde{S}_{n,\widetilde{a}}(s)}]
  = \prod_{m=1}^n \prod_{k=2^{m-1}}^{2^m-1} \varphi_k(\gamma_1, \gamma_2, t,s)\,.
\end{align*}
Recall $k_0$ from the first line of \cite[Page~6176]{Junnila}, which was shown to depend only on $R$. Define $m_0 := \inf \{m \in \mathbb{N} : 2^m -1 \geq k_0\}$, and write 
\[
  \varphi(\gamma_1, \gamma_2) = 
  \left(\prod_{m=1}^{m_0}  \prod_{k=2^{m-1}}^{2^m-1} \varphi_k(\gamma_1, \gamma_2,t,s) \right) 
  \left(\prod_{m=m_0+1}^{n} \prod_{k=2^{m-1}}^{2^m-1} \varphi_k(\gamma_1, \gamma_2,t,s) \right) 
\]
(this takes the place of the second-to-last display of the proof of \cite[Lemma~7]{Junnila}).
Denote the logarithm of the first product by 
\[
  \widetilde{\zeta}_1(\gamma_1,\gamma_2,t,s) := \log \left(\prod_{m=1}^{m_0}  \prod_{k=2^{m-1}}^{2^m-1} \varphi_k(\gamma_1, \gamma_2,t,s) \right)\,,
\] 
which for all $\gamma_1, \gamma_2, t$ and $s$ is bounded by a constant depending only on $R$ (due to the same being true for $m_0$ and \eqref{eq:CondsOnak}) 
and will eventually be absorbed into $\widetilde{\zeta}(\gamma_1,\gamma_2,t,s)$. Similar to the final display of \cite[Lemma~7]{Junnila}, the logarithm of the second product is
\begin{align*}
  \frac{\gamma_1^2}{2}\Var[\widetilde{S}_{n,\widetilde{a}}(t)]+\frac{\gamma_2^2}{2}\Var[\widetilde{S}_{n,\widetilde{a}}(s)]+\gamma_1\gamma_2\Cov[\widetilde{S}_{n,\widetilde{a}}(t),\widetilde{S}_{n,\widetilde{a}}(s)] \\
  +\widetilde{\zeta}_2(\gamma_1, \gamma_2,t,s)
  - \sum_{k=2^n}^{\infty} d_k O(\gamma^3+ \gamma^6)\,,
\end{align*}
where $\gamma := \max(|\gamma_1|, |\gamma_2|)$, the $O$ term is independent of $t$ and $s$, and 
\begin{align*}
  \widetilde{\zeta}_2(\gamma_1, \gamma_2,t,s)& \\
  := &-\frac{\gamma_1^2}{2}\sum_{k=1}^{2^{m_0}-1} \E[X_k(\pi_m(t))^2] 
  -\frac{\gamma_2^2}{2}\sum_{k=1}^{2^{m_0}-1} \E[X_k(\pi_m(s))^2] \\
  &-\gamma_1\gamma_2\sum_{k=1}^{2^{m_0}-1} \E[X_k(\pi_m(t))X_k(\pi_m(s))] + \sum_{k=2^{m_0}-1}^{\infty} d_k O(|\gamma|^3+ |\gamma|^6)\,.
\end{align*}
The equations leading up to \cite[Eq.(14)]{Junnila} and our moment bound on $\E[|X_k(t)|^p |X_k(s)|^q]$ in the first display of the proof show  that $|c_k| = O(k^{-3/2})$. Our moment bound and the definition of $d_k$ in \cite{Junnila} imply $|d_k| = (1+o(1))|c_k| = O(k^{-3/2})$. Therefore, we find $|\widetilde{\zeta}_2(\gamma_1,\gamma_2,t,s)|$ is bounded by a constant depending only on $R$, and 
\[
  \sum_{k=2^n}^{\infty} d_k O(\gamma^3+\gamma^6) = O(2^{-n/2})\,.
\]
Taking $\widetilde{\zeta} = \widetilde{\zeta}_1+ \widetilde{\zeta}_2$ concludes the proof.
\end{proof}
\end{lemma}
As an immediate consequence of Lemma \ref{lem:junnila-lemma-7} and the fact that both $\Var[\widetilde{S}_{n,\widetilde{a}}(t)]$, $\Var[S_{2^n-1,a}(t)]$ may be expressed as $n\log(2)+o_n(1)$ we get the following estimate on the sequences of normalizing functions in \cref{eq:PreLimGMC,eq:PreLimHierGMC}:  for each $t\in [0,1]$ and $\gamma \in (0,\sqrt2)$, there exist $\zeta(\gamma,t)$ and $\widetilde{\zeta}(\gamma,t)$ such that, for all $n\in \N$,
\begin{multline}\label{eqn:junnila-lemma-7}
  Z_{2^n-1,\gamma, a}(t) = \exp\Big(\frac{\gamma^2}2\log(2)n +\zeta(\gamma,t) +o_n(1) \Big) \\
  \text{and} \quad 
  \tZ_{n,\gamma, \ta}(t) = \exp\Big(\frac{\gamma^2}2\log(2)n +\widetilde{\zeta}(\gamma,t) +o_n(1) \Big)\,,
\end{multline}
where the $o_n(1)$ terms have no $t$ dependence since the $O_n$ terms in Lemma~\ref{lem:junnila-lemma-7} do not.

\cref{Prop:TrigToHier} below justifies the introduction of the hierarchical chaos measure. We prove the proposition via a chaining argument.

\begin{proposition}\label{Prop:TrigToHier}
Let $\gamma\in(0,\sqrt{2})$, fix $(a_k^{(i)})_{k\in\N}$ i.i.d.\@ with \cref{eq:CondsOnak} and let $(\ta_n)_{n\in\N}$ be defined as in \eqref{Def:Coeffs}. Then, almost surely,
\[
\widetilde{\mu}_{\gamma, \ta}\ll \mu_{\gamma,a} \ll \widetilde{\mu}_{\gamma,\ta}\,.
\]
\end{proposition}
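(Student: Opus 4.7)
I would identify the natural Radon--Nikodym derivative between the pre-limit measures and show it converges, uniformly in $t$, to a positive bounded limit. Concretely, by the identity \eqref{eqn:equivalence-of-fields}, for each $n$,
\[
\Phi_n(t) := \frac{d\mu_{2^n-1,\gamma,a}}{d\widetilde{\mu}_{n,\gamma,\ta}}(t) = \frac{\widetilde{Z}_{n,\gamma,\ta}(t)}{Z_{2^n-1,\gamma,a}(t)} \cdot e^{\gamma D_n(t)}, \qquad D_n(t) := S_{2^n-1,a}(t) - S_{2^n-1,a}(\pi_n(t)).
\]
From \eqref{eqn:junnila-lemma-7}, the ratio of partition functions converges as $n\to\infty$ to $\Phi(t) := \exp(\widetilde{\zeta}(\gamma,t) - \zeta(\gamma,t))$, which is bounded between positive constants (by the uniform bound $|\zeta|,|\widetilde{\zeta}|\le C$ from \cref{lem:junnila-lemma-7}) and continuous in $t$ (the $k$-th summand in the series defining $\zeta$ is continuous in $t$ and of size $O(k^{-3/2})$, giving absolute and uniform convergence). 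The uniform-in-$t$ rate of convergence of the ratio is controlled by the $O(2^{-n/2})$ error in Junnila's lemma. It therefore remains to show $\sup_{t\in[0,1]}|D_n(t)| \to 0$ a.s.

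\textbf{Chaining step --- the main work.} Using continuity of $S_{2^n-1,a}$ in $t$ and the fact that $\pi_\ell(t) \to t$ as $\ell\to\infty$ (since $|\pi_\ell(t)-t|\le 2^{-\ell}/\ell^4$), I telescope
\[
D_n(t) = \sum_{\ell\ge n}\Delta_\ell(t), \qquad \Delta_\ell(t) := S_{2^n-1,a}(\pi_{\ell+1}(t)) - S_{2^n-1,a}(\pi_\ell(t)).
\]
The choice $f(n) = n^4$ in the partition \eqref{eqn:size-Tn-Inj} is critical here: it ensures $|\pi_{\ell+1}(t)-\pi_\ell(t)| \le \delta_\ell := 2^{-\ell}/\ell^4$, and in particular $k\delta_\ell\le 1$ for all $k \le 2^n-1$ when $\ell\ge n$. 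A direct Fourier computation then gives $\Var[\Delta_\ell(t)] = 2\sum_{k=1}^{2^n-1}\frac{1-\cos(2\pi k\delta)}{k}\le C \cdot 2^{2(n-\ell)}/\ell^8$. As $t$ ranges over $[0,1]$, $\Delta_\ell(t)$ takes at most $T_{\ell+1} = O(2^\ell \ell^4)$ distinct values, each a linear combination of the $a_k^{(i)}$ with finite exponential moments by \eqref{eq:CondsOnak}. Bernstein's inequality combined with a union bound then yields $\E[\max_{v\in\cN_{\ell+1}}|\Delta_\ell(v)|] = O(2^{n-\ell}/\ell^{3.5})$. Summing the resulting geometric series over $\ell\ge n$ gives $\E[\sup_t|D_n(t)|] = O(n^{-3.5})$, so Markov's inequality and Borel--Cantelli deliver $\sup_t|D_n(t)|\to 0$ almost surely.

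\textbf{Passage to the limit.} Combining the two steps, $\Phi_n \to \Phi$ uniformly in $t$ a.s. For any $f\in C([0,1])$,
\[
\int f\, d\mu_{2^n-1,\gamma,a} = \int f\Phi_n\, d\widetilde{\mu}_{n,\gamma,\ta} = \int f\Phi\, d\widetilde{\mu}_{n,\gamma,\ta} + \int f(\Phi_n - \Phi)\, d\widetilde{\mu}_{n,\gamma,\ta}.
\]
The left side converges in probability to $\int f\, d\mu_{\gamma,a}$ by Junnila's convergence. On the right, the first term converges to $\int f\Phi\, d\widetilde{\mu}_{\gamma,\ta}$ by weak convergence of $\widetilde{\mu}_{n,\gamma,\ta}$ applied to the continuous test function $f\Phi$; the remainder is bounded by $\|f\|_\infty\|\Phi_n-\Phi\|_\infty\widetilde{\mu}_{n,\gamma,\ta}([0,1])$ and vanishes a.s.\ because $\widetilde{\mu}_{n,\gamma,\ta}([0,1])$ is a nonnegative $L^1$-bounded martingale. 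Thus $d\mu_{\gamma,a} = \Phi\, d\widetilde{\mu}_{\gamma,\ta}$ with $\Phi$ bounded between positive constants, giving mutual absolute continuity a.s.

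The hard step is the chaining bound: the $f(n)=n^4$ oversampling in the partition is precisely what makes the chained-variance sum summable in $\ell\ge n$, and this is what allows the argument to cover the entire subcritical regime $(0,\sqrt2)$ rather than just $\gamma>1$. The uniform-in-$t$ convergence and continuity of $\Phi$ are secondary technicalities that follow cleanly from \cref{lem:junnila-lemma-7} and the absolute convergence of the Laplace-transform series.
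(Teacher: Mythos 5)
Your proposal is essentially correct and takes a genuinely different chaining route from the paper. Both proofs reduce to controlling $D_n(t) := S_{2^n-1,a}(t) - S_{2^n-1,a}(\pi_n(t))$, but the decompositions differ. The paper writes $-D_n(t) = \sum_{m=1}^n Y_m(t)$ where $Y_m$ collects only the frequencies of generation $m$, exploits that $Y_m$ vanishes at the representatives $t_m(j)$, and chains each $Y_m$ over a dyadic mesh \emph{inside} each $I_m(j)$; the Borel--Cantelli event is $\sup_t|Y_m(t)|>C_0m^{-2}$, giving only the uniform-in-$n$ \emph{boundedness} $\sup_n\sup_t|D_n(t)|<\infty$ a.s., which is what their comparison argument needs. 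You instead freeze the frequency cutoff at $2^n-1$ and telescope over the \emph{partition scale} $\ell\ge n$, with each increment $\Delta_\ell$ mixing all frequencies up to $2^n-1$. Your variance estimate correctly uses that $k\delta_\ell\lesssim 1$ for $k\le 2^n-1$ and $\ell\ge n$; the resulting bound $\E[\sup_t|D_n(t)|]=O(n^{-3.5})$ is correct (the geometric tail in $\ell$ is dominated by the $\ell=n$ term, and the sub-exponential Bernstein regime is negligible for $\ell\le 2^n$ and trivially small beyond) and gives the stronger statement $\sup_t|D_n(t)|\to 0$ a.s. What each approach buys: the paper's generation-by-generation split makes the role of $f(n)=n^4$ in defeating the $T_m$-factor transparent; your split avoids the double union bound (over $m$ and over dyadic scale) at the mild cost of a more delicate variance estimate for the mixed-frequency increment.

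There is one genuine, though reparable, gap in your passage to the limit: you apply weak convergence $\widetilde{\mu}_{n,\gamma,\ta}\Rightarrow\widetilde{\mu}_{\gamma,\ta}$ to the test function $f\Phi$, asserting that $\Phi=\exp(\widetilde{\zeta}-\zeta)$ is continuous. While $\zeta$ is continuous (its summands are trigonometric and of size $O(k^{-3/2})$), $\widetilde{\zeta}$ is not: each summand in its series is piecewise constant on $\mathcal{P}_m$, and since the partitions are nested, any boundary point of $\mathcal{P}_{m_0}$ accrues jumps from every generation $m\ge m_0$, so $\widetilde{\zeta}$ has persistent jump discontinuities of order $2^{-m_0/2}$ on a countable dense set. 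Thus $f\Phi$ is not in $C([0,1])$ and you cannot invoke weak convergence directly. Two fixes: (i) observe that the discontinuity set of $\Phi$ is countable and $\widetilde{\mu}_{\gamma,\ta}$ is non-atomic (a consequence of Junnila's $L^p$-convergence for some $p>1$), so $\Phi$ is $\widetilde{\mu}_{\gamma,\ta}$-a.e.\ continuous and a portmanteau-type argument applies; or (ii) sidestep the identification of the Radon--Nikodym derivative entirely and, as the paper does, conclude from the uniform a.s.\ bounds $\mathfrak{C}^{-1}\le\Phi_n(t)\le\mathfrak{C}$ that $\mathfrak{C}^{-1}\mu_{\gamma,a}[A]\le\widetilde{\mu}_{\gamma,\ta}[A]\le\mathfrak{C}\mu_{\gamma,a}[A]$ for all Borel $A$, which already yields mutual absolute continuity. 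Option (ii) is cleaner since your chaining already gives you the needed two-sided uniform control on $\Phi_n$.
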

\begin{proof}
We use $(\Omega, \mathcal{F},\mathbb{P})$ to denote the probability space on which $(a_k^{(i)})_{k\in\N,i\in\{1,2\}}$ is defined. It suffices to show that there exists an almost surely finite random variable $\mathfrak{C}(\omega)$ such that for all $A\subset [0,1]$ Borel, we have
\[
\mathfrak{C}(\omega)^{-1}\widetilde{\mu}_{\gamma,\ta}[A] \le \mu_{\gamma,a}[A]\le \mathfrak{C}(\omega)\widetilde{\mu}_{\gamma,\ta}[A].
\]
For this we set 
\[
R_{n,\gamma}(t) := \tZ_{n,\gamma,\ta}(t)^{-1}Z_{2^n-1,\gamma,a}(t) e^{\gamma \tS_{n,a}(v_n(t)) -\gamma S_{2^n-1,a}(t)}.
\]
By the definitions in \cref{eq:PreLimGMC}, \cref{eq:PreLimHierGMC} we have that for all $A\subseteq [0,1]$ Borel, $\mathbb{P}$-a.s.,
\begin{align*}
\mu_{\gamma,a}[A] &= \lim\limits_{n\to \infty} \int_A Z_{2^n-1,\gamma}(t)^{-1} e^{\gamma S_{2^n-1,a}(t)} \d t\,,\quad \text{and}\\
  \widetilde{\mu}_{\gamma, \ta}[A] &= \lim\limits_{n\to\infty} \int_A \tZ_{n,\gamma,\ta}(t)^{-1} e^{\gamma \tS_{n,a}(v_n(t))} \d t = \lim\limits_{n\to\infty}\int_A R_{n,\gamma}(t) Z_{2^n-1,\gamma}(t)^{-1} e^{\gamma S_{2^n-1,a}(t)}\;\mathrm{d}t,
\end{align*}
so that to prove Proposition \ref{Prop:TrigToHier} it suffices to show that $ \sup_{n\in\N}\sup_{t\in[0,1]} |\log( R_{n,\gamma}(t))|$ is almost surely finite. From \cref{eqn:junnila-lemma-7}, we know that $\tZ_{n,\gamma,\ta}(t)^{-1}Z_{2^n-1,\gamma,a}(t)$ converges uniformly in $t$.
Thus, it suffices to show that, almost surely,
\begin{equation}\label{eq:SupBounded}
\sup_{n\in\N}\sup_{t\in[0,1]} | \tS_{n,a}(t) - S_{2^n-1,a}(t) |<\infty.
\end{equation}
Recalling the map $\pi_n$ from \cref{subsec:hierarchical-model}, along with \eqref{Def:Coeffs} and \cref{eq:SummandsField}, we can write
\begin{align*}
  | \tS_{n,a}(t) - S_{2^n-1,a}(t) |
  = \Bigg|\sum_{m=1}^n \sum_{k=2^m-1}^{2^m-1} X_k(t)-X_k(\pi_m(t)) \Bigg|\,.
\end{align*}
In light of Borel-Cantelli, to show \cref{eq:SupBounded}, it suffices to show the existence of $C_0>0$ such that
\begin{equation}\label{eq:GoalSupBound}
\sum_{m\in\N} \mathbb{P}\Bigg( \sup_{t\in[0,1]} \Bigg|\sum_{k=2^m+1}^{2^{m+1}} X_k(t)-X_k(\pi_m(t))\Bigg| > C_0m^{-2}\Bigg)<\infty. 
\end{equation}
We do this via a chaining argument
for the continuous random process $(Y_m(t))_{t\in[0,1]}$ given by
\begin{equation}\label{eq:DefYmt}
Y_m(t) := \sum_{k=2^m+1}^{2^{m+1}} X_k(t)-X_k(\pi_m(t)).
\end{equation}
Observe \cref{eqn:equivalence-of-fields} implies $Y_m(t_m(j)) = 0$ for all $j \in \llbracket 1,T_m \rrbracket$, and so the chaining argument proceeds by considering a dyadic mesh of points within each interval $I_m(j)$, enumerated as follows. For $\ell,m\in\N$, define 
\[
\widetilde{\mathcal{P}}_{\ell,m} := \bigcup_{j=1}^{T_m} \bigcup_{i=1}^{2^\ell} \{t_m(j)+i2^{-\ell} |I_m(j)|\},
\] 
so that $|\widetilde{\mathcal{P}}_{\ell,m}| = T_m 2^\ell$. Furthermore, let $\rho_{\ell,m}(t)$ denote the unique element $r\in \widetilde{\mathcal{P}}_{\ell,m}$ minimizing distance to $t$ and satisfying $\pi_m(t) = \pi_m(r)$ (so that $t$ and $\rho_{\ell,m}(t)$ lie in the same interval $I_m(j)$); in particular, $\rho_{0,m}(t) = \pi_m(t)$ for each $t\in[0,1]$. Since $Y_m(t)$ is continuous on $[0,1]\setminus \mathcal{P}_m$ and equal to $0$ on $\mathcal{P}_m$, we have the following for all $t\in [0,1]$:
\begin{align}\label{eq:DecompIntoIncrs}
Y_m(t) = \sum_{\ell\in \N} Y_m(\rho_{\ell,m}(t))-Y_m(\rho_{\ell-1,m}(t))\,.
\end{align}

We now record the key estimate. Consider $\ell, m\in \N$ and $s,t \in \widetilde{\mathcal{P}}_{\ell,m}$ that are consecutive, so that $|s-t| = 2^{-\ell} |I_m(j)|$ for some $j \leq T_m$ (and therefore $\pi_m(t) =\pi_m(s)$). Then, using the Lipschitz property of cosine/sine as well as \cref{eqn:size-Tn-Inj}, we find 
\begin{multline*}
  \P \bigg( |Y_m(t)-Y_m(s)| > 2\sqrt2 \pi m^{-2} \ell^{-2} \bigg)\\
 = \P \bigg( \Big| \sum_{k=2^{m-1}}^{2^m-1} X_k(t) - X_k(s) \Big| > 2\sqrt2 \pi m^{-2} \ell^{-2} \bigg)
  \leq Ce^{- c\frac{f(m) 2^{\ell}}{m^2 \ell^2}},
\end{multline*}
where we have used the exponentially-decaying right and left tails of the sum given to us by \cite[Chapter III, Theorem~16]{petrov} (in particular, Eq.~(4.3) and (4.6) there).\footnote{Note that \cite[Chapter III, Lemma~5]{petrov} allows us to apply Theorem~16 of that chapter, as condition (4.1) is satisfied.}
For each $m\in \N$, a union bound over $\ell \in \N$ and all possible pairs of $(\rho_{\ell-1,m}(t), \rho_{\ell,m}(t))$ then yields  
\begin{align*}
  \P\Big(\exists_{\ell\in\N}\exists_{t\in[0,1]} :  |Y_m(\rho_{\ell,m}(t))-Y_{m}(\rho_{\ell-1,m}(t))| > 2\sqrt2\pi m^{-2}\ell^{-2}\Big) \le C\sum_{\ell\in \N} T_m 2^{\ell} e^{- c\frac{f(m) 2^{\ell}}{m^2 \ell^2}}. 
\end{align*}
Combining the last display with \cref{eq:DecompIntoIncrs} yields 
\begin{align*}
\mathbb{P}\Big(\sup_{t\in [0,1]} |Y_m(t)|> \sqrt2\pi^3/3\cdot m^{-2}\Big) \le C\sum_{\ell\in \N} T_m 2^{\ell+1} e^{- c\frac{f(m) 2^{\ell}}{m^2 \ell^2}}\,,
\end{align*}
which, recalling the definition of $Y_m(t)$ in \cref{eq:DefYmt} as well as the bound on $T_m$ from \cref{eqn:size-Tn-Inj}, implies \cref{eq:GoalSupBound}.
This concludes the proof of \cref{Prop:TrigToHier}.
\end{proof}

\subsection{Coupling with Gaussian log-correlated fields}\label{subsec:Gaussian-models}
In addition to \cref{Prop:TrigToHier}, the other key input for the proof of \cref{Theo:Main} is 
\cref{Prop:HierGaussAppr}, which gives mutual absolute continuity of $\widetilde{\mu}_{\gamma, \ta}$ with a chaos measure associated to a discrete hierarchical Gaussian model constructed on the same probability space.
Before stating the proposition, we construct a coupling between $(\ta_n^{(i)})_{i \in \{1,2\},n\in\N}$ and a Gaussian random vector $(\tg_n^{(i)})_{i\in\{1,2\},n\in\N}$, where $\tg_n^{(i)} := (\tg_n^{(i)}(v))_{v\in\cN_n}$, such that the two vectors are close on coordinates that are  ``thick'' (\cref{Lem:CouplThick}), and 
\begin{align}\label{eqn:tg-ta-sameCov}
  \Cov[(\ta_n^{(1)},\ta_n^{(2)})_{n\in\N}] = \Cov[(\tg_n^{(1)},\tg_n^{(2)})_{n\in\N}]\,.
\end{align}
Note \cref{eqn:tg-ta-sameCov} implies ``generational independence'': $\tg_n^{(i)}$ is independent of $\tg_m^{(i)}$ for $n\neq m$.
In fact, we construct i.i.d.\ standard Gaussians $g_k^{(i)}$ such that the $\tg_n^{(i)}$ are given by taking $a_k^{(i)} := g_k^{(i)}$ in \eqref{Def:Coeffs}. These will be the $g_k^{(i)}$ appearing in the statement of \cref{Theo:Main}. The construction is completed in \eqref{def:tilde-gaussians}.
The main tool here is a variant of the Yurinskii coupling from
\cite{BELLONI20194}, reproduced below. 
\begin{lemma}[{\cite[Appendix~I, Lemma~38]{BELLONI20194}}]\label{Lem:YrCoupSource}
For any $n, K \in \N$, 
let $\xi_1,\dots, \xi_n$ be independent centered random vectors in $\R^K$ such that
\begin{equation}\label{eq:Beta}
  \beta := \sum_{k=1}^n \mathbb{E}\left[\|\xi_k\|_2^2 \, \|\xi_k\|_{\infty} \right]+\mathbb{E}\left[\|\mathfrak{g}_k\|_2^2 \, \|\mathfrak{g}_k\|_{\infty} \right]
\end{equation}
is finite, where each $\mathfrak{g}_k$ denotes an independent random vector in $\R^K$, $\mathfrak{g}_k\sim \mathcal{N}(0,\Cov(\xi_i))$. Define $S := \sum_{k=1}^n \xi_k$ and let $Z\sim \mathcal{N}(0,I_K)$. Then, for each $\delta>0$, there exists a random vector $T\sim \mathcal{N}(0,\Cov(S))$ such that
\[
  \mathbb{P}(\|S-T\|_{\infty}> 3\delta) \le \min_{t\ge 0}\Big\{2\mathbb{P}(\|Z\|_{\infty} >t)+\frac{\beta}{\delta^3}t^2\Big\}\,.
\] 
\end{lemma}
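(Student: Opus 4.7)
The plan is to follow the classical Yurinskii strategy: use Lindeberg's telescoping to compare $S$ to an auxiliary Gaussian on smooth test functions, then smooth the indicator of $\{\|\cdot\|_\infty > 3\delta\}$ by Gaussian convolution to control the $L^\infty$-tail, and finally invoke Strassen's theorem to produce an honest coupling of $S$ with a $\mathcal{N}(0,\Cov(S))$-distributed vector.

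First I would construct $\mathfrak{g}_1,\dots,\mathfrak{g}_n$ independently on an enlarged probability space and set $T_0 := \sum_{k=1}^n \mathfrak{g}_k$. Independence and the matching-covariance hypothesis $\Cov(\mathfrak{g}_k) = \Cov(\xi_k)$ give $T_0 \sim \mathcal{N}(0, \Cov(S))$, so the target marginal law is already correct and only the joint distribution needs to be adjusted. For smooth $f \colon \R^K \to \R$, the Lindeberg identity reads
\[
\mathbb{E}[f(S)] - \mathbb{E}[f(T_0)] = \sum_{k=1}^n \mathbb{E}\bigl[f(W_k + \xi_k) - f(W_k + \mathfrak{g}_k)\bigr], \quad W_k := \sum_{j<k}\xi_j + \sum_{j>k}\mathfrak{g}_j.
\]
A third-order Taylor expansion of $f$ about $W_k$ annihilates the zeroth, first, and second order terms because $\xi_k$ and $\mathfrak{g}_k$ are independent of $W_k$ and share their first two moments. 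The crucial estimate is a mixed-norm trilinear bound
\[
\bigl|D^3 f(W_k)[v,v,v]\bigr| \leq C\, \|D^3 f\|_{\ast}\, \|v\|_2^2\, \|v\|_\infty,
\]
for an operator-type norm $\|\cdot\|_\ast$ adapted to the $L^\infty$ coupling problem. Summed over $k$, this produces the quantity $\|\xi_k\|_2^2\, \|\xi_k\|_\infty$ inside $\beta$, rather than the cruder $\|\xi_k\|_2^3$ that a naive Cauchy--Schwarz would yield.

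To convert the resulting smooth-test-function estimate into a bound on $\mathbb{P}(\|S - T\|_\infty > 3\delta)$, I would add independent standard Gaussian smoothing noise of scale $\delta/t$ to both $S$ and $T_0$, decompose the event $\{\|S - T\|_\infty > 3\delta\}$ according to whether either smoothing noise exceeds threshold $t$ in $L^\infty$-norm, and treat the complementary event by applying the Lindeberg--Taylor bound from the previous paragraph to the smoothed indicator. The former decomposition yields the $2\mathbb{P}(\|Z\|_\infty > t)$ contribution, while the latter, combined with the sharp mixed-norm third-derivative bound for the Gaussian-smoothed indicator of an $\ell^\infty$-ball, produces the $\beta t^2 / \delta^3$ term. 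Strassen's theorem (in a version suited to $\ell^\infty$-metrics) then upgrades the bound on smoothed laws to an actual coupling of $S$ with a genuine $\mathcal{N}(0, \Cov(S))$-distributed $T$, and minimizing over $t$ gives the stated inequality.

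The main obstacle will be the bookkeeping in the smoothing step: isolating the operator norm $\|\cdot\|_\ast$ so that the summed Taylor remainder is exactly $\beta$, and verifying that the smoothed indicator of $\{\|\cdot\|_\infty > 3\delta\}$ has mixed-norm third derivative of order $t^2/\delta^3$ rather than a generic $t^3/\delta^3$. This calls for an integration-by-parts computation against the Gaussian smoothing kernel that exploits the coordinate-wise structure of the $\ell^\infty$-ball indicator and pairs carefully with the mixed $\|v\|_2^2\|v\|_\infty$ norm arising on the random-variable side.
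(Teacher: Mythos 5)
Note first that the paper does not prove this lemma; it is cited verbatim as \cite[Appendix~I, Lemma~38]{BELLONI20194}, and that appendix in turn follows Pollard's treatment of Yurinskii's coupling. So there is no ``paper's own proof'' against which to compare; I am therefore assessing your sketch against the known argument in the literature.

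Your architecture is the right one and matches that known argument: a Lindeberg telescope over the summands, third-order Taylor with the first two moments cancelling, a Gaussian-smoothed test function at scale $\sigma=\delta/t$, and a Strassen--Dudley step to upgrade the smoothed-law comparison to an actual coupling. The crucial mixed norm $\|v\|_2^2\,\|v\|_\infty$ that defines $\beta$ is also correctly identified.

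The genuine gap is precisely the step you flag as ``the main obstacle'' without resolving it: the claim that the smoothed indicator has mixed-norm third derivative of order $t^2/\delta^3$ rather than $t^3/\delta^3$. This is not a bookkeeping matter but the heart of the lemma, and it does not follow from a single Gaussian convolution. The exponent $t^2/\delta^3=\sigma^{-2}\delta^{-1}$ comes from a \emph{double} smoothing: first replace the box indicator by a $1$-Lipschitz (in $\ell^\infty$) function $\phi$ that transitions over a layer of width $\delta$, so that $\nabla\phi$ is coordinate-wise supported on a slab and satisfies $|\nabla\phi\cdot v|\le \delta^{-1}\|v\|_\infty$; then set $g(x)=\mathbb E[\phi(x+\sigma Z)]$ and integrate by parts \emph{once} against the Gaussian kernel, writing
\[
D^3 g(x)[v,v,v]=\sigma^{-2}\,\mathbb E\bigl[(\nabla\phi(x+\sigma Z)\cdot v)\,\bigl((Z\cdot v)^2-\|v\|_2^2\bigr)\bigr],
\]
from which the Lipschitz bound on $\nabla\phi$ and $\mathbb E[(Z\cdot v)^2]=\|v\|_2^2$ give $|D^3 g(x)[v,v,v]|\le C\,\sigma^{-2}\delta^{-1}\|v\|_2^2\|v\|_\infty$. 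Without this explicit identity, both the factor $t^2$ and the appearance of $\|v\|_\infty$ (as opposed to $\|v\|_2^3$) are unsupported assertions.

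A smaller issue is logical ordering in the final paragraph: you propose to ``decompose the event $\{\|S-T\|_\infty>3\delta\}$'' before $T$ has been constructed. The correct sequence is to first establish a smoothed-law inequality of the form $\mathbb P(S\in A)\le \mathbb P(T_0\in A^{3\delta})+2\mathbb P(\|Z\|_\infty>t)+\beta t^2/\delta^3$ for all Borel $A$ (the $3\delta$ arising from the Lipschitz layer, the smoothing noise, and the coupling slack, each contributing $\delta$), and only then invoke Strassen--Dudley to produce $T$ with the claimed tail bound on $\|S-T\|_\infty$. The intent is clear, but as written the argument is circular.
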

As a corollary of the last lemma we get the following statement.

\begin{lemma}\label{Lem:YurCoupl}
Fix $n\in\N$ and $\mathcal{K}\subseteq \mathcal{N}_n$. There exists a $\mathcal{N}(0,\Cov[(\ta_{n}^{(1)}(v),\ta_{n}^{(2)}(v))_{v\in\mathcal{K}}])$-distributed random vector $\mathfrak{g}_{\mathcal{K},n}$ taking values in $\R^{2|\mathcal{K}|}$ such that
\begin{equation}\label{eq:CouplingQuality}
\mathbb{P}\Big(\big\| \mathfrak{g}_{\mathcal{K},n}-(\ta_{n}^{(1)}(v),\ta_{n}^{(2)}(v))_{v\in\mathcal{K}} \big\|_{\infty}>n^{-2} \Big ) \leq Cn^82^{-n/2}|\mathcal{K}| \log(|\mathcal{K}|)^{3/2} \,.
\end{equation}
\end{lemma}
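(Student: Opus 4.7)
The plan is to apply the Yurinskii coupling (\cref{Lem:YrCoupSource}) to the natural decomposition of $(\ta_n^{(1)}(v),\ta_n^{(2)}(v))_{v\in\mathcal{K}}$ into a sum of independent vectors indexed by the frequency $k$, and then to estimate the quantity $\beta$ arising in \eqref{eq:Beta}.

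\textbf{Step 1 (decomposition into independent summands).} From \eqref{Def:Coeffs}, define, for each $k\in\llbracket 2^{n-1},2^n-1\rrbracket$, the centered random vector $\xi_k\in\R^{2|\mathcal{K}|}$ whose coordinates are $C_{k,n}^{(1)}(v)\ao_k$ and $C_{k,n}^{(2)}(v)\at_k$ as $v$ ranges over $\mathcal{K}$. Then
$$
  S := \sum_{k=2^{n-1}}^{2^n-1}\xi_k = (\ta_n^{(1)}(v),\ta_n^{(2)}(v))_{v\in\mathcal{K}}\,,
$$
and by the i.i.d.\ assumption on $a$ the $\xi_k$ are independent.

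\textbf{Step 2 (estimate of $\beta$).} Using the crude bounds $|C_{k,n}^{(i)}(v)|\le k^{-1/2}$, we have $\|\xi_k\|_\infty\le k^{-1/2}(|\ao_k|\vee|\at_k|)$ and
$$
  \|\xi_k\|_2^2 = \sum_{v\in\mathcal{K}}\bigl((C_{k,n}^{(1)}(v))^2(\ao_k)^2+(C_{k,n}^{(2)}(v))^2(\at_k)^2\bigr)\le k^{-1}|\mathcal{K}|\bigl((\ao_k)^2+(\at_k)^2\bigr)\,.
$$
By \eqref{eq:CondsOnak} all polynomial moments of $\ao_k,\at_k$ are finite, so $\mathbb{E}[\|\xi_k\|_2^2\|\xi_k\|_\infty]\le Ck^{-3/2}|\mathcal{K}|$. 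An identical computation handles the Gaussian summand $\mathfrak{g}_k\sim\mathcal{N}(0,\Cov(\xi_k))$ (in fact $\mathfrak{g}_k$ has the explicit form $(C_{k,n}^{(i)}(v)Z_k^{(i)})_{v,i}$ with $Z_k^{(i)}$ i.i.d.\ standard normals, so the same coordinate bounds apply). Summing the geometric tail $\sum_{k=2^{n-1}}^{2^n-1}k^{-3/2}\le C2^{-n/2}$ yields
$$
  \beta \le C|\mathcal{K}|\,2^{-n/2}\,.
$$

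\textbf{Step 3 (application of the coupling lemma and optimization).} Apply \cref{Lem:YrCoupSource} with $\delta = n^{-2}/3$, giving $\beta/\delta^3\le C n^6|\mathcal{K}|2^{-n/2}$. Using the union bound $\mathbb{P}(\|Z\|_\infty>t)\le 4|\mathcal{K}|e^{-t^2/2}$ for $Z\sim\mathcal{N}(0,I_{2|\mathcal{K}|})$, we obtain
$$
  \mathbb{P}\bigl(\|S-\mathfrak{g}_{\mathcal{K},n}\|_\infty>n^{-2}\bigr)\ \le\ \min_{t\ge 0}\Bigl\{8|\mathcal{K}|e^{-t^2/2}+Cn^6|\mathcal{K}|\,2^{-n/2}\,t^2\Bigr\}\,.
$$
Choosing $t^2 = 2\log(n^2|\mathcal{K}|^{3/2})$ (for instance) makes the first term $O(n^{-4}|\mathcal{K}|^{-1/2})$ while the second term becomes at most $Cn^7|\mathcal{K}|\log(|\mathcal{K}|)2^{-n/2}$, and in particular both are bounded by $Cn^8|\mathcal{K}|\log(|\mathcal{K}|)^{3/2}2^{-n/2}$, which is \eqref{eq:CouplingQuality}.

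\textbf{Main obstacle.} The proof is essentially an exercise in bookkeeping; the only non-trivial point is to check that $\beta$ really decays like $2^{-n/2}$. This decay is exactly what the $k^{-1/2}$ normalization of the Fourier coefficients in \eqref{def:trig-field} provides (it is the same decay that drives the $d_k\le O_k(k^{-3/2})$ estimate mentioned in the footnote of \cref{rk:junnila-estimates}), and it is what makes the coupling useful when $|\mathcal{K}|$ is as large as $2^{qn}$ with $q<1/2$, as required in the thick-point application of \cref{Lem:CouplThick}.
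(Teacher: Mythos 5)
Your Steps 1 and 2 are correct. The decomposition into independent summands indexed by frequency is exactly what the paper does. Your treatment of the Gaussian contribution to $\beta$ is actually a small improvement over the paper's: by exploiting that $\xi_k$ (and hence $\mathfrak{g}_k$) has rank $2$ as a function of the two scalars $a_k^{(1)},a_k^{(2)}$, you avoid the extra $\log(K)^{1/2}$ factor that the paper picks up from the generic bound \cite[Lemma~B.3]{new-yurinskii}. This is correct, and harmless since the factor is absorbed anyway.

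However, Step 3 contains a genuine error. After the substitution $\delta = n^{-2}/3$ you must choose $t$ in
\[
\min_{t\ge 0}\Big\{2\,\mathbb{P}(\|Z\|_\infty>t)+\frac{\beta}{\delta^3}t^2\Big\}
\]
so that \emph{both} terms are bounded by the right-hand side of \eqref{eq:CouplingQuality}, which decays like $2^{-n/2}$ in $n$. Your choice $t^2 = 2\log\bigl(n^2|\mathcal{K}|^{3/2}\bigr)$ makes the Gaussian tail term
\[
8|\mathcal{K}|e^{-t^2/2} = 8\,n^{-2}|\mathcal{K}|^{-1/2}
\]
(note: not $n^{-4}$), which decays only polynomially in $n$. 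If, say, $|\mathcal{K}|$ is bounded (the lemma is stated for arbitrary $\mathcal{K}\subseteq\mathcal{N}_n$), this is $\Theta(n^{-2})$, which is \emph{not} bounded by $Cn^8 2^{-n/2}|\mathcal{K}|\log(|\mathcal{K}|)^{3/2}$ for large $n$. The paper instead takes $t$ of order $n$ (specifically $t = C\log(K)^{1/2}+n$), making the first term super-exponentially small ($\approx e^{-n^2/2}$) at the cost of an extra factor $t^2 = O(n^2)$ in the second term, which yields the claimed $n^8$. To fix your proof, take $t \gtrsim n^{1/2}$ or $t\sim n$; for instance $t^2 = n\log 2 + 2\log|\mathcal{K}|$ would make the first term $O(|\mathcal{K}|^{-1}2^{-n/2})$ and the second $O\bigl(n^6(n+\log|\mathcal{K}|)|\mathcal{K}|2^{-n/2}\bigr)$, both within the stated bound.

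Your closing ``main obstacle'' remark correctly identifies the $2^{-n/2}$ decay of $\beta$ as the crucial point, but the optimization over $t$ is not mere bookkeeping: picking $t$ to balance an exponential-in-$t^2$ tail against a polynomial-in-$t$ cost requires $t$ to grow with $n$, not merely logarithmically.
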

\begin{proof}
For notational brevity, we  define $N = 2^{n-1}$ and $l_n(k) = 2^{n-1}-1+k$ for $k\in \llbracket 1, N\rrbracket $. 
We seek to apply Lemma~\ref{Lem:YrCoupSource} with $\delta:= n^{-2}/3$, $K := 2|\mathcal{K}|$,  and
\[
   \xi_k := \left(\frac{a_{l_n(k)}^{(1)}}{\sqrt{l_n(k)}}\cos(2\pi  l_n(k) t_n(v)) \,,\, \frac{a_{l_n(k)}^{(2)}}{\sqrt{l_n(k)}}\sin(2\pi l_n(k) t_n(v))\right)_{v\in \mathcal{K}} \,,
\]
so that $\sum_{k\in \llbracket 1, N\rrbracket } \xi_k = (\ta_n^{(1)}(v),\ta_n^{(2)}(v))_{v\in\mathcal{K}}$.
Further, since $(a_k^{(i)})_{i\in\{1,2\},k\in\N}$ are i.i.d.\ with mean zero, the sequence $(\xi_k)_{k\in\N}$ is independent with mean zero.  Thus, Lemma~\ref{Lem:YrCoupSource} yields a random vector $\mathfrak{g}_{\mathcal{K},n} \sim \mathcal{N}(0,\Cov[(\ta_{n}^{(1)}(v),\ta_{n}^{(2)}(v))_{v\in\mathcal{K}}])$ such that
\begin{equation}\label{eq:ApplYurAsState}
\mathbb{P}\Big( \big\|\mathfrak{g}_{\mathcal{K},n} - (\ta_n^{(1)}(v),\ta_n^{(2)}(v))_{v\in\mathcal{K}}\big\|_{\infty} >n^{-2}\Big) \le \min_{t\ge 0}\Big\{2\mathbb{P}\Big(\|Z\|_{\infty}>t \Big)+ Cn^6\beta t^2\Big\},
\end{equation}
with $Z = (Z_j)_{j\le K}$ i.i.d.\@, $\mathcal{N}(0,1)$-distributed and $\beta$ as in \cref{eq:Beta}. To control the $\xi_k$ part of $\beta$, we use that $\|\xi_k\|_2^2\le  K\|\xi_k\|_{\infty}^2$ and
$
\|\xi_k\|_{\infty} \le N^{-1/2} \max\{|a_{l_n(k)}^{(1)}|, |a_{l_n(k)}^{(2)}|\}
$
to get
\begin{equation}\label{eq:betaXiPart}
\sum_{k=1}^N \mathbb{E}[\|\xi_k\|_2^2\, \|\xi_k\|_{\infty}] \le \mathbb{E}[\max\{|a_1^{(1)}|,|a_1^{(2)}|\}]\cdot N^{-1/2}K.
\end{equation}
The $\mathfrak{g}_k$ part of $\beta$ is bounded as $CN^{-1/2}K\log(K)^{1/2}$ by \cite[Lemma~B.3]{new-yurinskii}, so that 
\[
\beta \le CN^{-1/2}K\log(K)^{1/2}.
\]
Furthermore, using Borel-TIS and $\mathbb{E}\left[ \|Z \|_{\infty}\right] \le C\log(K)^{1/2}$, we get for $t\ge C\log(K)^{1/2}$
\[
\mathbb{P}\big(\|Z\|_{\infty}>t \big) \le C e^{-\frac{(t-C\log(K)^{1/2})^2}{2}}
\]
where we recall the constant $C$ can differ from instance to instance.
Plugging this into \cref{eq:ApplYurAsState} bounds the left-hand side of \cref{eq:ApplYurAsState} by
\begin{align*}
  C\min_{t\ge C\log(K)^{1/2}}\Big\{e^{-\frac{(t-C\log(K)^{1/2})^2}{2}}+n^6t^2N^{-\frac12}K\log(K)^{\frac12}\Big\}.
\end{align*}
Taking $t = C\log(K)^{1/2}+n$ and recalling  $N^{-1/2} = 2^{-(n-1)/2}$ and $K = 2|\mathcal{K}|$ yields \cref{eq:CouplingQuality}.
\end{proof}

We now use Lemma~\ref{Lem:YurCoupl} to recursively construct the Gaussians  $(\tg_n^{(i)})_{i \in \{1,2\},n\in\N}$.
As a first step we set $(\tg_{1}^{(i)})_{i\in\{1,2\}} := \mathfrak{g}_{\cN_1,1}$ from Lemma \ref{Lem:YurCoupl}.

Assume we have constructed $(\tg_k^{(i)})_{i \in \{1,2\},k\le n}$. We construct the $(n+1)^{\mathrm{st}}$ generation as follows. First, we  only apply the coupling on points satisfying some notion of thickness ``up to level $n$'', thereby
reducing the quantity of random variables we need to couple. 
Towards this, fix $\delta>0$ satisfying $\gamma-\delta>1$, and define the following sets of \textit{time-$n$ thick points}:
\begin{align}
\mathcal{T}_{n,\gamma,\ta} &:= \left\{ v\in\mathcal{N}_n :  \tS_{n,\ta}(v)  \ge \log(2)(\gamma-\delta)n \right\}\,, \nonumber\\
\mathcal{T}_{n,\gamma,\tg} &:= \left\{v\in \mathcal{N}_n :  \tS_{n,\tg}(v) \ge \log(2)(\gamma-\delta)n \right\}\,, \nonumber \\
\mathcal{T}_{n,\gamma} &:= \mathcal{T}_{n,\gamma,\ta}\cup\mathcal{T}_{n,\gamma,\tg}\,. \label{def:time-n-thick}
\end{align}
Next, define the random set of direct descendants of particles in $\mathcal{T}_{n,\gamma}$:
\[
\mathcal{N}_{n+1}(\mathcal{T}_{n,\gamma}) := \{v\in \mathcal{N}_{n+1} : \exists w \in \mathcal{T}_{n,\gamma} \text{ such that } w \prec v \} \,.
\] 
Our coupling takes place on the random variables associated with $\mathcal{N}_{n+1}(\mathcal{T}_{n,\gamma})$, shown next.

\begin{lemma} \label{Lem:CouplThick}
There is a Gaussian random vector $(\tg_{n}^{(1)}(v),\tg_{n}^{(2)}(v))_{v\in\mathcal{N}_{n+1}(\mathcal{T}_{n,\gamma})}$ 
such that 
\[
    \Cov\big[(\tg_{n+1}^{(1)}(v),\tg_{n+1}^{(2)}(v))_{v\in\mathcal{N}_{n+1}(\mathcal{T}_{n,\gamma})}\big] = \Cov\big[(\ta_{n+1}^{(1)}(v),\ta_{n+1}^{(2)}(v))_{v\in\mathcal{N}_{n+1}(\mathcal{T}_{n,\gamma})}\big]
\]
and for all $\delta>0$
\begin{equation} \label{eq:CouplingGoodOnThick}
\mathbb{P}\Big(\sup_{v\in\mathcal{N}_{n+1}(\mathcal{T}_{n,\gamma}), i\in\{1,2\}} \Big|\tg_{n+1}^{(i)}(v)-\ta_{n+1}^{(i)}(v)\Big| > n^{-2} \Big) \le C2^{-\frac{(\gamma-\delta)^2-1}{4+\delta}}n\,.
\end{equation}
\end{lemma}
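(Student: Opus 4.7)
The strategy is to apply Lemma \ref{Lem:YurCoupl} at level $n+1$ conditionally on the first $n$ generations of the construction, and then bound the resulting conditional failure probability in expectation via a first-moment estimate on time-$n$ thick points.

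Let $\mathcal{F}_n:=\sigma((a_k^{(i)})_{i\in\{1,2\},\,k<2^n})$. The vectors $\tg_1,\dots,\tg_n$ are $\mathcal{F}_n$-measurable by their recursive construction, and so are the thick-point sets and hence $\mathcal{K}:=\mathcal{N}_{n+1}(\mathcal{T}_{n,\gamma})$. On the other hand, $(\ta_{n+1}^{(i)})_{i\in\{1,2\}}$ depends only on the fresh coefficients $(a_k^{(i)})_{k\in\llbracket 2^n, 2^{n+1}-1\rrbracket}$ and is independent of $\mathcal{F}_n$. Conditioning on $\mathcal{F}_n$ therefore fixes $\mathcal{K}$, so Lemma \ref{Lem:YurCoupl} applied at level $n+1$ produces a Gaussian vector $(\tg_{n+1}^{(i)}(v))_{v\in\mathcal{K},\,i\in\{1,2\}}$ with covariance matching that of $(\ta_{n+1}^{(i)}(v))_{v\in\mathcal{K},\,i\in\{1,2\}}$ and such that
\[
\P\big(\|\tg_{n+1}-\ta_{n+1}\|_\infty>n^{-2}\,\big|\,\mathcal{F}_n\big)\ \le\ C(n+1)^8\,2^{-(n+1)/2}\,|\mathcal{K}|\,\log(|\mathcal{K}|)^{3/2},
\]
using $n^{-2}\ge(n+1)^{-2}$ to apply Lemma \ref{Lem:YurCoupl}'s threshold.

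It remains to estimate $\E[|\mathcal{K}|\log(|\mathcal{K}|)^{3/2}]$. The spacing of the points added in $\mathcal{S}_{n+1}\setminus\mathcal{S}_n$ equals $2^{-n}n^{-4}$, which matches the maximum length of $I_n(v)$ allowed by \eqref{eqn:size-Tn-Inj}; hence each vertex in $\mathcal{N}_n$ has only $O(1)$ children in $\mathcal{N}_{n+1}$, so $|\mathcal{K}|\le C|\mathcal{T}_{n,\gamma}|$ while $|\mathcal{K}|\le|\mathcal{N}_{n+1}|\le Cn^4 2^n$ deterministically (whence $\log(|\mathcal{K}|)^{3/2}\le Cn^{3/2}$). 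Applying Chernoff's inequality with exponent $\lambda=\gamma-\delta$ to each $v\in\mathcal{N}_n$, using Lemma \ref{lem:junnila-lemma-7} for the exponential moments of $\tS_{n,\ta}(v)$ (and the analogous Gaussian computation for $\tS_{n,\tg}(v)$), gives
\[
\P\big(\tS_{n,\ta}(v)\ge\log(2)(\gamma-\delta)n\big)\ \le\ e^{-\lambda\log(2)(\gamma-\delta)n}\,\E[e^{\lambda\tS_{n,\ta}(v)}]\ \le\ C\,2^{-(\gamma-\delta)^2 n/2},
\]
and summing over $v\in\mathcal{N}_n$ yields $\E[|\mathcal{T}_{n,\gamma}|]\le Cn^4\, 2^{n(1-(\gamma-\delta)^2/2)}$.

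Taking expectations in the conditional bound and combining with the above gives
\[
\P\big(\|\tg_{n+1}-\ta_{n+1}\|_\infty>n^{-2}\big)\ \le\ Cn^{27/2}\,2^{-n((\gamma-\delta)^2-1)/2}.
\]
Since $\gamma-\delta>1$, the exponent is strictly negative and the polynomial prefactor is absorbed by weakening the rate from $((\gamma-\delta)^2-1)/2$ to $((\gamma-\delta)^2-1)/5$ (and enlarging $C$ to cover finitely many small $n$), yielding \eqref{eq:CouplingGoodOnThick}. The delicate point, and the reason the coupling is restricted to descendants of thick points, is precisely this trade-off: coupling all $\approx 2^{n+1}$ vertices in $\mathcal{N}_{n+1}$ would make the Yurinskii error blow up as $n^8\,2^{n/2}$, whereas restricting to $\mathcal{N}_{n+1}(\mathcal{T}_{n,\gamma})$ contributes the saving factor $2^{-(\gamma-\delta)^2 n/2}$; the hypothesis $\gamma-\delta>1$ is exactly what makes $-1/2+1-(\gamma-\delta)^2/2<0$, so the product decays exponentially in $n$.
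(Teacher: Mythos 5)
Your proof is correct and follows essentially the same approach as the paper's: use independence of $\mathcal{N}_{n+1}(\mathcal{T}_{n,\gamma})$ from the level-$(n+1)$ increments to apply Lemma~\ref{Lem:YurCoupl} conditionally, then control the size of the thick-point set via a first-moment (Chernoff) estimate using Lemma~\ref{lem:junnila-lemma-7}. The only cosmetic difference is that you take expectations of the conditional Yurinskii bound directly (using the deterministic bound $\log(|\mathcal{K}|)^{3/2}\le Cn^{3/2}$), whereas the paper splits on the high-probability event $\{|\mathcal{N}_{n+1}(\mathcal{T}_{n,\gamma})|\le|\mathcal{N}_{n+1}|2^{-\frac{1+(\gamma-\delta)^2}{4}n}\}$ and bounds the complement by Markov's inequality applied to the same first moment; both maneuvers yield a rate strictly better than $\frac{(\gamma-\delta)^2-1}{5}$, which then absorbs the polynomial prefactor.
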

\begin{proof}
Observe that by construction $\mathcal{N}_{n+1}(\mathcal{T}_{n,\gamma})$ is independent of $(\ta_{n+1}^{(i)})_{i\in\{1,2\}}$. 
As a consequence, we may apply \cref{Lem:YurCoupl} to construct a Gaussian random vector $(\tg_{n+1}^{(1)}(v),\tg_{n+1}^{(2)}(v))_{v\in\mathcal{N}_{n+1}(\mathcal{T}_{n,\gamma})}$ with 
\[
\Cov[(\tg_{n+1}^{(1)}(v),\tg_{n+1}^{(2)}(v))_{v\in\mathcal{N}_{n+1}(\mathcal{T}_{n,\gamma})}] = \Cov[(\ta_{n+1}^{(1)}(v),\ta_{n+1}^{(2)}(v))_{k\in\mathcal{N}_{n+1}(\mathcal{T}_{n,\gamma})}]
\] 
and
\begin{multline}\label{eq:CondCouplContr}
    \mathbb{P}\Big(\sup_{v\in\mathcal{N}_{n+1}(\mathcal{T}_{n,\gamma}), i\in\{1,2\}} \big|\tg_{n+1}^{(i)}(v)-\ta_{n+1}^{(i)}(v)\big| > n^{-2} \given \mathcal{N}_{n+1}(\mathcal{T}_{n,\gamma})\Big) \\
    \leq C \big(n^8 2^{-n/2} |\mathcal{N}_{n+1}(\mathcal{T}_{n,\gamma})| \log\left(|\mathcal{N}_{n+1}(\mathcal{T}_{n,\gamma})|\right)^{3/2} \big).
\end{multline}
On the event $|\mathcal{N}_{n+1}(\mathcal{T}_{n,\gamma})| \le |\mathcal{N}_{n+1}| \, 2^{-\frac{1+(\gamma-\delta)^2}{4}n}$, the right-hand side of \cref{eq:CondCouplContr} is smaller than the right-hand side in \cref{eq:CouplingGoodOnThick}, where we made use of $|\mathcal{N}_{n+1}| \le (n+1)^{4} 2^{n+2}$. Furthermore, a first moment calculation yields
\begin{equation}\label{eq:FewThickPoints}
\mathbb{P}\Big( \big|\mathcal{N}_{n+1}(\mathcal{T}_{n,\gamma})\big| > |\mathcal{N}_{n+1}| 2^{-\frac{1+(\gamma-\delta)^2}{4}n} \Big) \leq C2^{-\frac{(\gamma-\delta)^2-1}{4}n} \,.
\end{equation}
We combine \cref{eq:CondCouplContr} and \cref{eq:FewThickPoints} to get \cref{eq:CouplingGoodOnThick}.
\end{proof}

Having constructed the Gaussian vector $(\tg_n^{(i)}(v))_{v\in \cN_n(\cT_{n-1,\gamma})}$ for each $n\in \N$ and $i\in \{1,2\}$, we now extend this vector to $(\tg_n^{(i)}(v))_{v\in \cN_n}$ in such a way that \cref{eqn:tg-ta-sameCov} is satisfied.
Using Borel-Cantelli and a first moment bound on the right tail of $|\mathcal{N}_n(\mathcal{T}_{n-1,\gamma})|$ there is a random $n_0(\omega)$ such that for all $n\ge n_0(\omega)$ we can choose $\overline{\cN_n}$ such that $|\overline{\cN_n}| =  2^{n-1}$ and
\[
  \cN_n(\cT_{n-1,\gamma}) \subset \overline{\cN_n} \subset \cN_n\,.
\]
First, we describe our coupling for $n\le n_0(\omega)-1$. In this case, we take an independent coupling, i.e., given $n_0(\omega)$ we take $(g_k^{(1)},g_k^{(2)})_{k\le 2^{n(\omega)-1}-1}$ to be an i.i.d.\@ sequence of $\mathcal{N}(0,1)$ distributed random variables and define $((\tg_n^{(i)}(v))_{v\in \cN_n}))_{n\le n_0(\omega)-1}$ analogously to \cref{Def:Coeffs}.

Next, we give our coupling for $n\ge n_0(\omega)$. In this case, we extend $(\tg_n^{(i)}(v))_{v\in \cN_n(\cT_{n-1,\gamma})}$ to a Gaussian vector $(\tg_n^{(i)}(v))_{v\in \overline{\cN_n}}$ such that 
\[
  \Cov\big[\tg_n^{(i)}(v)\big]_{v\in\overline{\cN_n}} = \Cov\big[\ta_n^{(i)}(v)\big]_{v\in\overline{\cN_n}}\,.
\]
Define
\begin{align}\label{def:iid-gaussians}
  \big(g_k^{(i)}\big)_{k \in \{2^{n-1},\dots,2^n-1\}}
  := \big(\Ci{k}{n}{v}\big)^{-1}_{k \in \{2^{n-1}, \dots 2^n-1\},v\in\overline{\cN_n}} \big(\tg_n^{(i)}(v)\big)_{v\in\overline{\cN_n}} 
  \,,
\end{align}
and note that the $g_k$ are i.i.d.\ $\cN(0,1)$ random variables (invertibility is guaranteed by our choice of the $t_n(v)$, see \eqref{def:Cn}, and the $g_k$ have identity covariance matrix due to \eqref{Def:Coeffs} and the $a_k$ being i.i.d.\ with variance $1$).
Finally, define 
\begin{align}\label{def:tilde-gaussians}
  \big(\tg_n^{(i)}(v)\big)_{v\in\cN_n} 
  = \big(\Ci{k}{n}{v}\big)_{k \in \{2^{n-1}, \dots 2^n-1\},v\in\cN_n} 
  \big(g_k^{(i)}\big)_{k \in \{2^{n-1},\dots,2^n-1\}}\,.
\end{align}
We see from \cref{Def:Coeffs} that \cref{eqn:tg-ta-sameCov} is satisfied. We also note that the $g_k^{(i)}$ constructed in \cref{def:iid-gaussians} are the ones appearing in the statement of \cref{Theo:Main}.

As mentioned at the start of the subsection, the relationship between the $g_k^{(i)}$ and the $\tg_n^{(i)}$ is the same as that of the $a_k^{(i)}$ and the $\ta_n^{(i)}$, given by \cref{Def:Coeffs}. As such, our construction fits in the framework of \cref{subsec:hierarchical-model}. By taking $a_k^{(i)} = g_k^{(i)}$ in \cref{Def:Coeffs}, we may define the field $\tS_{k,g}(v)$ as in \cref{def:tree-field}, as well as the limiting chaos measure  $\widetilde{\mu}_{\gamma, \tg}$ as in \cref{eq:PreLimHierGMC}.

\begin{proposition}\label{Prop:HierGaussAppr}
Let $\gamma\in (1,\sqrt{2})$, fix $(a_k^{(i)})_{i \in \{1,2\},k\in\N}$ i.i.d\@ satisfying \cref{eq:CondsOnak}, and let $(\ta_n^{(i)})_{i \in \{1,2\},n\in\N}$ be defined as in \eqref{Def:Coeffs}. Construct $(\tg_n^{(i)}(v))_{i \in \{1,2\},v\in \cN_n}$ as in \cref{def:tilde-gaussians}. Then,
\begin{equation}\label{eq:MutAbsContHierMod}
\widetilde{\mu}_{\gamma,\ta} \ll \widetilde{\mu}_{\gamma, \tg} \ll \widetilde{\mu}_{\gamma,\ta} \qquad \text{almost surely.}
\end{equation}
\end{proposition}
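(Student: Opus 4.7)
The plan is to construct the Radon--Nikodym derivative $d\widetilde{\mu}_{\gamma,\ta}/d\widetilde{\mu}_{\gamma,\tg}$ as the almost-sure pointwise limit of the natural pre-limit density
\[
D_n(t) := \frac{\tZ_{n,\gamma,\tg}(t)}{\tZ_{n,\gamma,\ta}(t)} \exp\bigl(\gamma\,[\tS_{n,a}(v_n(t)) - \tS_{n,g}(v_n(t))]\bigr),
\]
so that $\widetilde{\mu}_{n,\gamma,\ta}(dt) = D_n(t)\, \widetilde{\mu}_{n,\gamma,\tg}(dt)$ on the nose, by \eqref{eq:PreLimHierGMC}. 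By \eqref{eqn:junnila-lemma-7}, the partition-function prefactor converges uniformly in $t$ to a strictly positive continuous function, so the behavior of $D_n$ is entirely governed by the telescoping exponent $\gamma\sum_{m=1}^n (\ta_m(v_n(t)) - \tg_m(v_n(t)))$.

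For pointwise convergence, I would fix $\delta>0$ small enough that $\gamma-\delta>1$; this is precisely where the hypothesis $\gamma>1$ is used. The tail bound \eqref{eq:CouplingGoodOnThick} is summable in $n$ because the exponent $(\gamma-\delta)^2-1$ is positive, so Borel--Cantelli yields that, almost surely, for all but finitely many $n$, every descendant of a time-$n$ thick point satisfies $|\tg_{n+1}^{(i)}(v) - \ta_{n+1}^{(i)}(v)| \leq n^{-2}$. Since $\sum_m m^{-2}<\infty$, whenever $t\in[0,1]$ satisfies $v_n(t)\in \mathcal{T}_{n,\gamma}$ for all sufficiently large $n$, the series $\sum_m(\ta_m(v_n(t))-\tg_m(v_n(t)))$ converges absolutely and $D_n(t)$ converges to a strictly positive, finite limit $D_\infty(t)$.

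The second step, which carries the bulk of the work, is to show that both chaos measures concentrate on such \emph{eventually thick} points and to upgrade the pre-limit identity to $\widetilde{\mu}_{\gamma,\ta}(A)=\int_A D_\infty\, d\widetilde{\mu}_{\gamma,\tg}$ for Borel $A$. I would introduce a barrier truncation: for parameters $k_0\le n$, restrict attention to the set $G_{k_0,n}$ of $t$ with $v_m(t)\in \mathcal{T}_{m,\gamma}$ for all $k_0\le m\le n$, and control $\int D_n\, \one_{G_{k_0,n}^c}\, d\widetilde{\mu}_{n,\gamma,\tg}$ using first-moment/barrier estimates drawn from \eqref{eqn:junnila-lemma-7}, exploiting that the walk $(\tS_{m,\tg}(v_n(t)))_{m\le n}$ biased by $D_n\, \widetilde{\mu}_{n,\gamma,\tg}$ must remain close to slope $\gamma\log 2$. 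Running the same scheme with $D_n^{-1}$ in place of $D_n$ and swapping the roles of $\ta$ and $\tg$ delivers the reverse direction $\widetilde{\mu}_{\gamma,\tg}\ll \widetilde{\mu}_{\gamma,\ta}$.

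I expect the main obstacle to be the uniform-integrability-type estimate required to pass from the pre-limit identity to the limit identity in the $L^1$-regime $\gamma\in(1,\sqrt{2})$, where the chaos has no finite second moment and the standard $L^2$-based uniform-integrability toolbox is unavailable. The substitute should be a combination of (i) the sharp coupling tail \eqref{eq:CouplingGoodOnThick}, whose exponent becomes summable exactly at $\gamma>1$, and (ii) sharp barrier/path estimates along $(\tS_{m,\cdot}(v_n(t)))_{m\le n}$ that discard points whose walk deviates from slope $\gamma\log 2$ at any intermediate scale, quantified finely enough that the excluded mass is already negligible at level $k_0$ uniformly in $n$.
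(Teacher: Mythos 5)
Your proposal is on the right track and shares the paper's basic architecture, but it is vague in exactly the place where the bulk of the paper's work lives, and the framing obscures a subtle point.

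Your step (1) — pointwise convergence of the density along eventually-thick points, via the Yurinskii coupling tail \eqref{eq:CouplingGoodOnThick}, Borel--Cantelli, and the summability of $m^{-2}$, using $\gamma>1$ precisely to make the exponent $(\gamma-\delta)^2-1$ positive — matches the paper's step (A) (together with Lemma~\ref{Lem:SupportThick}, which says the chaoses concentrate on thick points). This part is fine.

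The issue is step (2). You frame it as ``upgrade the pre-limit identity $\widetilde{\mu}_{n,\gamma,\ta}(dt)=D_n(t)\,\widetilde{\mu}_{n,\gamma,\tg}(dt)$ to the limit identity,'' and you propose to control $\int D_n\,\one_{G_{k_0,n}^c}\,d\widetilde{\mu}_{n,\gamma,\tg}$ by ``first-moment/barrier estimates.'' But passing this finite-$n$ identity to the limit requires \emph{joint} convergence of the density $D_n$ and the pre-limit measure $\widetilde{\mu}_{n,\gamma,\tg}$, and this is delicate: you cannot treat $\widetilde{\mu}_{n,\gamma,\tg}\to\widetilde{\mu}_{\gamma,\tg}$ as fixed while $D_n\to D_\infty$ varies, because the two are highly correlated (indeed, $D_n$ is a functional of the fields up to level $n$, and $\widetilde{\mu}_{n,\gamma,\tg}$ is built from exactly the same increments). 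The paper sidesteps this by instead considering the pre-limit density integrated against the \emph{limit} measure, i.e.\ the quantity $\int_A\widetilde{R}_{n,\gamma}(t)\,d\widetilde{\mu}_{\gamma,\ta}(t)$, and showing this converges to $\widetilde{\mu}_{\gamma,\tg}[A]$ (this is item~(B), Equations~\eqref{eq:IntegralLimit}--\eqref{eq:IntegralLimit2}). This choice is the crux: after the substitution, the integral decomposes over $\cN_n$ as $\sum_w |I_n(w)|\,M_{n,\ta}(w)\,e^{\gamma\tS_{n,\tg}(w)}\tZ_{n,\gamma,\tg}(w)^{-1}$, where $M_{n,\ta}(w)$ is the normalized limit chaos mass below $w$; by generational independence, $(M_{n,\ta}(w))_w$ is independent of the level-$\le n$ Gaussian data, and each $M_{n,\ta}(w)-1$ is centered. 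So the comparison with $\widetilde{\mu}_{\gamma,\tg}[A]$ becomes a law-of-large-numbers statement \eqref{eq:GoalIntegralConvergence}. Showing this requires more than first-moment/barrier estimates: the main term (barrier-truncated on both sides of level $n$, cf.\ \eqref{eq:L2PartConv}) is controlled by a genuine \emph{second-moment} computation (Lemmas~\ref{Lem:SecondMoment} and~\ref{Lem:CovarTerm}), with barriers on $\tS_{s,\tg}(w)$ for $s\le n$ and on the future field $\tS_{(n,n+r],\ta}$; only the barrier-violation terms \eqref{eq:L2PartConvSec}--\eqref{eq:L1PartConv} are first-moment arguments (Lemma~\ref{Lem:FirstMoment}). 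Separately, item~(C) gives uniform integrability of $\widetilde{R}_n$ in $L^1(\widetilde{\mu}_{\gamma,\ta})$ via the coupling and barrier/thick-point estimates, and Vitali then combines (A)--(C). Your intuition that barriers plus the coupling are the right tools is correct, but the proposal as written conflates (B) and (C) and omits the $M_{n,\ta}(w)$ decomposition and the second-moment estimates that make (B) work.
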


The proof of \cref{Prop:HierGaussAppr} is completed in \cref{sec:pf-HierGaussAppr}.

\subsection{Proof of \cref{Theo:Main}}
\label{subsec:pf-main-theorem}
In \cref{Prop:TrigToHier} and \cref{Prop:HierGaussAppr}, we constructed on the same probability space $(a_k^{(i)})_{i\in\{1,2\},k\in \N}$, $(\ta_n)_{n\in\N}$, $(g_k^{(i)})_{i\in\{1,2\},k\in \N}$, and $(\tg_n= \tg_n^{(1)} + \tg_n^{(2)})_{n\in\N}$ such that
\[
  \widetilde{\mu}_{\gamma, \tilde{g}}\ll \mu_{\gamma,a} \ll \widetilde{\mu}_{\gamma, \tilde{g}} \qquad \text{almost surely.}
\] 
\cref{Prop:TrigToHier} applied with $a_k^{(i)}= g_k^{(i)}$ (and thus $\ta_n = \tg_n$) yields 
\[
   \mu_{\gamma,g} \ll \widetilde{\mu}_{\gamma, \tilde{g}} \ll \mu_{\gamma,g} \qquad \text{almost surely.}
\]
The above two displays then yield \cref{Theo:Main}\,. \qed

\section{Proof of Proposition \ref{Prop:HierGaussAppr}} 
\label{sec:pf-HierGaussAppr}
We have already constructed a coupling between the non-Gaussian random variables $(\ta_n^{(i)})_{i\in\{1,2\}, n\in\N}$ with Gaussians $(\tg_n^{(i)})_{i\in\{1,2\},n\in\N}$ in \cref{subsec:Gaussian-models}. Let $(\Omega, \cF, \P)$  denote the common probability space on which these variables are constructed.
Towards  \cref{Prop:HierGaussAppr}, it remains to show the absolute continuity \cref{eq:MutAbsContHierMod}. 
The natural candidate for a Radon-Nikodym derivative between $\widetilde{\mu}_{\gamma,\ta}$ and $\widetilde{\mu}_{\gamma,\tg}$ is the $n\to\infty$ limit of 
\begin{equation}\label{def:RN-derivative}
\widetilde{R}_{n,\gamma}(t) := e^{\gamma (\tS_{n,\tg}(t)-\tS_{n,\ta}(t))} \tZ_{n,\gamma,\ta}(t)\tZ_{n,\gamma,\tg}(t)^{-1}.
\end{equation} 
Our strategy is then to show that the following hold $\mathbb{P}$-almost surely:
\begin{enumerate}[(A)]
\item \label{it:RNconv}$\widetilde{R}_{n,\gamma}(t)$ converges  to a value in $(0,\infty)$ for $\widetilde{\mu}_{\gamma,\ta}$-almost every and $\widetilde{\mu}_{\gamma,\tg}$-almost every $t \in [0,1]$,
\item \label{it:SetConv} for all $A\in\mathcal{B}([0,1])$ we have
\begin{align}
    \lim\limits_{n\to\infty} \int_A \widetilde{R}_{n,\gamma}(t)\, \mathrm{d}\widetilde{\mu}_{\gamma, \ta}(t) &= \widetilde{\mu}_{\gamma,\tg}[A] \,, \label{eq:IntegralLimit}\\
    \lim\limits_{n\to\infty} \int_A \widetilde{R}_{n,\gamma}(t)^{-1}\, \mathrm{d}\widetilde{\mu}_{\gamma, \tg}(t) &= \widetilde{\mu}_{\gamma,\ta}[A] \,, \text{ and} \label{eq:IntegralLimit2}
\end{align}
\item \label{it:RNUnifInt}$(\widetilde{R}_{n,\gamma}(t))_{n\in\N}$ is uniformly integrable with respect to $\widetilde{\mu}_{\gamma,\ta}$ and $(\widetilde{R}_{n,\gamma}(t)^{-1})_{n\in\N}$ is uniformly integrable with respect to $\widetilde{\mu}_{\gamma,\tg}$.
\end{enumerate}
Before we establish  \eqref{it:RNconv}--\eqref{it:RNUnifInt}, we quickly show how they imply Proposition~\ref{Prop:HierGaussAppr}.

\begin{proof}[Proof of Proposition~\ref{Prop:HierGaussAppr} given \eqref{it:RNconv}--\eqref{it:RNUnifInt}]
We show  $\widetilde{\mu}_{\gamma,\tg}\ll \widetilde{\mu}_{\gamma,\ta}$; the other direction follows similarly.

Define $\widetilde{R}_{\infty,\gamma}(t) := \lim\limits_{n\to\infty} \widetilde{R}_{n,\gamma}(t)$, and note that by \eqref{it:RNconv} this limit exists for $\widetilde{\mu}_{\gamma,\ta}$-almost every $t\in[0,1]$. This along with \eqref{it:RNUnifInt} allows us to apply Vitali's convergence theorem, which yields $\widetilde{R}_{\infty,\gamma}\in L^1(\widetilde{\mu}_{\gamma,\ta})$ and, for each $A\in\mathcal{B}([0,1])$,
\[
\lim\limits_{n\to\infty} \int_A\widetilde{R}_{n,\gamma}(t)\;\mathrm{d}\widetilde{\mu}_{\gamma,\ta}(t) = \int_A \widetilde{R}_{\infty,\gamma}(t)\;\mathrm{d}\widetilde{\mu}_{\gamma,\ta}(t) \,.
\]
Combining the last display with \eqref{it:SetConv} yields 
\[
\widetilde{\mu}_{\gamma,\tg}[A] = \int_A \widetilde{R}_{\infty,\gamma}(t)\;\mathrm{d}\widetilde{\mu}_{\gamma,\ta}(t) \,,
\]
thereby establishing the desired absolute continuity.
\end{proof}

We prove items \eqref{it:RNconv}--\eqref{it:RNUnifInt} in the next three subsections.

\subsection{Proof of \eqref{it:RNconv}: convergence of the Radon-Nikodym derivative}
\Cref{eqn:junnila-lemma-7} implies $\tZ_{n,\gamma, \ta}(t)\tZ_{n,\gamma,\tg}(t)^{-1}$ converges in $(0,\infty)$ as $n\to\infty$ for every $t\in [0,1]$.
Thus, we need only show the convergence of $e^{ \gamma (\tS_{n,\tg}(t)-\tS_{n,\ta}(t))}$.
This will come from the coupling estimate along thick points (\cref{Lem:CouplThick}), as well as \cref{Lem:SupportThick} below, which states that the chaos measures are carried by thick points.
Towards this, set
\[
\mathcal{T} := \bigg\{t\in [0,1] \,:\, \lim\limits_{n\to\infty} \frac{\tS_{n,\ta}(t)}{n} = \log(2)\gamma\bigg\} \cup \bigg\{t\in [0,1] \,:\, \lim\limits_{n\to\infty} \frac{\tS_{n,\tg}(t)}{n} = \log(2)\gamma\bigg\}\,.
\]
\begin{lemma}\label{Lem:SupportThick}
    We have $\mathbb{P}$-almost surely $\widetilde{\mu}_{\gamma,\ta}[\mathcal{T}^c]+\widetilde{\mu}_{\gamma,\tg}[\mathcal{T}^c] = 0$. 
\end{lemma}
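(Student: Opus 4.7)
The plan is to prove the statement for $\widetilde{\mu}_{\gamma,\ta}$; the argument for $\widetilde{\mu}_{\gamma,\tg}$ is identical (indeed, the Gaussian case is the classical one handled by Cameron--Martin). I will employ the standard first-moment / Girsanov strategy used to show that GMC is supported on its thick points, substituting Lemma~\ref{lem:junnila-lemma-7} as a near-Gaussian surrogate for Cameron--Martin so that the argument applies in the non-Gaussian setting.

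First, for $\epsilon>0$ and $n\in\N$, I reduce the problem to the scale-by-scale estimate $\E[\widetilde{\mu}_{\gamma,\ta}[B_n^\epsilon]]\le C e^{-c_\epsilon n}$, where
\[
B_n^\epsilon := \{t\in[0,1]:|\tS_{n,\ta}(t)-\gamma n\log 2|>\epsilon n\}.
\]
Once this is known, Tonelli together with Borel--Cantelli gives $\widetilde{\mu}_{\gamma,\ta}[\limsup_n B_n^\epsilon]=0$ almost surely, and intersecting over a sequence $\epsilon_k\downarrow 0$ yields $\tS_{n,\ta}(t)/n\to\gamma\log 2$ for $\widetilde{\mu}_{\gamma,\ta}$-almost every $t$.

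To compute the first moment, I observe that $B_n^\epsilon$ is measurable with respect to $\cF_n:=\sigma(a_k^{(i)}:k<2^n)$, while $\tS_{n',\ta}(t)-\tS_{n,\ta}(t)$ is independent of $\cF_n$ for $n'\ge n$. Combining Fubini with the $L^1$-convergence of $\widetilde{\mu}_{n',\gamma,\ta}$ to $\widetilde{\mu}_{\gamma,\ta}$ from \cite{Junnila}, I expect the identity
\[
\E[\widetilde{\mu}_{\gamma,\ta}[B_n^\epsilon]]=\int_0^1 \mathbb{Q}_t^n\big(|\tS_{n,\ta}(t)-\gamma n\log 2|>\epsilon n\big)\, dt,
\]
where $d\mathbb{Q}_t^n/d\P:=\tZ_{n,\gamma,\ta}(t)^{-1}e^{\gamma\tS_{n,\ta}(t)}$ is the Girsanov tilt. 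Applying Lemma~\ref{lem:junnila-lemma-7} to $\E[e^{(\gamma+\lambda)\tS_{n,\ta}(t)}]$ and $\E[e^{\gamma\tS_{n,\ta}(t)}]$ and using $\Var[\tS_{n,\ta}(t)]=n\log 2+o_n(1)$, one finds, uniformly in $t\in[0,1]$ and $|\lambda|$ bounded,
\[
\log\E_{\mathbb{Q}_t^n}\big[e^{\lambda(\tS_{n,\ta}(t)-\gamma n\log 2)}\big]=\frac{\lambda^2}{2}n\log 2+O(1).
\]
A Chernoff bound with $\lambda=\pm\epsilon/\log 2$ then yields $\mathbb{Q}_t^n(|\tS_{n,\ta}(t)-\gamma n\log 2|>\epsilon n)\le Ce^{-c\epsilon^2 n}$, uniformly in $t$, and integrating over $t$ closes the loop.

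The main subtlety is in the tilted MGF calculation: for a Gaussian field, Cameron--Martin shifts the law of $\tS_{n,\ta}(t)$ by $\gamma\Var[\tS_{n,\ta}(t)]\approx\gamma n\log 2$ under $\mathbb{Q}_t^n$; in our non-Gaussian setting, Lemma~\ref{lem:junnila-lemma-7} plays precisely this role by asserting that the joint Laplace transform matches the Gaussian one up to a bounded error. The only other delicate point is the Fubini/limit interchange in the first-moment identity, which rests on the $L^p$-convergence for some $p>1$ guaranteed by \cite{Junnila}. I note that this argument works for all subcritical $\gamma\in(0,\sqrt 2)$ and does not invoke the stronger hypothesis $\gamma>1$ used elsewhere in the paper.
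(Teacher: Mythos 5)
Your proposal is correct and follows essentially the same route as the paper: a first-moment bound on the chaos mass of the non-thick set at scale $n$, obtained by exploiting generational independence to reduce to a Chernoff-type estimate under the Girsanov tilt $\mathrm{d}\mathbb{Q}_t^n/\mathrm{d}\P = \tZ_{n,\gamma,\ta}(t)^{-1}e^{\gamma\tS_{n,\ta}(t)}$, with Lemma~\ref{lem:junnila-lemma-7} supplying the near-Gaussian Laplace transform, followed by Borel--Cantelli (the paper phrases this via the product measure $Q(A) = \E\int\indset{A}\,\mathrm{d}\widetilde{\mu}_{\gamma,\ta}$ on $\Omega\times[0,1]$). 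The only cosmetic differences are that the paper parametrizes by one-sided events $E_{n,j}^\pm$ indexed by $j\in\N$ and applies exponential Chebyshev inside the sum over $w\in\cN_n$, whereas you use two-sided events $B_n^\epsilon$ and phrase the bound in tilted-MGF language; your closing remark that the argument works for all $\gamma\in(0,\sqrt2)$ is correct and consistent with the paper's usage.
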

\cref{Lem:SupportThick} is proved in Appendix~\ref{appendix:thick}. 
We say $t\in\mathcal{T}_{n,\gamma}$ if and only if $v_n(t)\in \mathcal{T}_{n,\gamma}$. Recall also the notation set forth in the display above \eqref{def:tree-field}. By definition, we have
\begin{align*}
\mathcal{T}\subseteq \bigcup_{N\in\N} \bigcap_{n\ge N} \mathcal{T}_{n,\gamma}\,.
\end{align*}
Thus, Lemma~\ref{Lem:SupportThick} implies it suffices to show  $\widetilde{R}_{n,\gamma}(t)$ converges for $t\in\bigcup_{N\in\N} \bigcap_{n\ge N} \mathcal{T}_{n,\gamma}$. Let $t\in \bigcup_{N\in\N}\bigcap_{n\ge N} \mathcal{T}_{n,\gamma}$ be arbitrary.  There exists exists a $N_t(\omega)\in\N$ such that $t\in \bigcap_{n\ge N_t(\omega)} \mathcal{T}_{n,\gamma}$. Furthermore,  \cref{eq:CouplingGoodOnThick} and Borel-Cantelli imply that for each $\omega \in \Omega$, there is an $N_0(\omega)$ such that for all $n\ge N_0(\omega)$ and $t\in \mathcal{T}_{n,\gamma}$ we have $|\tg_{n}(t)-\ta_n(t)| \le n^{-2}$. Set $\widetilde{N}_t(\omega) := \max\{N_0(\omega), N_t(\omega)\}$. For $n\ge \widetilde{N}_t(\omega)$, we have
\[
\sum_{m=1}^n |\tg_m(t)-\ta_m(t)|\le \sum_{m=1}^{\widetilde{N}_t(\omega)} \left|\tg_{m}(t)-\ta_{m}(t)\right|+\sum_{m = \widetilde{N}_t(\omega)}^{n} m^{-2}\le C_t(\omega) <\infty\,.
\]
In particular, $\sum_{m=1}^{n} (\tg_{m}(t)-\ta_{m}(t))$ converges absolutely, which implies  $\widetilde{R}_{n,\gamma}(t)$ converges to a value in $(0,\infty)$. Since $t\in \bigcup_{N\in\N}\bigcap_{n\ge N} \mathcal{T}_{n,\gamma}$ was arbitary, this implies that $\widetilde{R}_{n,\gamma}(t)$ converges $\widetilde{\mu}_{\gamma,\ta}$-almost everywhere and $\widetilde{\mu}_{\gamma,\tg}$-almost everywhere to a value in $(0,\infty)$. \qed

\subsection{Proof of \eqref{it:SetConv}: integral limits} \label{Sec:IntLims}
We only show the first equation \cref{eq:IntegralLimit}, as the proof of \cref{eq:IntegralLimit2} can be done analogously.

\subsubsection{Reduction, and overview of the proof}
We begin by reducing \cref{eq:IntegralLimit} to more tractable limit statements, providing an overview of the proof strategy along the way.

Let us define
\begin{multline*}
  \tZ_{n,\gamma,\tg}(w) := \tZ_{n,\gamma,\tg}(t_n(w))  \text{ for $w \in \cN_n$, }\\
   \text{and} \qquad I_m(v) := I_m(j)  \text{ for $v \in \cN_m$ verifying $t_m(v) \in I_m(j)$.}
\end{multline*}
It suffices to show \cref{eq:IntegralLimit} for sets of the form $A := I_{m}(v)$, for all $m\in\N$ and $v\in \mathcal{N}_m$.
Thus, we fix $m\in\N$ and $v\in \mathcal{N}_m$, and set $\mathcal{N}_n(v)$ to be the set of particles in generation $n$ who are descendants of $v$.  Using that $\widetilde{R}_{n,\gamma}(t)$ is constant on each of the $I_{n}(w)$, $w\in\mathcal{N}_n(v)$, we get
\begin{align*}
\widetilde{\mu}_{\gamma,\tg}[I_m(v)] &=  \lim\limits_{n\to\infty}\sum_{w\in\mathcal{N}_n(v)} |I_n(w)| \, e^{\gamma \tS_{n,\tg}(w)}\tZ_{n,\gamma,\tg}(w)^{-1} \,, \text{ and}\\
\lim\limits_{n\to\infty} \int_{I_m(v)} \widetilde{R}_{n,\gamma}(t) \mathrm{d}\widetilde{\mu}_{\gamma,\ta}(t) &= \lim\limits_{n\to\infty} \sum_{w\in\mathcal{N}_n(v)} \Big(\widetilde{\mu}_{\gamma,\ta}[I_n(w)]e^{-\gamma \tS_{n,\ta}(w)}\tZ_{n,\gamma,\ta}(w) \Big) \\
&\quad \qquad\times e^{\gamma \tS_{n,\tg}(w)}\tZ_{n,\gamma,\tg}(w)^{-1} \,.
\end{align*}
For $w\in\mathcal{N}_n$, define the quantity
\begin{align*}
    M_{n,\ta}(w) &:= |I_n(w)|^{-1}\widetilde{\mu}_{\gamma,\ta}[I_n(w)]e^{-\gamma \tS_{n,\ta}(w)}\tZ_{n,\gamma,\ta}(w)  \\
    &= \lim\limits_{r\to\infty} |I_n(w)|^{-1}\int_{I_n(w)} e^{\gamma \tS_{(n,n+r],\ta}(t)}\tZ_{(n,n+r],\gamma,\ta}(t)^{-1} \d t\,,
\end{align*}
where 
\[
    \tS_{(n,n+r],\ta}(t) := \tS_{n+r,\ta}(t)-\tS_{n,\ta}(t)\quad\text{and}\quad \tZ_{(n,n+r],\gamma,\ta}(t) := \mathbb{E}[e^{\gamma \tS_{(n,n+r],\ta}(t)}]\,.
\] 
It is helpful to observe that $M_{n,\ta}(w)$ is the total weight of the hierarchical chaos measure $\widetilde{\mu}_{\gamma,\ta}$ on the subtree of $\mathfrak{T}$ rooted at $w$, rescaled (by $|I_n(w)|^{-1}$) so that $\mathbb{E}[M_{n,\ta}(w)] = 1$. 
Further, as a consequence of the ``generational independence'' (that is,  independence of the $\ta_n^{(i)}$ across different $n$), the random vector $(M_{n,\ta}(w))_{w\in\mathcal{N}_n(v)}$ is independent of $ (e^{\gamma \tS_{n,\tg}(w)}\tZ_{n,\gamma,\tg}(w)^{-1})_{w\in\mathcal{N}_n(v)}$. This fact is used numerous times below.

Using the definition of $M_{n,\ta}(w)$, it suffices to show  the following holds $\mathbb{P}$-almost surely:
\begin{equation}\label{eq:GoalIntegralConvergence}
\lim\limits_{n\to\infty} \sum_{w\in\mathcal{N}_n(v)} |I_n(w)|\, (M_{n,\ta}(w)-1) \, e^{\gamma \tS_{n,\tg}(w)}\tZ_{n,\gamma,\tg}(w)^{-1} = 0\,.
\end{equation}

\begin{remark}[Proof idea]
Since $|I_n(w)| \approx |\mathcal{N}_n(v)|^{-1}$
and  due to the aforementioned independence $\mathbb{E}[  (M_{n,\ta}(w)-1) e^{\gamma \tS_{n,\tg}(w)}\tZ_{n,\gamma,\tg}(w)^{-1} ] = 0$, the above roughly corresponds to a law of large numbers. 
However, all terms in the above sum are correlated to various degrees.
We prove \cref{eq:GoalIntegralConvergence} by moment bounds on the sum that give the desired convergence upon applying Markov's inequality and Borel-Cantelli.
An issue with this approach is that the first moment does not decay, while $\mathbb{E}[M_{n,\ta}(w)^2] = \infty$ and $\mathbb{E}[(e^{\gamma \tS_{n,\tg}(w)}\tZ_{n,\gamma,\tg}(w)^{-1})^2]$ grows too fast with $n$ (in light of \cref{eqn:junnila-lemma-7}).  To deal with this, we truncate both objects by instituting   ``barrier events''. We  need a barrier  both for $\tS_{s,\tg}(w)$, $s\leq n$, and for $\tS_{(n,n+r],\ta}(t)$, which is the field below generation $n$. The differences between the truncated and un-truncated quantities can be shown to be small in $L^1$ by mimicking the computations done in \cite{BerestyckiSimplePath}.
\end{remark}

The first step is truncating $M_{n,\ta}(w)$ by instituting a barrier on $\tS_{(n,n+r],\ta}(t)$. 
Fix $\alpha\in (\gamma,\sqrt{2})$, and define the ``barrier event''
\[
B_{\log(n)^2,r;n}^\alpha(t) := \Big\{ \forall\, l\in \{\log(n)^2,\dots,  r\} \,:\, \tS_{(n,n+l],\ta}(t) \le \alpha \log(2) l\Big\}\,, \text{ for } t\in I_n(w)\,.
\]
This is the event that the field after generation $n$ is not too thick, starting from generation $n+\log(n)^2$. We then define
\begin{align}
  M_{n,\ta}^{(1)}(w) &:= \lim\limits_{r\to\infty} |I_n(w)|^{-1}\int_{I_n(w)} \indset{B_{\log(n)^2,r;n}^\alpha(t)} e^{\gamma\tS_{(n,n+r],\ta}(t)}\tZ_{(n,n+r],\gamma,\ta}(t)^{-1}\;\mathrm{d}t, \label{eq:MnMainPart}\\
  M_{n,\ta}^{(2)}(w) &:= \lim\limits_{r\to\infty} |I_n(w)|^{-1}\int_{I_n(w)} \indset{(B_{\log(n)^2,r;n}^\alpha(t))^c}e^{\gamma \tS_{(n,n+r],\ta}(t)}\tZ_{(n,n+r],\gamma,\ta}(t)^{-1}\;\mathrm{d}t \,, \label{eq:MnBadL2Part}
\end{align}
where the $\mathbb{P}$-almost sure limit in \cref{eq:MnMainPart} exists, since 
\begin{equation}\label{eq:PosSuperMart}
\Big(|I_n(w)|^{-1}\int_{I_n(w)} \indset{B_{\log(n)^2,r;n}^\alpha(t)} e^{\gamma\tS_{(n,n+r],\ta}(t)}\tZ_{(n,n+r],\gamma,\ta}(t)^{-1}\;\mathrm{d}t\Big)_{r\in\N}
\end{equation}
is a positive supermartingale. The $\mathbb{P}$-a.s.\@ limit in \cref{eq:MnBadL2Part} exists, since it is the limit of the  difference between the positive martingale $(|I_n(w)|^{-1}\int_{I_n(w)} e^{\gamma\tS_{(n,n+r],\ta}(t)}/\tZ_{(n,n+r],\gamma,\ta}(t)\;\mathrm{d}t)_{r}$ and the positive supermartingale in \cref{eq:PosSuperMart}. We note that, by definition, $M_{n,\ta}^{(1)}(w)+M_{n,\ta}^{(2)}(w) = M_{n,\ta}(w)$. For $i\in\{1,2\}$, we define the centered variants of these:
\[
\bar{M}_{n,\ta}^{(i)}(w) := M_{n,\ta}^{(i)}(w)-\mathbb{E}[M_{n,\ta}^{(i)}] \,.
\] 
While the contribution of $M_{n,\ta}^{(2)}(w)$ can be controlled with a first moment argument, we also need a truncation of $\tS_{s,\tg}(w)$, $s\leq n$, to control the contribution of $M_{n,\ta}^{(1)}(w)$. For this, we set for $w\in\mathcal{N}_n(v)$
\begin{align*}
B_{h,n}^{\alpha}(w) &:= \big\{\forall s\in\{h+1,\dots, n\}\,:\, \tS_{s,\tg}(w) \le \alpha\log(2) s\big\} \,, \text{ and}\\
A_h^{\alpha}(w) &:= \{\tS_{h,\tg}(w)>\alpha\log(2)h\} \,.
\end{align*}
Finally, recalling that $\E[M_{n,\ta}^{(1)}(w)] + \E[M_{n,\ta}^{(2)}(w)] = \E[M_{n,\ta}(w)] = 1$ for all $w\in\mathcal{N}_n(v)$, \cref{eq:GoalIntegralConvergence} (and thus \cref{eq:IntegralLimit}) follows from the existence of some $\alpha \in (\gamma, \sqrt2)$ such that $\mathbb{P}$-a.s.:
\begin{align}
    \lim\limits_{n\to\infty} \sum_{w\in\mathcal{N}_n(v)} |I_n(w)| \, \bar{M}_{n,\ta}^{(1)}(w)e^{\gamma \tS_{n,\tg}(w)}\tZ_{n,\gamma,\tg}(w)^{-1}\indset{B_{\log(n)^{1.5},n}^{\alpha}(w)}&=0 \,, \label{eq:L2PartConv}\\
    \lim\limits_{n\to\infty} \sum_{w\in\mathcal{N}_n(v)} |I_n(w)| \, \bar{M}_{n,\ta}^{(1)}(w)e^{\gamma \tS_{n,\tg}(w)}\tZ_{n,\gamma,\tg}(w)^{-1}\indset{B_{\log(n)^{1.5},n}^{\alpha}(w)^c}&=0\,, \text{ and} \label{eq:L2PartConvSec}\\
    \lim\limits_{n\to\infty} \sum_{w\in\mathcal{N}_n(v)} |I_n(w)| \, \bar{M}_{n,\ta}^{(2)}(w)e^{\gamma \tS_{n,\tg}(w)}\tZ_{n,\gamma,\tg}(w)^{-1} &= 0\,. \label{eq:L1PartConv}
\end{align}
\Cref{eq:L2PartConv} considers a barrier on the increments of the field $\tS_{\cdot,\tg}(w)$ before level $n$ from the  $B^{\alpha}_{\log(n)^{1.5},n}(w)$ term, as well as a barrier on the increments of $\widetilde{S}_{(n,\,\cdot\,],\widetilde{a}}(w)$ after level $n$ from the $\bar{M}_{n,\ta}^{(1)}(w)$ term.  It will be shown by controlling the second moment of the sum, and its proof will occupy the bulk of the remainder of the article.
\Cref{eq:L2PartConvSec,eq:L1PartConv} consider the increment of the field $\widetilde{S}_{\cdot,\widetilde{g}}(w)$ crossing a barrier before level $n$ respectively the increment of the field $\widetilde{S}_{(n,\,\cdot\,],\widetilde{a}}(w)$ crossing a barrier after level $n$; each will be shown by controlling the first moment.

\subsubsection{Proofs of \cref{eq:L2PartConv,eq:L2PartConvSec,eq:L1PartConv}}
As mentioned above, we aim to bound first and second moments. This requires the following moment bounds on the $M_{n,\ta}^{(i)}(w)$.
\begin{lemma}\label{Lem:SecondMoment}
There is an $\alpha_0\in (\gamma,\sqrt{2})$ such that for all $\alpha \in (\gamma,\alpha_0)$ and all $w\in\mathcal{N}_n(v)$,
\[
\mathbb{E}\, \big|\bar{M}_{n,\ta}^{(1)}(w)\big|^2 \le C\log(n)^2 2^{\gamma^2\log(n)^2}\,.
\]
\end{lemma}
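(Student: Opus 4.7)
The plan is to bound $\mathbb{E}|\bar{M}_{n,\ta}^{(1)}(w)|^2 \le \Var[M_{n,\ta}^{(1)}(w)] \le \mathbb{E}[(M_{n,\ta}^{(1)}(w))^2]$ by expanding the second moment into a double integral and controlling it scale-by-scale along the hierarchical tree $\mathfrak{T}$.

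First I would use Fubini (justified by positivity and dominated convergence once the bound is in place) to write
\begin{equation*}
\mathbb{E}[(M_{n,\ta}^{(1)}(w))^2] = |I_n(w)|^{-2} \lim_{r\to\infty} \int_{I_n(w)^2} \mathbb{E}\!\left[\indset{B^\alpha(t)\cap B^\alpha(s)} \frac{e^{\gamma \tS_{(n,n+r],\ta}(t)+\gamma \tS_{(n,n+r],\ta}(s)}}{\tZ_{(n,n+r],\gamma,\ta}(t)\,\tZ_{(n,n+r],\gamma,\ta}(s)}\right]\mathrm{d}t\,\mathrm{d}s.
\end{equation*}
Applying \cref{lem:junnila-lemma-7} (to the increment field $\tS_{(n,n+r],\ta}$, which inherits log-correlation by \cref{rk:junnila-estimates}) bounds the integrand above, after cancellation of the variance terms against $\tZ(t)\tZ(s)$, by $C e^{\gamma^2 \Cov[\tS_{(n,n+r],\ta}(t),\tS_{(n,n+r],\ta}(s)]}$. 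The hierarchical structure then gives this covariance as $\log(2)\cdot k(t,s)$, where $k(t,s)\in\{0,\dots,r\}$ denotes the depth (measured from level $n$) of the most recent common ancestor of $v_{\cdot}(t)$ and $v_{\cdot}(s)$.

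Next I would stratify the double integral by $k := k(t,s)$. Since the set $\{(t,s)\in I_n(w)^2 : k(t,s)=k\}$ has Lebesgue measure at most $C|I_n(w)|^2 2^{-k}$, the bound so far reads
\begin{equation*}
\mathbb{E}[(M_{n,\ta}^{(1)}(w))^2] \le C \sum_{k=0}^{\infty} 2^{(\gamma^2-1)k}\, p_n(k),
\end{equation*}
where $p_n(k)$ is a Cameron--Martin-tilted probability that the barrier $B^\alpha$ is realised given that $t$ and $s$ branch at depth $k$. To produce $p_n(k)$ from the expectation, I would use a non-Gaussian analogue of Girsanov --- exactly the shift procedure underlying \cref{lem:junnila-lemma-7} --- which, after tilting by $e^{\gamma(\tS(t)+\tS(s))}/(\tZ(t)\tZ(s))$, shifts the random walk $\ell \mapsto \tS_{(n,n+\ell],\ta}(t)$ by its covariance with the tilting exponent: drift $2\gamma\log(2)\ell$ on the common part $\ell\le k$ and drift $\gamma\log(2)\ell$ on the distinct arm $\ell > k$.

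Finally, I would estimate $p_n(k)$ separately in two regimes. For $k<\log(n)^2$ the barrier is vacuous (it only constrains levels $\ell\ge\log(n)^2$), so $p_n(k)\le 1$ and this range contributes at most $C\cdot 2^{(\gamma^2-1)\log(n)^2}\le C\log(n)^2\cdot 2^{\gamma^2\log(n)^2}$. For $k\ge\log(n)^2$ the barrier at levels $\ell\in[\log(n)^2,k]$ must hold for the tilted walk, which on this range has drift $2\gamma\log(2)\ell$; since $\alpha<\sqrt{2}<2\gamma$ (using $\gamma>1$), a standard ballot/large-deviations estimate for a random walk with increments bounded by Bernstein-type tails staying below the line $\alpha\log(2)\ell$ gives $p_n(k)\le e^{-c(k-\log(n)^2)}$ for some $c=c(\alpha,\gamma)>0$. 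This makes the tail $\sum_{k\ge \log(n)^2} 2^{(\gamma^2-1)k}p_n(k)$ bounded by $C\cdot 2^{(\gamma^2-1)\log(n)^2}$ as well, yielding the claim.

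\textbf{Main obstacle.} The delicate step is the barrier estimate in the second regime: one must carry out the shift argument in the non-Gaussian setting rigorously (controlling the bounded but not negligible error terms $\widetilde{\zeta}$ in \cref{lem:junnila-lemma-7} uniformly across scales) and then invoke a ballot/Mogulskii-type bound for the tilted random walk with non-Gaussian increments. The fact that $\gamma>1$ enters precisely here, to ensure $2\gamma > \sqrt{2} > \alpha$, so that the drift of the doubly-tilted walk on the common part strictly exceeds the barrier slope and produces genuine exponential decay in~$k$.
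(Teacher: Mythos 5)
Your proposal takes a genuinely different route from the paper. The paper decomposes $M_{n,\ta}^{(1)}(w)$ according to the \emph{last generation $h\le\log(n)^2$ at which the walk is $\alpha$-thick}, applies Cauchy--Schwarz to split into subintervals at generation $n+h+1$, and then invokes \cite[Proposition~11]{Junnila} as a black box: that proposition directly bounds the second moment of a barrier-constrained chaos integral by the square of the interval's length, so no change of measure and no ballot estimate are ever performed. You instead stratify the double integral $\int_{I_n(w)^2}$ by the branching depth $k(t,s)$, tilt by $e^{\gamma(\tS(t)+\tS(s))}/(\tZ(t)\tZ(s))$ in the Girsanov style, and then estimate the barrier probability $p_n(k)$ of the tilted walk by a ballot-type argument. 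This is the ``classical'' GMC barrier computation, and it should work; the paper's route is shorter because Junnila has already packaged the hard part.

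Two things in your plan need tightening. First, the rate constant: for the tail $\sum_{k\ge\log(n)^2}2^{(\gamma^2-1)k}p_n(k)$ to be dominated by the $k<\log(n)^2$ range, you need $p_n(k)\lesssim e^{-ck}$ with $c>(\gamma^2-1)\log 2$, not merely ``some $c>0$'' as written. The Cram\'er rate at the relevant scale is $\approx\tfrac{(2\gamma-\alpha)^2}{2}\log 2$, which tends to $\tfrac{\gamma^2}{2}\log 2$ as $\alpha\searrow\gamma$; the needed inequality $\tfrac{\gamma^2}{2}>\gamma^2-1$ is exactly $\gamma<\sqrt2$, so it works, but you must say so. This also requires that the tilted non-Gaussian increments have a rate function no worse than the Gaussian one at that scale — plausible by a CLT/Bernstein argument since the per-level increments are themselves sums of $\approx 2^m$ small terms, but it is an assertion, not a freebie. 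Second, your closing remark that ``$\gamma>1$ enters precisely here'' is not correct: one only needs $2\gamma>\alpha$, which holds trivially once $\alpha$ is chosen close to $\gamma$, and the paper's lemma and its proof are valid for all $\gamma\in(0,\sqrt2)$. The $\gamma>1$ restriction in \cref{Theo:Main} comes from the Yurinskii coupling (\cref{Lem:CouplThick}), not from this second-moment bound. Finally, a small expository point: your display asserts the integrand is bounded by $Ce^{\gamma^2\Cov}$ ``after applying \cref{lem:junnila-lemma-7}'' before the tilting is discussed; strictly, the indicator must be carried through the change of measure, otherwise the untruncated second moment is infinite — your later paragraph makes clear you know this, but as written the first step looks like you are dropping the barrier.
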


\begin{lemma}\label{Lem:FirstMoment}
For any $\alpha >\gamma$ and $w\in\mathcal{N}_n(v)$, 
\[
\mathbb{E}\Big[M_{n,\ta}^{(2)}(w)\Big] \le C2^{-\frac{(\alpha-\gamma)^2}{2}\log(n)^2}\,.
\]
\end{lemma}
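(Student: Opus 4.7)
My plan is to bound $\mathbb{E}[M_{n,\ta}^{(2)}(w)]$ directly via a Chernoff (exponential Chebyshev) calculation after interchanging expectation, integration, and the limit $r\to\infty$. Since $M_{n,\ta}^{(2)}(w)$ is the almost-sure limit of nonnegative quantities, Fatou combined with Fubini reduces the task to a uniform bound, for each $t\in I_n(w)$ and each $r$, on
\[
\mathbb{E}\Big[\indset{(B_{\log(n)^2,r;n}^\alpha(t))^c} e^{\gamma \tS_{(n,n+r],\ta}(t)}\Big]\,\tZ_{(n,n+r],\gamma,\ta}(t)^{-1}\,.
\]
A union bound over $l\in\{\log(n)^2,\dots,r\}$ decomposes $(B_{\log(n)^2,r;n}^\alpha(t))^c = \bigcup_l\{\tS_{(n,n+l],\ta}(t)>\alpha\log(2)l\}$, so it suffices to control one barrier crossing at each level $l$ and then sum.

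For a fixed $l$, I would exploit the \emph{generational independence} of the increments: since $\ta_m$ depends only on the block $\{a_k^{(i)}:2^{m-1}\le k<2^m\}$ and these blocks are disjoint across $m$, the variables $\tS_{(n,n+l],\ta}(t)$ and $\tS_{(n+l,n+r],\ta}(t)$ are independent. Consequently $\tZ_{(n,n+r],\gamma,\ta}(t) = \tZ_{(n,n+l],\gamma,\ta}(t)\,\tZ_{(n+l,n+r],\gamma,\ta}(t)$ and, splitting $e^{\gamma \tS_{(n,n+r],\ta}(t)}$ accordingly, the tail factors cancel leaving
\[
\mathbb{E}\Big[\indset{\tS_{(n,n+l],\ta}(t)>\alpha\log(2)l} e^{\gamma \tS_{(n,n+l],\ta}(t)}\Big]\,\tZ_{(n,n+l],\gamma,\ta}(t)^{-1}\,.
\]
Tilting the indicator by $e^{\lambda(\tS_{(n,n+l],\ta}(t)-\alpha\log(2)l)}$ with $\lambda>0$ and applying Lemma~\ref{lem:junnila-lemma-7} with $(\gamma_1,\gamma_2)=(\gamma+\lambda,0)$ in the numerator and $(\gamma,0)$ in the denominator (noting that the two $\widetilde{\zeta}$ contributions are both $O(1)$ and that $\Var[\tS_{(n,n+l],\ta}(t)] = l\log 2 + o_l(1)$) yields a bound of the form
\[
C\,2^{-\lambda\alpha l + ((\gamma+\lambda)^2-\gamma^2) l/2} \;=\; C\,2^{\,l(-\alpha\lambda+\gamma\lambda+\lambda^2/2)}\,.
\]
Optimizing in $\lambda$ gives the canonical choice $\lambda = \alpha-\gamma > 0$ with exponent $-(\alpha-\gamma)^2 l/2$, so each term is $\le C\,2^{-(\alpha-\gamma)^2 l/2}$ uniformly in $t$ and $r$. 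Summing the geometric series from $l=\log(n)^2$ produces the desired $C\,2^{-(\alpha-\gamma)^2\log(n)^2/2}$, and integrating over $I_n(w)$ and dividing by $|I_n(w)|$ costs nothing since the bound is uniform in $t$.

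I do not expect a serious obstacle here: this is the one-sided first-moment counterpart of the classical cascade barrier estimate, and both key inputs (generational independence of the $\ta_m$ and Junnila's Laplace transform asymptotic) are already available. The only subtleties are the harmless Fatou/Fubini exchange justifying the passage to the $\liminf_{r\to\infty}$, and carefully tracking the $O(1)$ multiplicative constants from \cref{lem:junnila-lemma-7}, which cancel between the numerator and denominator up to a factor absorbed into $C$. The genuinely harder piece of the broader argument (equation \eqref{eq:L2PartConv}) is the second-moment bound in Lemma~\ref{Lem:SecondMoment}, not this lemma.
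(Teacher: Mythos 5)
Your proposal is correct and follows essentially the same route as the paper: a union bound over barrier-crossing levels $l \ge \log(n)^2$, generational independence to factor out and cancel the tail $\tZ_{(n+l,n+r],\gamma,\ta}(t)$, an exponential Chernoff tilt of the indicator, and Junnila's Laplace-transform estimate, summed geometrically from $l = \log(n)^2$. The only cosmetic differences are that you introduce a free tilt parameter $\lambda$ and optimize to recover $\lambda=\alpha-\gamma$, whereas the paper plugs this value in directly, and that the paper phrases the union bound as a sum over the first generation at which the barrier is crossed (which gives the same bound).
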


\begin{lemma}\label{Lem:CovarTerm}
For any $\alpha > \gamma$  and $w_1,w_2 \in\mathcal{N}_n(v)$ satisfying $|t_n(w_1)-t_n(w_2)| \in [2^{-n}f(n),1/8]$, 
\begin{align*}
\mathbb{E}\Big[ \bar{M}_{n,\ta}^{(1)}(w_1)\bar{M}_{n,\ta}^{(1)}(w_2)\Big] \leq Cn^{-2}\,.
\end{align*}
Furthermore, if $|t_n(w_1)-t_n(w_2)| \in [2^{-n+\log(n)^2}, 1/8]$, 
\begin{equation}\label{eq:SecondCov}
\mathbb{E}\Big[ \bar{M}_{n,\ta}^{(1)}(w_1)\bar{M}_{n,\ta}^{(1)}(w_2)\Big] \le C2^{-\min(\frac{(\alpha-\gamma)^2}{2},1)\log(n)^2}\,.
\end{equation}
\end{lemma}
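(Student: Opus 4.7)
The strategy is to expand the covariance as a double integral, approximate the Laplace transform of the truncated high-frequency field by a Gaussian one (with tiny error), and extract smallness from the oscillatory Fourier covariance. First, interchange integrals with expectation in the definition of $M^{(1)}_{n,\ta}(w_i)$ and pass the $r$-limit through to write
\[
\E\!\left[\bar M^{(1)}_{n,\ta}(w_1)\bar M^{(1)}_{n,\ta}(w_2)\right] = \lim_{r\to\infty} \frac{1}{|I_n(w_1)||I_n(w_2)|} \iint_{I_n(w_1)\times I_n(w_2)} D_r(t,s)\,\d t\,\d s,
\]
where $D_r(t,s)$ is the joint expectation of the barrier-truncated normalized exponentials at $t,s$ minus the product of the two analogous marginal expectations, abbreviating $B(t):=B^\alpha_{\log(n)^2,r;n}(t)$. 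Existence of the $r\to\infty$ limit is justified exactly as below \eqref{eq:MnMainPart}.

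The crucial input is that $\tS_{(n,n+r],\ta}$ uses only Fourier modes of frequency $\ge 2^n$. The Edgeworth expansion underlying \cref{lem:junnila-lemma-7} applied to this sub-field has total Taylor-remainder $O(\sum_{k\ge 2^n}d_k) = O(2^{-n/2})$ with no additional bounded correction $\widetilde{\zeta}$ (such a term would come from summing remainders up to a fixed low-frequency cutoff $k_0$, absent here). Hence the joint and marginal Laplace transforms of $\tS_{(n,n+r],\ta}$ are Gaussian up to multiplicative error $e^{O(2^{-n/2})}$, and $D_r(t,s)$ reduces, via a Cameron--Martin-type shift absorbed into this Gaussian approximation, to a difference of shifted barrier probabilities multiplied by $e^{\gamma^2\Cov[\tS(t),\tS(s)]}-1$, plus a decoupling error for the shifted barrier events. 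For the Fourier covariance itself, summation by parts on $\sum_{k=2^{m-1}}^{2^m-1} k^{-1}\cos(2\pi k\theta)$ gives $\le C/(2^m|\sin\pi\theta|)$ per level; summing over $m\ge n+1$ with $\theta$ comparable to $|t-s|$ yields
\[
\big|\Cov[\tS_{(n,n+r],\ta}(t),\tS_{(n,n+r],\ta}(s)]\big|\le\frac{C}{2^n|t-s|}.
\]
This gives $C/n^4$ in the regime $|t-s|\ge 2^{-n}f(n)=2^{-n}n^4$ (the first claim, with room to spare) and $C\cdot 2^{-\log(n)^2}$ in the regime $|t-s|\ge 2^{-n+\log(n)^2}$.

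For the refined bound \eqref{eq:SecondCov}, the exponent $\log(n)^2$ in the $\min$ comes from the Fourier covariance above (via $|e^{\gamma^2\Cov}-1|\le C|\Cov|$), while the exponent $(\alpha-\gamma)^2\log(n)^2/2$ originates from the barrier starting time $l=\log(n)^2$: under the shift, the walk $\tS_{(n,n+l],\ta}(t)$ is centered with variance $l\log 2$, and the complementary event $\{\tS_{(n,n+l],\ta}(t)>(\alpha-\gamma)l\log 2\}$ has probability $\le 2^{-(\alpha-\gamma)^2 l/2}$ at $l=\log(n)^2$ by a Gaussian tail bound. Combining these via $|\P[A\cap B]-\P[A]\P[B]|\le\P[A^c]+\P[B^c]$ (for events $A,B$ close to certain) yields $|D_r(t,s)|\le C\cdot 2^{-\min((\alpha-\gamma)^2/2,1)\log(n)^2}$ after integration.

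The main technical obstacle is the careful justification of the decoupling of the joint barrier event under the Cameron--Martin shift: the two walks $(\tS_{(n,n+l],\ta}(t))_l$ and $(\tS_{(n,n+l],\ta}(s))_l$ remain correlated at every scale, so showing that the joint barrier probability factorizes up to error $O(|\Cov|)$ requires a chaining-over-scales argument. At each $m\in[n+1,n+r]$, the incremental covariance $\Cov[\ta_m(v_m(t)),\ta_m(v_m(s))]$ is bounded by $C/(2^{m-n}\cdot 2^n|t-s|)$, decaying geometrically in $m-n$; this should enable a step-by-step Gaussian coupling to two independent walks with the correct marginals, whose total $L^1$ coupling error is controlled by $\sum_m$ of these increments, itself dominated by $|\Cov|$, and hence absorbed into the stated bounds.
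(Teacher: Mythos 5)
Your proposal takes a genuinely different and considerably more elaborate route than the paper, and it contains a gap that the paper's argument is carefully engineered to avoid.

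The paper's proof is remarkably short: it writes
\[
\E\big[\bar M^{(1)}_{n,\ta}(w_1)\bar M^{(1)}_{n,\ta}(w_2)\big]
=\E\big[M^{(1)}_{n,\ta}(w_1)M^{(1)}_{n,\ta}(w_2)\big]-\E\big[M^{(1)}_{n,\ta}(w_1)\big]\E\big[M^{(1)}_{n,\ta}(w_2)\big],
\]
bounds the joint term \emph{from above} by simply discarding the barrier indicators, so that \cref{lem:junnila-lemma-7} together with the Fourier covariance bound $\Cov[\ta_{n+h}(t),\ta_{n+h}(s)]\le C 2^{-n-h}|t_n(w_1)-t_n(w_2)|^{-1}$ gives $\E[M^{(1)}M^{(1)}]\le \exp(C\gamma^2 2^{-n}|t_n(w_1)-t_n(w_2)|^{-1}+O_n(2^{-n/2}))$, and bounds the product of first moments \emph{from below} via $\E[M^{(1)}]\ge 1-C2^{-(\alpha-\gamma)^2\log(n)^2/2}$, which is immediate from $\E[M^{(1)}]+\E[M^{(2)}]=1$ and \cref{Lem:FirstMoment}. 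No barrier event ever has to be analyzed jointly at two points; both thresholds in \eqref{eq:SecondCov} drop out of the subtraction.

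Your approach instead tries to estimate $D_r(t,s)$ directly by performing a Cameron--Martin-type change of measure and then decoupling the two shifted barrier events. The gap here is substantive: \cref{lem:junnila-lemma-7} (and the Edgeworth-type remainder you invoke) controls only the \emph{Laplace transform of the endpoint} $(\tS_{(n,n+r]}(t),\tS_{(n,n+r]}(s))$, not the distribution of the full trajectory $(\tS_{(n,n+l]}(t))_{l\le r}$. A Cameron--Martin/Girsanov reduction of the barrier events to shifted barrier probabilities, and the subsequent comparison $\P[\hat B(t)\cap\hat B(s)]\approx\P[\tilde B(t)]\P[\tilde B(s)]$, require path-level Gaussian approximation or at least a path-level tilted-measure analysis, which you do not have. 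You also under-count the error sources: besides the decoupling of the correlated walks (your chaining-over-scales step), the shifts defining $\hat B$ and $\tilde B$ differ by an amount proportional to the cross-covariance, and this shift discrepancy contributes its own error to the barrier probabilities. Your plan is not unsalvageable — one can work under the exponentially tilted (non-Gaussian) measure and bound the single-point barrier-escape probability, and then the trivial bound $|\P[A\cap B]-\P[A]\P[B]|\le\P[A^c]+\P[B^c]$ already suffices without any chaining — but at that point you have essentially reproduced the first-moment input of \cref{Lem:FirstMoment}, and the Cameron--Martin machinery and joint-path coupling you introduced are unnecessary. You correctly identified the two sources of the exponent in \eqref{eq:SecondCov} (the Fourier covariance gives the $\log(n)^2$ rate; the barrier start time $\log(n)^2$ gives the $(\alpha-\gamma)^2/2$ rate), but the route you take to extract them requires path-level control that the paper deliberately sidesteps.
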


Before proving the lemmata, we show how they imply \cref{eq:L2PartConv,eq:L2PartConvSec,eq:L1PartConv} (in reverse order). We use the independence of $(M^{(i)}_{n,\ta}(w))_{i\in\{1,2\},w\in\mathcal{N}_n(v)}$ from $ (e^{\gamma \tS_{n,\tg}(w)}\tZ_{n,\gamma,\tg}(w)^{-1})_{w\in\mathcal{N}_n(v)}$ repeatedly, referring to this fact as ``generational independence''.

\begin{proof}[Proof of \cref{eq:L1PartConv}]
Using $\E[e^{\gamma \tS_{n,\tg}(w)}\tZ_{n,\gamma,\tg}(w)^{-1}] =1$, generational independence, as well as Lemma \ref{Lem:FirstMoment}, we obtain the first moment bound
\begin{multline*}
\mathbb{E}\bigg|\sum_{w\in\mathcal{N}_n(v)} |I_n(w)| \, \bar{M}_{n,\ta}^{(2)}(w)e^{\gamma \tS_{n,\tg}(w)}\tZ_{n,\gamma,\tg}(w)^{-1}\bigg| \\
\le \sum_{w\in\mathcal{N}_n(v)} |I_n(w)| 2^{-\frac{(\alpha-\gamma)^2}{2}\log(n)^2} = |I_m(v)| 2^{-\frac{(\alpha-\gamma)^2}{2}\log(n)^2} \,.
\end{multline*}
Since $m$ is fixed, the Markov inequality and Borel-Cantelli yield  \cref{eq:L1PartConv}.
\end{proof}

\begin{proof}[Proof of \cref{eq:L2PartConvSec}] 
We again proceed by bounding the first moment. The decay will follow from the $\tS_{n,\tg}(w)$ term and the complement of its barrier event, so we crudely bound $\E |\bar{M}_{n,\ta}^{(1)}(w)| \leq 2\E[M_{n,\ta}(w)] = 2$ to obtain:
\begin{multline*}
  \mathbb{E}\bigg|\sum_{w\in\mathcal{N}_n(v)} |I_n(w)| \bar{M}_{n,\ta}^{(1)}(w)e^{\gamma \tS_{n,\tg}(w)}\tZ_{n,\gamma,\tg}(w)^{-1}\indset{B_{\log(n)^{1.5},n}^{\alpha}(w)^c}\bigg| \\
  \le 2|I_m(v)| \max_{w\in\mathcal{N}_n(v)}\mathbb{E}\Big[e^{\gamma \tS_{n,\tg}(w)}\tZ_{n,\gamma,\tg}(w)^{-1}\indset{B_{\log(n)^{1.5},n}^{\alpha}(w)^c}\Big]\,.
\end{multline*}
Next, we decompose the complement of the barrier event based on what generation $\tS_{\cdot, \tg}(w)$ crosses the barrier. A union bound on the right-hand side above yields
\begin{multline}\label{eq:L2PartConvSecStep1}
 2|I_m(v)| \max_{w\in\mathcal{N}_n(v)} \sum_{\ell=\log(n)^{1.5}+1}^n \mathbb{E}\Big[\indset{A_\ell^\alpha(w)} e^{\gamma \tS_{n,\tg}(w)}\tZ_{n,\gamma,\tg}(w)^{-1}\Big] \\
  = 2|I_m(v)|\max_{w\in\mathcal{N}_n(v)} \sum_{\ell=\log(n)^{1.5}+1}^n \mathbb{E}\Big[\indset{A_\ell^\alpha(w)}e^{\gamma \tS_{\ell,\tg}(w)}\tZ_{\ell,\gamma,\tg}(w)^{-1}\Big] \,,
\end{multline}
where the equality comes from the independence of $\tS_{n,\tg}(w)-\tS_{\ell,\tg}(w)$ from $\tS_{\ell,\tg}(w)$, which also gives
\[
  \mathbb{E}\Big[e^{\gamma (\tS_{n,\tg}(w)-\tS_{\ell,\tg}(w))}\Big] = \tZ_{n,\gamma,\tg}(w)\tZ_{\ell,\gamma,\tg}(w)^{-1} \,.
\]
By definition, 
\begin{equation}\label{eq:IndBadBoundExp}
\indset{A_\ell^\alpha(w)} \le e^{(\alpha-\gamma) \tS_{\ell,\tg}(w)-(\alpha-\gamma)\alpha \log(2)\ell}.
\end{equation}
Plugging \cref{lem:junnila-lemma-7} and \cref{eq:IndBadBoundExp} into \cref{eq:L2PartConvSecStep1}
yields
\begin{multline*}
  \mathbb{E}\bigg|\sum_{w\in\mathcal{N}_n(v)} |I_n(w)| \, \bar{M}_{n,\ta}^{(1)}(w)e^{\gamma \tS_{n,\tg}(w)}\tZ_{n,\gamma,\tg}(w)^{-1}\indset{B_{\log(n)^{1.5},n}^{\alpha}(w)^c}\bigg|\\
  \begin{aligned}
  &\le C\max_{w\in\mathcal{N}_n(v)} \sum_{\ell=\log(n)^{1.5}+1}^n \mathbb{E}\big[ e^{\alpha\tS_{\ell,\tg}(w)}\big]e^{-(\alpha-\gamma)\alpha \log(2) \ell-\frac{\gamma^2}{2}\log(2) \ell}\\
  &\le C\sum_{\ell=\log(n)^{1.5}+1}^n 2^{-\frac{(\alpha-\gamma)^2}{2}\ell} \leq Ce^{-\frac{(\alpha-\gamma)^2}{2}\log(n)^{1.5}}\,.
  \end{aligned}
\end{multline*}
\Cref{eq:L2PartConvSec} now follows from the Markov inequality and Borel-Cantelli.
\end{proof}

\begin{proof}[Proof of \cref{eq:L2PartConv}] To shorten the displays herein, we introduce the notation
\[
\widetilde{E}_{n,\gamma,\tg}(w) := e^{\gamma \tS_{n,\tg}(w)}\tZ_{n,\gamma,\tg}(w)^{-1}\,,\qquad \widetilde{E}_{(k,n],\gamma,\tg}(w) := e^{\gamma\tS_{(k,n],\tg}(w)}\tZ_{(k,n],\gamma,\tg}(w)^{-1},
\]
for $k,n\in\N$, $k\le n$, $w\in\mathcal{N}_n$.  We analogously define $\widetilde{E}_{n,\gamma,\ta}(w)$, $\widetilde{E}_{(k,n],\gamma,\ta}(w)$.

We proceed by bounding the second moment of the sum in \cref{eq:L2PartConv}. Our strategy will be to repartition the sum over $\mathcal{N}_n(v)$ such that the $\bar{M}_{n,\ta}^{(1)}(w)$ in each partition have small correlation. Towards this, keeping in mind Lemma~\ref{Lem:CovarTerm}, we write the disjoint union 
\[
    \mathcal{N}_n(v) = \bigsqcup_{j=1}^{Cf(n)^2} \mathcal{N}_{n,j}(v)
\]
in such a way that  any $w_1\neq w_2$ in $\mathcal{N}_{n,j}(v)$ satisfies $1/8\ge |t_n(w_1)-t_n(w_2)| \ge 2^{-n}f(n)$. Further decomposing according to the last level $h\le \log(n)^{1.5}$ at which $\tS_{h,\tg}(w)> \alpha \log(2)h$, 
we then express the pre-limit in the left-hand side of \cref{eq:L2PartConv} as
\begin{align*}
\sum_{j=1}^{Cf(n)^2} \sum_{h=0}^{\log(n)^{1.5}} \sum_{w\in\mathcal{N}_{n,j}(v)}  |I_{n}(w)| \, \bar{M}_{n,\ta}^{(1)}(w)\widetilde{E}_{n,\gamma,\tg}(w)\indset{A_h^\alpha(w)}\indset{B_{h,n}^{\alpha}(w)} \,.
\end{align*}
We now compute the second moment of the above. Below, we use the fact $(\sum_{i=1}^N |x_i|)^2 \le N\sum_{i=1}^N |x_i|^2$ and the triangle inequality to pull the sums over $h$ and $j$ out of the square:
\begin{align}\label{eq:ToControllSecondMoment}
    &\mathbb{E} \, \Bigg|\sum_{j=1}^{Cf(n)^2} \sum_{h=0}^{\log(n)^{1.5}}\sum_{w\in\mathcal{N}_{n,j}(v)}  |I_{n}(w)| \, \bar{M}_{n,\ta}^{(1)}(w)\widetilde{E}_{n,\gamma,\tg}(w)\indset{A_h^\alpha(w)}\indset{B_{h,n}^{\alpha}(w)}\Bigg|^2 \nonumber\\
    &\qquad\le Cf(n)^2\log(n)^{1.5} \nonumber \\
    &\quad\qquad \times\sum_{j=1}^{Cf(n)^2}\sum_{h=0}^{\log(n)^{1.5}} \mathbb{E}\, \Bigg|\sum_{w\in\mathcal{N}_{n,j}(v)}  |I_{n}(w)| \, \bar{M}_{n,\ta}^{(1)}(w)\widetilde{E}_{n,\gamma,\tg}(w)\indset{A_h^\alpha(w)}\indset{B_{h,n}^{\alpha}(w)}\Bigg|^2 \nonumber\\
    &\qquad\le Cf(n)^4\log(n)^{3}\bigg( \max_{j\le Cf(n)^2,h \le \log(n)^{1.5}}
    \mathsf{Diag}(h,j)
    + \max_{j\le Cf(n)^2, h\le \log(n)^{1.5}}\mathsf{Cross}(h,j) \bigg)
    \end{align}
where
\[
    \mathsf{Diag}(h,j) := 
    \sum_{w\in\mathcal{N}_{n,j}(v)} |I_{n}(w)|^2 \, \mathbb{E}\Big[\bar{M}_{n,\ta}^{(1)}(w)^2\Big] \, \mathbb{E}\Big[\widetilde{E}_{n,\gamma,\tg}(w)^2\indset{A_h^\alpha(w)}\indset{B_{h,n}^{\alpha}(w)} \Big]
\]
and
\begin{multline*}
    \mathsf{Cross}(h,j):= \sum_{w_1\neq w_2 \in\mathcal{N}_{n,j}(v)} |I_{n}(w_1)| \, |I_{n}(w_2)| \, \mathbb{E}\Big[\bar{M}_{n,\ta}^{(1)}(w_1)\bar{M}_{n,\ta}^{(1)}(w_2) \Big] \\
    \times \mathbb{E}\Big[\widetilde{E}_{n,\gamma,\tg}(w_1) \widetilde{E}_{n,\gamma,\tg}(w_2)\indset{A_h^\alpha(w_1)\cap B_{h,n}^{\alpha}(w_1)}\indset{A_h^\alpha(w_2)\cap B_{h,n}^{\alpha}(w_2)} \Big] \,. 
\end{multline*}
We begin by bounding $\mathsf{Diag}(h,j)$. Let $\mathcal{F}_{\ell}$ be the filtration generated by all random variables up to the $l^{\mathrm{th}}$ level in the tree.
Conditioning on $\cF_h$, then applying \cite[Lemma 10]{Junnila} and \cref{lem:junnila-lemma-7} yields that, for all $j\le Cf(n)^2$, $h\le\log(n)^{1.5}$, and $w\in \mathcal{N}_{n,j}(w)$, we have 
\begin{align*}
    \mathbb{E}\Big[\widetilde{E}_{n,\gamma,\tg}(w)^2 \indset{A_h^\alpha(w)\cap B_{h,n}^{\alpha}(w)} \Big]
    &= \E\Big[\widetilde{E}_{h,\gamma,\tg}(w)^2 \indset{A_h^\alpha(w)} \E\Big[\widetilde{E}_{(h,n],\gamma,\tg}(w)^2 \indset{B_{h,n}^{\alpha}(w)} \given \cF_h\Big]\Big]
    \\
    &\le C \mathbb{E}\Big[\widetilde{E}_{h,\gamma,\tg}(w)^2\Big] 2^{(\gamma^2-\frac{(2\gamma-\alpha)^2}{2})(n-h)}\\
    &\le C 2^{\frac{\gamma^2}{2}n+\varepsilon(\alpha)n + \gamma^2\log(n)^{1.5}} \,,
\end{align*}
where $\ep(\alpha)\searrow 0$ as $\alpha\searrow \gamma$, and we have taken $\alpha$ close to $\gamma$. This with  \cref{Lem:SecondMoment} yields
\begin{align}\label{eq:DiagTerm}
    \max_{j\le Cf(n)^2,h \le \log(n)^{1.5}}
    \mathsf{Diag}(h,j) 
    &\leq C|\cN_{n,j}(v)| 2^{-2n} f(n)^{-2}  \log (n)^2  2^{\frac{\gamma^2}{2}n+\varepsilon(\alpha)n +2\gamma^2\log(n)^{2}} \nonumber \\
    &\leq 
    C 2^{(\frac{\gamma^2}{2}+\varepsilon(\alpha) -1)n + 2\gamma^2 \log(n)^2}\,,
\end{align}
where in the second inequality we used $|\cN_{n,j}(v)| \leq T_n$, followed by \cref{eqn:size-Tn-Inj}.

Next, we control $\mathsf{Cross}(h,j)$. For this, we further partition the sum with respect to the distance between $w_1$ and $w_2$, which warrants the definition
\begin{align*}
\mathcal{D}_{\ell,j} &:= \big\{(w_1,w_2) \in \mathcal{N}_{n,j}(v) : |t(w_1)-t(w_2)| \in [2^{-\ell-1},2^{-\ell}] \big\} \,.
\end{align*}
Set $\ell_0 := \ell_0(n) = \min\{ \ell \in\N : 2^{-\ell} < 2^{-n}f(n)\}$. By definition of $\mathcal{N}_{n,j}(v)$ we have 
\[
\{w_1 \neq w_2 \in \cN_{n,j}(v) \}= \bigsqcup_{\ell=3}^{\ell_0} \mathcal{D}_{l,j}.
\]
To shorten expressions, introduce also 
\[
    \widetilde{\cB}_{s,\gamma,\tg}^{r,\alpha}(w) := \widetilde{E}_{s,\gamma,\tg}(w) \indset{A_r^\alpha(w)\cap B_{r,s}^{\alpha}(w)}\,.
\]
Then, fixing $j\le Cf(n)^2$ and $h\le \log(n)^{1.5}$, we have
\begin{multline}\label{eq:ScaleDecomp}
 \mathsf{Cross}(h,j)=\\
    \sum_{\ell=3}^{\ell_0} \smashoperator[r]{\sum_{(w_1,w_2)\in \mathcal{D}_{\ell,j}}} |I_{n}(w_1)| \, |I_{n}(w_2)|\mathbb{E}\Big[\bar{M}_{n,\ta}^{(1)}(w_1)\bar{M}_{n,\ta}^{(1)}(w_2) \Big]
    \mathbb{E}\Big[\widetilde{\cB}_{n,\gamma,\tg}^{h,\alpha}(w_1)\widetilde{\cB}_{n,\gamma,\tg}^{h,\alpha}(w_2)\Big].
\end{multline}
 
We first consider summands with $\ell\ge h$ in \cref{eq:ScaleDecomp} (note for $\ell>\log(n)^{1.5}$, this is automatic). For any $(w_1, w_2)\in \mathcal{D}_{\ell,j}$, we find
\begin{align}\label{eq:DistanceSmallerGood1}
    \mathbb{E}\Big[&\widetilde{\cB}_{n,\gamma,\tg}^{h,\alpha}(w_1)
    \widetilde{\cB}_{n,\gamma,\tg}^{h,\alpha}(w_2) \Big]\nonumber \\
     &= \mathbb{E}\Big[\widetilde{\cB}_{\ell,\gamma,\tg}^{h,\alpha}(w_1)\widetilde{\cB}_{\ell,\gamma,\tg}^{h,\alpha}(w_2)
    \, \mathbb{E}\Big[\widetilde{E}_{(\ell,n],\gamma,\tg}(w_1)\widetilde{E}_{(\ell,n],\gamma,\tg}(w_2)\indset{B_{\ell+1,n}^{\alpha}(w_1)\cap B_{l+1,n}^{\alpha}(w_2)}  | \mathcal{F}_\ell\Big] \Big] \nonumber\\
    &\le  C\mathbb{E}\Big[\widetilde{\cB}_{\ell,\gamma,\tg}^{h,\alpha}(w_1)\widetilde{\cB}_{\ell,\gamma,\tg}^{h,\alpha}(w_2)\Big]\,,
\end{align}
where the last step uses \cite[Lemma~9]{Junnila} and  $|t_n(w_1)-t_n(w_2)|\le 2^{-h}$.
Conditioning on $\mathcal{F}_h$ then applying \cite[Lemma~10]{Junnila} gives
\begin{equation}\label{eq:DistanceSmallerGood2}
\begin{aligned}
\mathbb{E}\Big[\widetilde{\cB}_{\ell,\gamma,\tg}^{h,\alpha}(w_1)\widetilde{\cB}_{\ell,\gamma,\tg}^{h,\alpha}(w_2)\Big]
&\le \mathbb{E}\Big[ \widetilde{E}_{h,\gamma,\tg}(w_1)\widetilde{E}_{h,\gamma,\tg}(w_2)\Big]2^{(\gamma^2-\frac{(2\gamma-\alpha)^2}{2})(\ell-h)} \,.
\end{aligned}
\end{equation}
\cref{lem:junnila-lemma-7} bounds the  right-hand side of the above display by 
\begin{equation}\label{eq:DistanceSmallerGood3}
\begin{aligned}
    C2^{\gamma^2h}2^{(\gamma^2-\frac{(2\gamma-\alpha)^2}{2})(\ell-h)}
  \le C2^{\gamma^2\log(n)^{1.5}+(\gamma^2-\frac{(2\gamma-\alpha)^2}{2})\ell} \,.
\end{aligned}
\end{equation}

Next, we consider summands in \cref{eq:ScaleDecomp} with $\ell\le h-1$. 
We proceed as in \cref{eq:DistanceSmallerGood1,eq:DistanceSmallerGood2,eq:DistanceSmallerGood3}, this time conditioning on  $\mathcal{F}_{h+1}$ and using $|t_n(w_1)-t_n(w_2)|\ge 2^{-\ell-1} \ge 2^{-h-2}$ in \cite[Lemma~9]{Junnila}: 
\begin{equation}\label{eq:DistanceBiggerGood}
\begin{aligned}
    \mathbb{E}\Big[\widetilde{\cB}_{n,\gamma,\tg}^{h,\alpha}(w_1)\widetilde{\cB}_{n,\gamma,\tg}^{h,\alpha}(w_2)\Big]\le \mathbb{E}\Big[\widetilde{E}_{h+1,\gamma,\tg}(w_1)\widetilde{E}_{h+1,\gamma,\tg}(w_2)\Big] \le C2^{\gamma^2 (h+1)}\le C2^{\gamma^2\log(n)^{1.5}}.
\end{aligned}
\end{equation}

Combining \cref{eq:DistanceSmallerGood1}--\cref{eq:DistanceBiggerGood} yields that, for all $h\le \log(n)^{1.5}$, $j\le Cf(n)^2$, $(w_1,w_2)\in\mathcal{D}_{\ell,j}$, and $\ell \in [3,\ell_0]$,
\begin{equation}\label{eq:AllDistancesGood}
    \mathbb{E}\Big[\widetilde{\cB}_{n,\gamma,\tg}^{h,\alpha}(w_1)
    \widetilde{\cB}_{n,\gamma,\tg}^{h,\alpha}(w_2) \Big]
    \le C2^{\gamma^2\log(n)^{1.5}+(\gamma^2-\frac{(2\gamma-\alpha)^2}{2})\ell}\,.
\end{equation}
Substituting \cref{eq:AllDistancesGood} and \cref{eqn:size-Tn-Inj} into \cref{eq:ScaleDecomp} yields, for $j\leq Cf(n)^2, h\leq \log(n)^{1.5}$ arbitrary 
\begin{multline*}
    \mathsf{Cross}(h,j) \\
    \le C\sum_{\ell=3}^{\ell_0} \sum_{(w_1,w_2)\in\mathcal{D}_{\ell,j}} 2^{-2n}f(n)^{-2} \mathbb{E}\Big[\bar{M}_{n,\ta}^{(1)}(w_1)\bar{M}_{n,\ta}^{(1)}(w_2)\Big] 2^{\gamma^2\log(n)^{1.5}}2^{(\gamma^2-\frac{(2\gamma-\alpha)^2}{2})\ell}\,.
\end{multline*}
Observe for $\ell\le \log(n)^2$ and $(w_1,w_2)\in\mathcal{D}_{\ell,j}$ we have 
\[
|t(w_1)-t(w_2)|\ge 2^{-\log(n)^2-1} \ge 2^{-n}2^{\log(n)^2}
\] for $n$ sufficiently big, so that \cref{Lem:CovarTerm} yields
\begin{align*}
    \mathsf{Cross}(h,j)
    &\leq C\sum_{\ell=3}^{\log(n)^2} |\mathcal{D}_{\ell,j}| \, 2^{-2n}f(n)^{-2} 2^{-\frac{(\alpha-\gamma^2)}{2}\log(n)^2+\gamma^2\log(n)^{1.5}+(\gamma^2-\frac{(2\gamma-\alpha)^2}{2})\ell}\\
    &\qquad+C\sum_{\ell=\log(n)^2+1}^{\ell_0} |\mathcal{D}_{\ell,j}| \, 2^{-2n}f(n)^{-2} n^{-2}2^{\gamma^2\log(n)^{1.5}+(\gamma^2-\frac{(2\gamma-\alpha)^2}{2})\ell} \,.
\end{align*}
Furthermore, by Lemma~\ref{Lem:SubTreeSize}, we have $|\mathcal{D}_{\ell,j}| \le T_n^2 2^{-\ell}= 2^{2n}f(n)^22^{-\ell}$, which yields
\begin{equation}\label{eq:CrossTermControl}
\begin{aligned}
\mathsf{Cross}(h,j) \leq C 2^{-\eta \log(n)^2}\,,
\end{aligned}
\end{equation}
for some $\eta>0$ obtained by choosing $\alpha$ close enough to $\gamma$ so that $\gamma^2-\frac{(2\gamma-\alpha)^2}{2}-1<0$. 

In light of \cref{eq:DiagTerm}, choose also $\alpha$ close enough to $\gamma$ so that $\frac{\gamma^2}{2}+\varepsilon(\alpha)-1<0$, which is possible since $\gamma<\sqrt{2}$. Then substituting \cref{eq:DiagTerm}, \cref{eq:CrossTermControl}, and Lemma~\ref{Lem:SecondMoment} into \cref{eq:ToControllSecondMoment} yields
\begin{align*}
&\mathbb{E} \, \Bigg|\sum_{j=1}^{Cf(n)^2} \sum_{h=0}^{\log(n)^{1.5}}\sum_{w\in\mathcal{N}_{n,j}(v)}  |I_{n}(w)| \, \bar{M}_{n,\ta}^{(1)}(w)\widetilde{E}_{n,\gamma,\tg}(w)\indset{B_{\log(n)^{1.5},n}^{\alpha}(w)}\Bigg|^2 \leq C2^{-\frac{\eta}{2} \log(n)^2}\,.
\end{align*}
We use the last display to conclude \cref{eq:L2PartConv} by Markov's inequality and Borel-Cantelli.
\end{proof}

\subsubsection{Proofs of \cref{Lem:SecondMoment,Lem:FirstMoment,Lem:CovarTerm}} Towards \cref{eq:IntegralLimit}, it remains to prove \cref{Lem:SecondMoment,Lem:FirstMoment,Lem:CovarTerm}.

\begin{proof}[Proof of Lemma \ref{Lem:SecondMoment}]
It is enough to show the bound for $\mathbb{E}[M_{n,\ta}^{(1)}(w)^2]$, since the first moment is bounded by 1 by comparison to $M_{n,\ta}(w)$. We plan to decompose $M_{n,\ta}^{(1)}(w)$ according to the last generation $h$ (counting forward from $n$) at which the field at $t$ was more than $\alpha$ thick. Roughly speaking, the only contribution comes from the value of the field at generation $h$.
For this purpose, we set $h_n := n+h+1$ and
\begin{equation}\label{eq:GoodAndBad}
\begin{aligned}
  A_{h;n}^\alpha(t) &:= \big\{\tS_{(n,n+h],\ta}(t) > \alpha \log(2)h\big\}\,.
\end{aligned}
\end{equation}
This allows us to write
\begin{align*}
M_{n,\ta}^{(1)}(w) &= \lim\limits_{r\to\infty} |I_n(w)|^{-1}\sum_{h = 0}^{\log(n)^2}\int_{I_n(w)} \indset{A_{h;n}^\alpha(t)\cap B_{h,r;n}^\alpha(t)} \widetilde{E}_{(n,n+r],\gamma,\ta}(v_{n+r}(t))\;\mathrm{d}t\\
&= \lim\limits_{r\to\infty} |I_n(w)|^{-1}\sum_{h = 0}^{\log(n)^2} \sum_{u\in\mathcal{N}_{h_n}(w)} \int_{I_{h_n}(u)} \indset{A_{h;n}^\alpha(t) \cap B_{h,r;n}^\alpha(t)} \widetilde{E}_{(n,n+r],\gamma,\ta}(v_{n+r}(t))\;\mathrm{d}t,
\end{align*}
where in the second step we have decomposed $I_n(w)$ into its subintervals in generation $h_n$. Applying Fatou's lemma and Fubini's theorem now yields 
\begin{multline}
\label{eq:SecMomFubini}
  \mathbb{E}\Big[M_{n,\ta}^{(1)}(w)^2 \Big]\le \liminf\limits_{r\to\infty} |I_n(w)|^{-2} \log(n)^2   \\
  \times \max_{h\le \log(n)^2, u\in\mathcal{N}_{h_n}(w)}|\mathcal{N}_{h_n}(w)|^2 \, \mathbb{E}\bigg|\int_{I_{h_n}(u)} \indset{A_{h;n}^\alpha(t) \cap B_{h,r;n}^\alpha(t)} \widetilde{E}_{(n,n+r],\gamma,\ta}(v_{n+r}(t))
  \d t\bigg|^2 \,.
\end{multline}
Let $\mathfrak{T}(w)$ denote the subtree of $\mathfrak{T}$ below $w$. In our embedding of $\mathfrak{T}$ into $[0,1]$, the subtree $\mathfrak{T}(w)$ embeds into the interval $I_n(w)$ of length (roughly \cref{eqn:size-Tn-Inj}) $2^{-n}f(n)^{-1}$.  We now embed $\mathfrak{T}(w)$ into $[0,1]$ by scaling space; this way $(\tS_{(n,n+r]}(t))_{r\in\N, t\in[0,1]}$ is $\log$-correlated in the sense of \cref{rk:junnila-estimates}, and in particular the results of \cite{Junnila} apply. Letting $\lambda_w$ denote the rescaled Lebesgue measure, we have $\lambda_w(I_{h_n})\le 2^{-h-1}$. We now apply  \cite[Proposition 11]{Junnila} to the second moment on the right-hand  of \cref{eq:SecMomFubini} to see that, for all $h\le\log(n)^2$and $u\in\mathcal{N}_{h_n}(w)$,
\begin{multline}\label{eq:DyadLengthIntervalSecMom}
  \mathbb{E}\bigg|\int_{I_{h_n}(u)} \indset{A_{h;n}^\alpha(t)\cap B_{h,r;n}^\alpha(t)} e^{\gamma \tS_{(n,n+r],\ta}(t)}\tZ_{(n,n+r],\gamma,\ta}(t)^{-1}\;\mathrm{d}t\bigg|^2  \\
  \le 2^{-2h}|I_n(w)|^2 \mathbb{E}\bigg[\sup_{t\in I_{h_n}(u)} e^{2\gamma \tS_{(n,n+h],\ta}(t)}\tZ_{(n,n+h],\gamma,\ta}(t)^{-2}\bigg]\,.
\end{multline}
By definition of the model, $\tS_{(n,n+h],\ta}(t) = \tS_{(n,n+h],\ta}(u)$ is constant for all $t\in I_{h_n}(u)$, which together with \cref{lem:junnila-lemma-7} yields
\begin{equation}\label{eq:DyadLengthIntervalSecMomSub}
2^{-2h}|I_n(w)|^2 \mathbb{E}\bigg[\sup_{t\in I_{n+h+1}(u)} 
e^{2\gamma \tS_{(n,n+h],\ta}(t)}\tZ_{(n,n+h],\gamma,\ta}(t)^{-2}
\bigg] \le C2^{-2h+\gamma^2h} |I_n(w)|^2\,.
\end{equation}
 Plugging \cref{eq:DyadLengthIntervalSecMom} and \cref{eq:DyadLengthIntervalSecMomSub}  into \cref{eq:SecMomFubini}  yields
 \begin{align*}
\mathbb{E}\Big[M_{n,\ta}^{(1)}(w)^2 \Big] &\le C\limsup\limits_{r\to\infty}  \log(n)^2 \max_{h\le \log(n)^2} 2^{-2h+\gamma^2h} |\cN_{h_n}(w)|\,.
\end{align*}
The lemma then follows from the bound on $|\cN_{h_n}(w)|$ provided by \cref{eq:FirstStateSubTreeSize}.
\end{proof}

\begin{proof}[Proof of Lemma \ref{Lem:FirstMoment}]
Recall the notation introduced in \cref{eq:GoodAndBad}. A union bound over the set of generations at which the field first becomes more than $\alpha$ thick simply yields
\begin{align*}
M_{n,\ta}^{(2)}(w) \leq  \lim\limits_{r\to\infty} \sum_{\ell=\log(n)^2}^{r} |I_n(w)|^{-1}\int_{I_n(w)} \indset{A_{\ell;n}^{\alpha}(t)}e^{\gamma \tS_{(n,n+r],\ta}(t)}\tZ_{(n,n+r],\gamma,\ta}(t)^{-1}\;\mathrm{d}t\,.
\end{align*}
By generational independence and~\cref{eqn:junnila-lemma-7}, we get 
\[\tZ_{(n,n+r],\hskip-0.252pt\gamma,\ta}(t) \le Ce^{\log(2)\frac{\gamma^2}{2}l}\tZ_{(n+\ell,n+r],\gamma,\ta}(t)\,.\] Moreover, by definition, we have
\[
  \indset{A_{\ell;n}^{\alpha}(t)} \le e^{(\alpha-\gamma) \tS_{\ell,\ta}(t)-(\alpha-\gamma)\alpha\log(2)\ell}\,.
\]
Substituting into the second-to-last display then applying Fatou's lemma, Fubini's theorem, and \cref{lem:junnila-lemma-7} yield
\[
\mathbb{E}\left[M_{n,\ta}^{(2)}(w)\right] \le C\sum_{\ell=\log(n)^2}^\infty 2^{\frac{\alpha^2}{2}\ell-(\alpha-\gamma)\alpha \ell-\frac{\gamma^2}{2}\ell} 
\le C2^{-\frac{(\alpha-\gamma)^2}{2}\log(n)^2}\,.\qedhere
\]
\end{proof}

\begin{proof}[Proof of Lemma \ref{Lem:CovarTerm}]
First, we control the $\mathbb{E}[M_{n,\ta}^{(1)}(w_1)M_{n,\ta}^{(1)}(w_2)]$ term. For this we set $D:=I_{n}(w_1)\times I_{n}(w_2)$. Using Fatou's lemma and Fubini's theorem, and bounding indicators by $1$, we have
\begin{align}\label{eq:CovCalcMainTerm}
&\mathbb{E}[M_{n,\ta}^{(1)}(w_1)M_{n,\ta}^{(1)}(w_2)]  \\
&\le \liminf\limits_{r\to\infty} |D|^{-1} \int_{D} \mathbb{E}\left[e^{\gamma \tS_{(n,n+r],\ta}(t)+\gamma \tS_{(n,n+r],\ta}(s)}\tZ_{(n,n+r],\gamma,\ta}(t)^{-1}\tZ_{(n,n+r],\gamma,\ta}(s)^{-1} \right] \d t \d s \,.\nonumber
\end{align}
We now claim that Lemma~\ref{lem:junnila-lemma-7} implies
\begin{multline}\label{eq:CovCalcCrossTermWrittenOut}
\mathbb{E}\Big[e^{\gamma \tS_{(n,n+r],\ta}(t)+\gamma \tS_{(n,n+r],\ta}(s)}\tZ_{(n,n+r],\gamma,\ta}(t)^{-1}\tZ_{(n,n+r],\gamma,\ta}(s)^{-1} \Big] \\
  =e^{\gamma^2 \sum_{h=1}^{r} \Cov[\ta_{n+h}(t), \ta_{n+h}(s)]+O_n(2^{-n/2})}\,.
\end{multline}
To see how Lemma 2.2 applies here,
recall that $\widetilde{S}_{[n,n+r], \widetilde{a}}(t)= \widetilde{S}_{n+r, \widetilde{a}}(t) - \widetilde{S}_{n, \widetilde{a}}(t)$. Re-defining $\varphi(\gamma_1,\gamma_2)$ in the fourth display of the proof of Lemma 2.2 so that the lowest index of $k$ is $2^n$ instead of $1$ and the highest index of $k$ is $2^{n+r}-1$ instead of $2^n-1$, we have $\varphi(\gamma, \gamma)$ equals the left-hand side of \eqref{eq:CovCalcCrossTermWrittenOut}. Following the remainder of the proof reveals that the resulting $\widetilde{\zeta}(\gamma,\gamma,t,s)$ term is bounded by $O_n(2^{-n/2})$ uniformly in all parameters.

Following the calculation in \cite[Section~3.1, Proof of Theorem 1]{Junnila} until the bound $|R_n(t)| \leq \frac{1}{(n+1)\sin(2\pi t)}$ obtained in the last line of that proof, we similarly have (where their $n$, $t$, and $X_k(x)$ correspond to $2^{n+h}-1$, $t-s$, and $X_k(t)$ respectively for us) for $(t,s)\in D$ (and recalling that $v_n(t) = w_1$, $v_n(s) = w_2$),
\begin{align}\label{eq:CovarBoundGenn+h}
  \Cov[\ta_{n+h}(t), \ta_{n+h}(s)] \le 2^{-n-h} \sin\left(2\pi |t-s| \right)^{-1} 
  \le C2^{-n-h} |t_n(w_1)-t_n(w_2)|^{-1} \,,
\end{align}
where we used the assumption that $|t_n(w_1)-t_n(w_2)|\ge 2^{-n}$, which is much larger than $\max(|t-t_n(w_1)|, |s-t_n(w_2)|)$ in light of \cref{eqn:size-Tn-Inj}.
Plugging \cref{eq:CovCalcCrossTermWrittenOut} and \cref{eq:CovarBoundGenn+h} into \cref{eq:CovCalcMainTerm} yields
\begin{align*}
\mathbb{E}\Big[M_{n,\ta}^{(1)}(w_1)M_{n,\ta}^{(1)}(w_2)\Big] &\le |D|^{-1}\int_{D} e^{C\gamma^2 2^{-n} |t_n(w_1)-t_n(w_2)|^{-1}+O_n(2^{-n/2})} \d t \d s\\
&\le e^{C\gamma^2 2^{-n} |t_n(w_1)-t_n(w_2)|^{-1}+O_n(2^{-n/2})}.
\end{align*}
Thus, we get
\begin{align*}
\mathbb{E}\Big[ \bar{M}_{n,\ta}^{(1)}(w_1)\bar{M}_{n,\ta}^{(1)}(w_2)\Big] \le e^{C\gamma^2 2^{-n}|t_n(w_1)-t_n(w_2)|^{-1}+O_n(2^{-n/2})}-\mathbb{E}[M_{n,\ta}^{(1)}(w_1)]\mathbb{E}[M_{n,\ta}^{(1)}(w_2)].
\end{align*}
From Lemma \ref{Lem:FirstMoment} and $\E[M_{n,\ta}^{(1)}(w)]+\E[M_{n,\ta}^{(2)}(w)] =1$, we have
\[
\mathbb{E}\Big[M_{n,\ta}^{(1)}(w_1)\Big]\mathbb{E}\Big[M_{n,\ta}^{(1)}(w_2)\Big] \ge \Big(1-C2^{-\frac{(\alpha-\gamma)^2}{2}\log(n)^2}\Big)^2 \ge 1-2C\cdot 2^{-\frac{(\alpha-\gamma)^2}{2}\log(n)^2}\,.
\]
Thus, absorbing the factor $2$ into the generic constant $C$,
\begin{align}\label{eq:CovFirstResult}
\mathbb{E}\Big[ \bar{M}_{n,\ta}^{(1)}(w_1)\bar{M}_{n,\ta}^{(1)}(w_1)\Big] &\le e^{\gamma^2 2^{-n}|t_n(w_1)-t_n(w_2)|^{-1}+O_n(2^{-n/2})}-1+C2^{-\frac{(\alpha-\gamma)^2}{2}\log(n)^2} \nonumber\\
&\le C\Big(2^{-n}|t_n(w_1)-t_n(w_2)|^{-1}+2^{-\frac{n}2}+2^{-\frac{(\alpha-\gamma)^2}{2}\log(n)^2}\Big) \,.
\end{align}
Since $f(n)\ge n^2$, we immediately conclude both statements of Lemma~\ref{Lem:CovarTerm}.
\end{proof}

\subsection{Proof of \eqref{it:RNUnifInt}: uniform integrability}
Similarly to the last section we only show that $(\widetilde{R}_{n,\gamma}(t))_{n\in\N}$ is almost surely uniformly integrable with respect to $\widetilde{\mu}_{\gamma,\ta}$, since the other statement can be shown analogously. By the coupling construction, we can control $\widetilde{R}_{n,\gamma}(t)$ for all $t$ which are ``consistently thick'', with an error depending on how late they start being consistently thick. Thus, conceptually we aim to show that points which only start being consistently $(\gamma-\delta)\log(2)$-thick at a late level do not contribute to $\widetilde{\mu}_{\gamma,\ta}$. We split the calculation into the $t$ which are at least $(\gamma-\delta)\log(2)$-thick at level $n$ and those who are not. Recall $\mathcal{T}_{n,\gamma}$ from \eqref{def:time-n-thick}. Say that $t\in \mathcal{T}_{n,\gamma}$ if and only if $v_n(t)\in\mathcal{T}_{n,\gamma}$, and set
\begin{align*}
\widetilde{R}_{n,\gamma,1}(t) := \ind{t\in \mathcal{T}_{n,\gamma}}\widetilde{R}_{n,\gamma}(t)\,,\\
\widetilde{R}_{n,\gamma,2}(t) := \ind{t\in \mathcal{T}_{n,\gamma}^c}\widetilde{R}_{n,\gamma}(t)\,.
\end{align*}
Since $\widetilde{R}_{n,\gamma}(t)= \widetilde{R}_{n,\gamma,1}(t)+ \widetilde{R}_{n,\gamma,2}(t)$, we are done upon showing  that $(\widetilde{R}_{n,\gamma,1}(t))_{n\in\N_0}$, $(\widetilde{R}_{n,\gamma,2}(t))_{n\in\N}$ are both uniformly integrable with respect to $\widetilde{\mu}_{\gamma,\ta}$. 

First, we show uniform integrability of $(\widetilde{R}_{n,\gamma,1}(t))_{n\in\N}$. For this, we fix  $\eta >0$, define
\[
J_{\eta,n} := \int_0^1 \widetilde{R}_{n,\gamma,1}^{1+\eta}\mathrm{d}\widetilde{\mu}_{\gamma,\ta}(t)\,,
\] 
and show that for sufficiently small $\eta$ we have the following $\mathbb{P}$-almost surely:
\begin{equation} \label{eq:GoalUnifInt}
\sup_{n\in\N_0} J_{\eta,n} <\infty\,.
\end{equation}

Since $\widetilde{R}_{n,\gamma,1}(t)$ is constant on $I_n(v)$ for each $v\in\mathcal{N}_n$, we have
\begin{align*}
J_{\eta,n} &= \sum_{v\in\mathcal{N}_n} \widetilde{\mu}_{\gamma,\ta}[I_n(v)] \widetilde{R}_{n,\gamma,1}(t_n(v))^{1+\eta}\\
&\le C\sum_{v\in\mathcal{N}_n}  \ind{v\in \mathcal{T}_{n,\gamma}}\widetilde{\mu}_{\gamma,\ta}[I_n(v)] e^{\gamma(1+\eta)(\tS_{n,\tg}(v)-\tS_{n,\ta}(v))},
\end{align*}
where in the second step, we used \cref{eqn:junnila-lemma-7} to  bound both $\tZ_{n,\gamma,\ta}(t)$ and $\tZ_{n,\gamma,\tg}(t)$ by $Ce^{\log(2)\frac{\gamma^2}{2}n}$.   
 
Now we decompose the sum according to the level $L\in \{1,\dots, n\}$ at which $v$ starts to be consistently thick, i.e., we set
\begin{multline*}
\mathcal{T}_{L,n,\gamma} :=\bigg\{v\in \mathcal{N}_n : \frac{\max\{\tS_{L-1,\ta}(v),\tS_{L-1,\tg}(v)\}}{\log(2)(L-1)} \le \gamma-\delta ,\\
 \forall_{k\in \{L,\dots, n\}}\, \frac{\max\{\tS_{L,\ta}(v),\tS_{L,\tg}(v)\}}{\log(2) k} > \gamma-\delta \bigg\}\,
\end{multline*}
and note that $\bigcup_{L=1}^n \mathcal{T}_{L,n,\gamma} = \mathcal{T}_{n,\gamma}$.   By Lemma \ref{Lem:CouplThick} and Borel-Cantelli, 
there exist $L^\ast <\infty$ $\P$-almost surely such that $\tg_r(v) - \ta_r(v) \leq r^{-2}$ for all $v\in\mathcal{T}_{L,n,\gamma}$ and $r\geq \max\{L^*,L+1\}$. 
Towards \cref{eq:GoalUnifInt}, we from now on fix $\omega\in \Omega^\ast$. We then have
\begin{align*}
J_{\eta,n} &\le C\sum_{L=0}^{L^\ast} \sum_{v\in \mathcal{T}_{L,n,\gamma}} \widetilde{\mu}_{\gamma,\ta}[I_n(v)] e^{\gamma(1+\eta)(\tS_{L^\ast,\tg}(v)-\tS_{L^\ast,\ta}(v))+\gamma(1+\eta)\sum_{r=L^\ast+1}^{n} r^{-2}}\\
&\qquad +C\sum_{L = L^\ast+1}^n \sum_{j\in \mathcal{T}_{L,n,\gamma}} \widetilde{\mu}_{\gamma,\ta}[I_n(v)] e^{\gamma(1+\eta)(\tS_{L,\tg}(v)-\tS_{L,\ta}(v))+\gamma(1+\eta)\sum_{r=L+1}^{n} r^{-2}}\,.
\end{align*}
We can upper bound the first sum in the previous display by
\[
\sup_{v\in\mathcal{N}_{L^\ast}} e^{\gamma(1+\eta)(\tS_{L^\ast,\tg}(v)-\tS_{L^\ast,\ta}(v))}\widetilde{\mu}_{\gamma,\ta}[ [0,1]]\,,
\]
which is $\P$-almost surely finite since $L^\ast$ and $\mathcal{N}_{L^\ast}$ are. Thus, it suffices to prove
\begin{align}\label{eqn:unif-int-goal1}
  \widetilde{J}_{\eta,n} :=\sum_{L = L^\ast+1}^n \sum_{v\in \mathcal{T}_{L,n,\gamma}} \widetilde{\mu}_{\gamma,\ta}[I_n(v)] e^{\gamma(1+\eta)(\tS_{L,\tg}(v)-\tS_{L,\ta}(v))}<\infty \quad \text{$\P$-almost surely.}
\end{align}
For $v\in\mathcal{T}_{L,n,\gamma}$, we have
$\tS_{L-1,\tg}(v) \le (\gamma-\delta)\log(2)L$,
so that
\[
  \widetilde{J}_{\eta,n} \le \sum_{L=L^\ast+1}^n\sum_{v\in\mathcal{T}_{L,n,\gamma}} \widetilde{\mu}_{\gamma,\ta}[I_n(v)] e^{-\gamma(1+\eta)\tS_{L,\ta}(v)+\gamma(1+\eta)\tg_{L}(v)}2^{\gamma(1+\eta)(\gamma-\delta)L}\,.
\]
Next, we note that for $w\in \mathcal{N}_L$ fixed and $v\in\mathcal{N}_n$ with $w \prec v$ we have $\tS_{L,\ta}(v) = \tS_{L,\ta}(w)$, $\tg_{L}(v) = \tg_{L}(w)$ and $\ind{v\in\mathcal{T}_{L,n,\gamma}}\le\ind{w\in \mathcal{T}_{L,\gamma}}$.
Thus, we can bound
\[
\widetilde{J}_{\eta,n}\le \sum_{L=L^\ast+1}^n \sum_{w\in\mathcal{N}_L} \ind{w\in\mathcal{T}_{L,\gamma}} \widetilde{\mu}_{\gamma,\ta}[I_L(w)]e^{-\gamma(1+\eta)\tS_{L,\ta}+\gamma(1+\eta)\tg_{L}(w)}2^{\gamma(1+\eta)(\gamma-\delta)L}.
\]
Now, for $w\in\mathcal{N}_L$, we define
\[
 M_{L,\ta}(w) := |I_{L}(w)|^{-1}\widetilde{\mu}_{\gamma,\ta}[I_{L}(w)]e^{-\gamma \tS_{L,\ta}(w)}\tZ_{L,\gamma,\ta}(w)
\]
and use that $\tZ_{L,\gamma,\ta}(w) \leq C2^{\frac{\gamma^2}{2}\log(L)}$  by \cref{eqn:junnila-lemma-7} to obtain
\begin{multline}
  \widetilde{J}_{\eta,n}  \le \\
C\sum_{L=L^\ast+1}^n \sum_{w\in\mathcal{N}_L}\ind{w\in\mathcal{T}_{L,\gamma}}|I_{L}(w)|\, M_{L,\ta}(w) e^{-\gamma\eta \tS_{L,\ta}(w)+\gamma(1+\eta)\tg_{L}(w)}2^{\gamma(1+\eta)(\gamma-\delta)L-\frac{\gamma^2}{2}L}.\label{eq:UnifIntZwischen}
\end{multline}
Using a union bound and Borel-Cantelli, there exist $c'>0$  and a random variable $L^{\ast\ast}\in (0,\infty)$ $\P$-almost surely
such that for all $L\ge L^{\ast\ast}$ and $w\in\mathcal{N}_{L}$, we have $\tS_{L,\ta}(w)\geq -c'L$. In particular, there exists $\mathcal{C}(\eta)$ taking values in $(0,\infty)$ $\P$-almost surely such that $\gamma \eta(\tS_{L,\ta}(w) +c'L) \geq \mathcal{C}(\eta)$ for all $L\in\N$. Then for any $\varepsilon>0$, we can choose $\eta$ small such that for all $n\in\N$, $L\le n$, $w\in\mathcal{T}_{L,\gamma}$, 
\begin{equation}\label{eq:UnifIntEtaSmall}
\gamma\eta\tS_{L,\ta}(\omega) 
\geq -\varepsilon \log(2) L -\mathcal{C}(\eta) \,.
\end{equation}
Another union bound yields that
\begin{equation}\label{eq:SingleGuySublin}
\gamma(1+\eta)
\max_{L\in\N} L^{-2/3}\log(2)^{-1}\max_{w\in\mathcal{N}_L} |\tg_{L}(w)| =: \mathcal{C}_1<\infty
\end{equation} is $\mathbb{P}$-almost surely finite. Substituting \cref{eq:UnifIntEtaSmall}, \cref{eq:SingleGuySublin} into \cref{eq:UnifIntZwischen} yields that there is a $\mathbb{P}$-a.s.\@ random variable $\mathcal{C}$ such that
\begin{align*}
\widetilde{J}_{\eta,n} &\le \mathcal{C} \sum_{L=L^\ast+1}^n 2^{\gamma(1+\eta)(\gamma-\delta)L+\varepsilon L+\mathcal{C}_1 L^{2/3}-\frac{\gamma^2}{2}L} 2^{-L}f(L)^{-1}\sum_{w\in\mathcal{N}_L} \ind{w\in\mathcal{T}_{L,\gamma}} M_{L,\ta}(w)\\
&=: \mathcal{C} \sum_{L=L^\ast+1}^n \sigma_L,
\end{align*}
where we have also used that $|I_{L}(w)|\le 2^{-L}f(L)^{-1}$. We can upper bound the first moment of the summand by 
\[
\mathbb{E}[\sigma_L]\le  2^{\gamma(1+\eta)(\gamma-\delta)L+\varepsilon L+\mathcal{C}_1 L^{2/3}-\frac{\gamma^2}{2}L} 2^{-\frac{(\gamma-\delta)^2}{2} L} O_L(1) \,.
\]
For $\eta = \varepsilon = 0$ this is bounded by $O_L(L^{-2}2^{-\delta^2 L}2^{\mathcal{C}_1 L^{2/3}}) \le O_L(2^{-\varepsilon' L})$ for some $\varepsilon'>0$. Since the expression is continuous in both $\eta$ and $\varepsilon$, we can choose them so small that $\mathbb{E}[\sigma_L] \le O_L(2^{-\varepsilon'/2 L})$. By Markov's inequality and Borel-Cantelli, this implies that for all but finitely many $L$ we have $\sigma_L\le 2^{-\varepsilon'/4 L }$. This immediately implies \cref{eqn:unif-int-goal1},
finishing the proof of absolute continuity of $\widetilde{R}_{n,\gamma,1}(t)$ with respect to $\widetilde{\mu}_{\gamma,\ta}$. 

We next handle the uniform integrability of $(\widetilde{R}_{n,\gamma,2}(t))_{n\in\N}$ in a very similar manner.
As in the first step above, it suffices to show the $\P$-almost-sure finiteness of
\begin{align}\label{eqn:unif-int-goal2}
  J_{\eta,n}' := \sum_{v\in\mathcal{N}_n}  \ind{t\in \mathcal{T}_{n,\gamma}^c}\widetilde{\mu}_{\gamma,\ta}[I_n(v)] e^{\gamma(1+\eta)(\tS_{n,\tg}(v) - \tS_{n,\ta}(v))} \quad\text{$\P$-almost surely.}
\end{align}
Similar to \cref{eq:UnifIntZwischen}, we have
\[
J_{\eta,n}'\le C\sum_{v\in\mathcal{N}_n} \ind{v\in\mathcal{T}_{n,\gamma}^c} |I_n(v)| M_{n,\ta}(v) e^{-\gamma \eta \tS_{n,\ta}(v)}e^{\gamma(1+\eta) \tS_{n,\tg}(v)}2^{-\frac{\gamma^2}{2}n} \,.
\]
Using \cref{eq:UnifIntEtaSmall} again, taking $L:=n$ there, we find some $\mathcal{C}'(\eta)\in (0,\infty)$ $\P$-almost surely such that 
\[
J'_{\eta,n} \le \mathcal{C}' \sum_{v\in\mathcal{N}_n} \ind{v\in\mathcal{T}_{n,\gamma}^c} |I_n(v)| M_{n,\ta}(v) e^{\varepsilon \log(2) n}e^{\gamma(1+\eta)\tS_{n,\tg}(w)}2^{-\frac{\gamma^2}{2}n}\,.
\] 
We note that by definition,
\[
\ind{v\in\mathcal{T}_{n,\gamma}^c} \le e^{-(\gamma\eta+\delta) \tS_{n,\tg}(v)+(\gamma\eta+\delta)(\gamma-\delta)\log(2)n}\,.
\] 
Using generational independence and $\E[M_{n,\ta}(v)]=1$, the above displays give
\begin{align*}
\mathbb{E}[J_{\eta,n}']&\le 2^{\varepsilon n} 2^{\frac{(\gamma(1+\eta)-\gamma\eta -\delta)^2}{2}n+(\gamma\eta+\delta)(\gamma-\delta)n-\frac{\gamma^2}{2}n}= 2^{\varepsilon n-\frac{\delta^2}{2}n+\gamma^2\eta n -\gamma\eta\delta n}\,.
\end{align*}
By taking $\eta,\varepsilon$ close enough to $0$ this is summable, which implies \cref{eqn:unif-int-goal2}.
This finishes the proof of uniform integrability of $\widetilde{R}_{n,\gamma}(t)$ with respect to $\widetilde{\mu}_{\gamma,\ta}$.

\section{Generalizations and Open Questions}
\label{sec:generalizations-future-directions}
Here, we discuss some natural extensions and open questions raised by our work. To explain what our proof strategy can and can not handle, we frequently refer to the proof outline in \cref{subsec:proof-outline}. In particular, recall from there  the ``discretization step'' and ``coupling step.''

\subsection{Critical chaos}
The construction of the critical multiplicative chaos has not yet appeared for the random Fourier series \eqref{def:trig-field}; indeed, it is left as an open question (and for a more general class of non-Gaussian fields) in \cite[Section~3.3]{Junnila}.
Both the construction of the critical multiplicative chaos as well as absolute continuity with respect to a coupled critical GMC will be the subject of future work.

\subsection{Extension to general bounded domains}
\label{subsec:general-bounded-domains}
In this article, we considered the space $D= [0,1]$ for convenience. 
In this subsection, we discuss the extension of our result to general bounded domains in $\R^d$ (provided an a-priori estimate on the regularity of the eigenfunctions of the Dirichlet Laplacian), where the result (\cref{thm:general}) still holds only for $\gamma$ in the complement of the $L^2$-regime.

Consider a bounded domain $D \subset \R^d$ with $d\geq 2$ (in $d=1$, $D$ must be an interval, and so the situation is easily seen to be equivalent to taking $D=[0,1]$). Let $\lambda_k$ be the $k^{\mathrm{th}}$ smallest eigenvalue (with multiplicity) of $-\Delta$ in $D$ with Dirichlet boundary conditions, and let $e_k$ denote a corresponding eigenfunction, normalized so that $\|e_k\|_{L^2(D)} = 1$. Let $g:=(g_k)_{k\in\N}$ denote a sequence of i.i.d.\ standard Gaussian random variables, and
consider 
\[
  \mathcal{S}_{\infty,g}(x) :=  
  \frac{1}{(2\pi)^d}\sum_{k=1}^\infty g_k \lambda_k^{-d/4}  e_k(x) \qquad \text{where } x \in D\,.
\]
This object
is also known as the log-correlated Gaussian field in $D$
\cite[Proposition~1.1(3)]{LGF-survey}\footnote{To match the description in \cite[Proposition~1.1(3)]{LGF-survey}, note that, as explained in \cite[Section~9]{FGF-survey},  the set $\{\phi_k := \lambda_k^{-d/4}e_k\}_{k\in\N}$  forms an orthonormal basis of the Hilbert space that is the closure of Schwartz functions with respect to the inner-product
$\langle f,g\rangle_d:= \langle(-\Delta)^{d/4}f,(-\Delta)^{d/4}g\rangle_{L^2(D)}$.}. 
One can then consider the associated GMC measure $\mu_{\gamma,g}^D$, as well as the multiplicative chaos associated to the non-Gaussian log-correlated random Fourier series/random wave model
\begin{align}\label{def:general-fourier-series}
  \mathcal{S}_{n,a}(x) := \frac{1}{(2\pi)^d} \sum_{\lambda_k \leq 2^{2n}} a_k \lambda_k^{-\frac{d}{4}}  e_k(x) \qquad \text{for } x \in D\,,
\end{align}
where the $a_k$ again are i.i.d.\ satisfying \cref{eq:CondsOnak}.
The normalization constant and the range $\lambda_k \leq 2^{2n}$ are taken so that
\begin{align}
  \Cov\big(\cS_{n,a}(x), \cS_{n,a}(y)\big) = \log \Big(\frac{1}{d(x,y)} \wedge 2^n\Big) + O(1)\,,
\end{align}
where the $O(1)$ holds uniformly over all $x,y \in D$\footnote{While this formula seems to be forklore as we could not find it explicitly in the literature, we note that one can follow the proof of \cite[Theorem~3]{cgff}, which addresses the $d=2$ case and is written in that paper's Section~4, to obtain the result in any $d\geq 2$.}.
In this form, we can recognize that the critical parameter for the associated GMC is $\gamma = \sqrt{2d}$ and the $L^2$-regime is $(0,\sqrt{d})$. 

Let $\mu_{\gamma,a}^D$ denote the multiplicative chaos associated to the field $(\cS_{n,a}(x))_{x\in D, n\in\N}$. Note that the non-triviality of $\mu_{\gamma,a}^D$ for all $\gamma \in(0,\sqrt{2d})$ follows from the main result of \cite{Junnila} only if the conditions of \cite[Definition~4]{Junnila} are satisfied by the field; in particular, \cite[Eq.~(5)]{Junnila} imposes certain regularity conditions on the $e_k$.

Similarly, as long as the $e_k$ satisfy certain regularity conditions, our proof gives mutual absolute continuity between $\mu_{\gamma,a}^D$ and a coupled GMC measure $\mu_{\gamma,g}^D$ outside of the $L^2$-regime.

\begin{theorem}\label{thm:general}
Define
    \[
      \tau_n := \max_{\lambda_k \in (2^{2(n-1)}, 2^{2n}]}\|\nabla e_k\|_{\infty}^d\,,
    \]
    and suppose the following is satisfied:
    \begin{align}\label{condition-general-theorem}
     \sum_{n\in\N} n^{4d} 2^{-\frac{(\gamma^2-d)}{2}n + \log_2 (\tau_n)} < \infty\,.
    \end{align}
    Then, for $\gamma \in (\sqrt{d}, \sqrt{2d})$, there exists a sequence of i.i.d.\ standard Gaussians $g:= (g_k)_{k\in\N}$ coupled to $a := (a_k)_{k\in\N}$ such that $\mu_{\gamma,g}^D$ and $\mu_{\gamma,a}^D$  are almost surely mutually absolutely continuous.
    \end{theorem}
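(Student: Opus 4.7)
The plan is to follow the strategy of \cref{Theo:Main} step-by-step, with adaptations for the geometry of $D$. First, I would construct a nested sequence of partitions $\mathcal{P}_n$ of $D$ into $\asymp 2^{nd}$ cells of diameter $\lesssim 2^{-n}f(n)^{-1}$ with $f(n)=n^{c}$ for $c$ large enough, yielding a tree $\mathfrak{T}$ whose $n^{\mathrm{th}}$ level $\mathcal{N}_n$ indexes the cells. Representative points $t_n(v)\in I_n(v)$ are chosen so the analogue of the matrix in \eqref{def:Cn}, now indexed by frequency shells $\lambda_k\in(2^{2(n-1)},2^{2n}]$ and by $\mathcal{N}_n$, has invertible square submatrices (possible via a measure-zero perturbation as before). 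The hierarchical field is then $\tS_{n,a}(v):=\sum_{m=1}^n\ta_m(v)$ with $\ta_m(v):=(2\pi)^{-d}\sum_{\lambda_k\in(2^{2(m-1)},2^{2m}]}a_k\lambda_k^{-d/4}e_k(t_m(v))$; Weyl's law together with the covariance formula following \eqref{def:general-fourier-series} gives $\Var[\ta_m(v)]=\log 2+O(1)$, so $\tS_{n,a}$ is log-correlated in the sense of \cite[Definition~4]{Junnila}.

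The discretization step (analogue of \cref{Prop:TrigToHier}) proceeds by chaining. Within a frequency shell, the Lipschitz bound $|e_k(x)-e_k(y)|\le\|\nabla e_k\|_\infty|x-y|$ combined with Weyl's law gives an increment of $\cS_{n,a}-\tS_{n,a}$ with variance $\asymp \tau_m^{2/d}2^{-2m}$ on scales $|x-y|\asymp 2^{-m}$. Running the $d$-dimensional analogue of the chaining from \cref{Prop:TrigToHier} with a dyadic mesh of $\asymp 2^{\ell d}$ points inside each of the $\asymp 2^{md}$ cells and the Bernstein-type tail from \cite[Chapter~III, Theorem~16]{petrov}, the Borel-Cantelli condition reduces to $\sum n^{4d}\tau_n\, 2^{-(\gamma^2-d)n/2}<\infty$, which is exactly \eqref{condition-general-theorem}; this yields $\sup_n\sup_{x\in D}|\cS_{n,a}(x)-\tS_{n,a}(x)|<\infty$ a.s., hence $\widetilde{\mu}_{\gamma,\ta}^D$ is mutually absolutely continuous with $\mu_{\gamma,a}^D$.

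For the coupling step I would adapt \cref{Lem:CouplThick}. Time-$n$ thick points are defined with threshold $(\gamma-\delta)\log(2)\,n$ (the normalization of \eqref{def:general-fourier-series} gives $\Var[\tS_{n,a}]=n\log 2+O(1)$, so the critical parameter for the associated GMC is $\sqrt{2d}$), and a first-moment bound gives $|\mathcal{N}_{n+1}(\mathcal{T}_{n,\gamma})|\lesssim 2^{n(d-(\gamma-\delta)^2/2)}$ with high probability. Applying \cref{Lem:YrCoupSource} to the vector $(\ta_{n+1}(v))_{v\in\mathcal{N}_{n+1}(\mathcal{T}_{n,\gamma})}$, whose shell has $\asymp 2^{nd}$ summands and coordinate sup-norms controlled by $\lambda_k^{-d/4}\|e_k\|_\infty$, gives a coupling error that is Borel-Cantelli summable precisely when $\gamma>\sqrt d$ and \eqref{condition-general-theorem} holds. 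The i.i.d.\ Gaussian sequence $g=(g_k)_k$ is then extracted from the $\tg_n^{(i)}$ via the invertibility mechanism of \eqref{def:iid-gaussians}.

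It remains to port \cref{Prop:HierGaussAppr} and its proof in \cref{sec:pf-HierGaussAppr} to the $D$-setting. That proof is entirely structural: it uses the log-correlated Laplace-transform estimate of \cref{lem:junnila-lemma-7}, the thick-point support of the chaos (\cref{Lem:SupportThick}), and barrier-truncated first/second-moment bounds on the candidate Radon-Nikodym derivative. Each survives the passage to $D$ after replacing $\sqrt 2$ by $\sqrt{2d}$ in the relevant thresholds; the sign condition $\gamma^2/2<d$ in displays such as \eqref{eq:DiagTerm} is automatic for $\gamma<\sqrt{2d}$. The main obstacle I anticipate is bookkeeping: tracking $d$-dependence in the generation-$n$ covariance bound (the higher-dimensional analogue of \eqref{eq:CovarBoundGenn+h} needs a mean-value estimate for $e_k$ packaged through $\tau_n$) and re-verifying that the single condition \eqref{condition-general-theorem} simultaneously controls the chaining, the Yurinskii step, and the barrier moment estimates. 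No genuinely new phenomenon should arise, and the exclusion of the $L^2$-regime persists for the same reason as in \cref{Theo:Main}, namely that the Yurinskii step requires $\gamma>\sqrt d$ to keep the number of closely-coupled coordinates under control.
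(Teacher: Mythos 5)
Your partition is pitched at the wrong scale, and this is the crux of the paper's \cref{thm:general}. You take cells of diameter $\lesssim 2^{-n}f(n)^{-1}$ (the Weyl scale), giving $\asymp 2^{nd}$ cells, which recovers \eqref{def:Cn}--\eqref{eqn:size-Tn-Inj} in the case of the torus. But the paper's sketch is explicit that the cells in the $n^{\mathrm{th}}$ partition must have diameter roughly $\tau_n^{-1/d}$, not $2^{-n}$. The reason is exactly the Lipschitz computation you wrote down: the shell increment $\mathcal{X}_n(x)-\mathcal{X}_n(y)$ has $L^2$-size $\asymp \tau_n^{1/d}\|x-y\|$, so it only becomes small once $\|x-y\| \ll \tau_n^{-1/d}$. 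When $\tau_n \gg 2^{nd}$, your cells are too coarse — the per-cell fluctuation of $\mathcal{X}_n$ is $\asymp \tau_n^{1/d}2^{-n} \to \infty$, and the chaining fails to control $\sup_x|\cS_{n,a}(x)-\tS_{n,a}(x)|$. Such $\tau_n$ \emph{are} permitted by the hypothesis: the Yurinskii error computed in the paper's sketch is $(2^{2n})^{-3d/4}\cdot n^{4d}\tau_n\,2^{-\gamma^2 n/2}\cdot(2^{2n})^{d/2} = n^{4d}\tau_n 2^{-(\gamma^2+d)n/2}$ (note this gives $\gamma^2+d$ in the exponent, not $\gamma^2-d$ as written in \eqref{condition-general-theorem}; the stated exponent makes the condition fail already for the $d=1$ Fourier case $\tau_n\asymp 2^n$ covered by \cref{Theo:Main}, so it is surely a typo), and for $\gamma>\sqrt d$ this allows $\tau_n$ as large as $2^{(\gamma^2+d)n/2-\epsilon} \gg 2^{nd}$.

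There is also a bookkeeping confusion about \emph{where} \eqref{condition-general-theorem} enters. You assert the chaining Borel--Cantelli for the discretization step ``reduces to'' \eqref{condition-general-theorem}; but the chaining argument is a regularity statement about the field and never sees $\gamma$. What the partition choice controls is the tree size $|\mathcal{N}_n|\le n^{4d}\tau_n$, and it is the \emph{coupling} step that then produces \eqref{condition-general-theorem}: the Yurinskii error $\asymp 2^{-nd/2}|\mathcal{N}_n|2^{-\gamma^2 n/2}$ carries the $\tau_n$ through $|\mathcal{N}_n|$. With your $\asymp 2^{nd}$-node tree the coupling error $\asymp 2^{(d-\gamma^2)n/2}$ would never see $\tau_n$ at all, so your proposal cannot actually produce a theorem whose hypothesis involves $\tau_n$. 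To fix the proposal: use cells of diameter $\tau_n^{-1/d}n^{-4}$, yielding $|\mathcal{N}_n|\lesssim n^{4d}\tau_n$; the chaining then works unconditionally (the increment variance becomes $\asymp n^{-8}2^{-2\ell}$ as in \cref{Prop:TrigToHier}), and \eqref{condition-general-theorem} (with the corrected exponent) arises from summability of the Yurinskii error over the at-most-$n^{4d}\tau_n 2^{-\gamma^2 n/2}$ thick descendants.
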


It would be interesting future work to understand the multiplicative chaos in a domain where the eigenfunctions have poor regularity.

    \begin{remark}\label{rk:full-subcritical-higher-dim}
    Similarly to the outline in Section~\ref{subsec:proof-outline}, we demonstrate below thats the only step which needs $\gamma>\sqrt{d}$ in the proof of Theorem~\ref{thm:general} is the coupling step using the Yurinskii coupling (\cref{Lem:YurCoupl}). In \cite{ChowGang} they explain how the high dimensional coupling they prove suffices to show that for all $d\ge 2$  one can couple $\mu_{\gamma,g}^D$ and $\mu_{\gamma,a}^D$ to be almost surely mutually absolutely continuous when the domain is taken to be $D = (0,1]^d$. The restriction on the domain comes from the fact that their CLT is not covariance agnostic, but requires control on the spectral norm of the covariance matrix of $(\widetilde{a}_n(v))_{v\in \mathcal{K}}$ from \cref{def:tree-field-increments}, where $\mathcal{K}\subseteq \{1,\dots, T_n\}$ is arbitrary with $|\mathcal{K}|\le 2^n$. Their estimate on the spectral norm crucially uses that the eigenfunctions of the Dirichlet Laplacian on unit cubes are given by products of signs and cosines. Thus, extending Theorem~\ref{thm:general} to all subcritical $\gamma$ in the full generality of domains considered remains open.
    \end{remark}

We now briefly describe how the proof of \cref{thm:general} differs from the proof of \cref{Theo:Main} with the aim of explaining the regularity condition \eqref{condition-general-theorem}.

The proof of \cref{Theo:Main} was outlined in \cref{subsec:proof-outline}, and we use the terminology of this outline here. Condition \eqref{condition-general-theorem} is needed for exactly one step: the discretization step (we are able to move to a discrete hierarchical field by using the regularity of the eigenfunctions). We seek to define a nested sequence of partitions of $D$ as well as an approximation of $\cS_{n,a}$ that is piecewise constant on each piece of the $n^{\mathrm{th}}$ partition, so that the approximation can be considered as a  tree-indexed discrete field.
Define the increments
\[
  \mathcal{X}_n(x) := \cS_{n,a}(x) - \cS_{n-1,a}(x) = \sum_{\lambda_k \in (2^{2(n-1)}, 2^{2n}]} a_k \lambda_k^{-\frac{d}4}e_k(x)\,.
\] 
For two points $x, y\in D$, we have
\[
  \mathcal{X}_n(x) -\mathcal{X}_n(y) \asymp (2^{2n})^{-\frac{d}4} \sum_{\lambda_k \in (2^{2(n-1)}, 2^{2n}]} a_k \lambda_k^{-\frac{d}4} (e_k(x) -e_k(y))\,.
\]
Recall that Weyl's law states $\lambda_k \asymp k^{2/d}$, so the above sum has $\asymp (2^{2n})^{d/2}$ terms. From the variance of the right-hand side above, we see that 
\[
  \mathcal{X}_n(x) - \mathcal{X}_n(y) \lesssim  \max_{\lambda_k \in (2^{2(n-1)}, 2^{2n}]} |e_k(x) -e_k(y)| \leq \max_{\lambda_k \in (2^{2(n-1)}, 2^{2n}]}\|\nabla e_k\|_{\infty} \|x-y\|\,.
\]
Thus, $\mathcal{X}_n(x) - \mathcal{X}_n(y)$ becomes small once $\|x-y\| \ll \tau_n^{-1/d}$.
From this heuristic calculation,  we see that, in order to make the chaining argument underlying \cref{Prop:TrigToHier} work, the $n^{\mathrm{th}}$ partition in our nested sequence of partitions of $D$ should contain (slightly more than) $\tau_n$ pieces of diameter (slightly less than) $\tau_n^{-1/d}$; in particular, in analogy with the bound on $T_n$ from \cref{eqn:size-Tn-Inj}, $n^{4d} \tau_n$ pieces in the $n^{\mathrm{th}}$ partition will suffice. Thus, following the proof of \cref{Prop:TrigToHier} allows us to move to a discrete model on a tree where the set of particles $\cN_n$ in the $n^{\mathrm{th}}$ generation satisfies $|\cN_n| \leq  n^{4d} \tau_n$.

Next is the coupling step. Following the proof of \cref{Lem:YurCoupl} with the above bound on $|\cN_n|$, the Yurinskii coupling (\cref{Lem:YrCoupSource}) yields that the right-hand of \cref{eq:CouplingGoodOnThick} should be replaced by
\[
  (2^{2n})^{-\frac{3d}{4}} \cdot |\text{$\gamma$-thick points}| \cdot (2^{2n})^{\frac{d}2} \leq (2^{2n})^{-\frac{3d}{4}} \cdot |\cN_n| \cdot 2^{-\frac{\gamma^2}{2}n} \cdot (2^{2n})^{\frac{d}2} = n^{4d} 2^{-\frac{(\gamma^2-d)}{2}n + \log_2 \tau_n}\,.
\]
We need the right-hand above to be summable in $n$; this follows from condition~\eqref{condition-general-theorem} when $\gamma > \sqrt{d}$.

After the discretization and the coupling step, the proof proceeds exactly as in the proof of \cref{Theo:Main}, since we have moved completely to the comparison of two chaoses associated to  discrete models on the same tree (i.e., the space $D$ is no longer relevant).

\subsection*{Acknowledgements}
Both authors thank Ofer Zeitouni for suggesting the problem and for many helpful discussions. YHK thanks Klara Courteaut for a helpful discussion on the log-correlated Gaussian field in general domains and for pointing us to \cite{cgff}.
We thank the anonymous referee for a careful reading of the paper and for
 comments and detailed suggestions which helped us to improve
the manuscript.

\appendix
\section{Chaos measures are supported on their thick points} \label{appendix:thick}
In this section we provide a proof for Lemma~\ref{Lem:SupportThick}. This is not novel, see for example \cite[Theorem~4.1]{RV14} for a similar argument in the Gaussian case. However, we did not find the result written down explicitly in the non-Gaussian case.

\begin{proof}[Proof of Lemma~\ref{Lem:SupportThick}]
We first show  $\widetilde{\mu}_{\gamma,\widetilde{a}}[\mathcal{T}^c] = 0$.
For this purpose, we define for $j\in \N$ the sets
\begin{align*}
E_{n,j}^+ &:= \left\{t\in [0,1] : \widetilde{S}_{n,\widetilde{a}}(t) > (\gamma+1/j)\log(2)n\right\}\,,\\
E_{n,j}^- &:= \left\{t\in [0,1]: \widetilde{S}_{n,\widetilde{a}}(t) <(\gamma-1/j)\log(2)n \right\}\,.
\end{align*}
By definition, we have
\[
\mathcal{T}^c \subseteq \bigcup_{j\in\N} \limsup_{n\to\infty} (E_{n,j}^+\cup E_{n,j}^-) \,.
\]
Thus, it is enough to show that $\mathbb{P}$-a.s.\ $\widetilde{\mu}_{\gamma,\widetilde{a}}[\limsup_{n\to\infty} (E_{n,j}^{+}\cup E_{n,j}^{-})] = 0$ for all $j\in\N$. We show this using the Borel-Cantelli lemma for the measure $Q$  on $\Omega \times [0,1]$ defined by $Q[A] := \mathbb{E}[\int \indset{A} \widetilde{\mu}_{\gamma,\widetilde{a}}(dt)]$, so that it suffices to show that for all $j\in\N$,
\begin{equation}\label{eq:NessCondBorCant}
\sum_{n=1}^\infty \mathbb{E}\Big[ \widetilde{\mu}_{\gamma,\widetilde{a}}[E_{n,j}^+]+\widetilde{\mu}_{\gamma,\widetilde{a}}[E_{n,j}^-]\Big] <\infty \,.
\end{equation}
Since $\indset{E_{n,j}^+}(t)$ is constant on $I_n(w)$ for $w\in \cN_n$, we have
\begin{align*}
\mathbb{E}\bigg[ \widetilde{\mu}_{\gamma,\widetilde{a}}[E_{n,j}^+] \bigg] &= \mathbb{E}\bigg[ \sum_{w\in \cN_n} \widetilde{\mu}_{\gamma,\widetilde{a}}[I_n(w)] \ind{\widetilde{S}_{n,\widetilde{a}}(w) > (\gamma+1/j)\log(2)n}\bigg]\\
&= \sum_{w\in \cN_n} \mathbb{E}\bigg[ |I_n(w)|\ind{\widetilde{S}_{n,\widetilde{a}}(w)>(\gamma+1/j)\log(2)n} e^{\gamma \widetilde{S}_{n,\widetilde{a}}(w)-\frac{\gamma^2}{2}\log(2)n}\bigg] \,,
\end{align*}
where in the second step we have used generational independence to get rid of the contribution of all but the first $n$ generations to the above expectation. 
Next, we use the bound 
\begin{equation}\label{eq:TiltedExpCheb1}
\ind{\widetilde{S}_{n,\widetilde{a}}(w) >(\gamma+1/j)\log(2)n} \le e^{ j^{-1}\widetilde{S}_{n,\widetilde{a}}(w)-j^{-1}(\gamma+1/j) \log(2)n}
\end{equation}
together with \cref{lem:junnila-lemma-7} to obtain
\begin{equation} \label{eq:EN+Bound}
\mathbb{E}\Big[ \widetilde{\mu}_{\gamma,\widetilde{a}}[E_{n,j}^+] \Big]\le \sum_{w\in \cN_n} |I_n(w)| 2^{\frac{(\gamma+j^{-1})^2}{2}n-j^{-1}(\gamma+j^{-1})n-\frac{\gamma^2}{2}n} = 2^{-\frac{n}{2j^2}}\sum_{w\in \cN_n} |I_n(w)| = 2^{-\frac{n}{2j^2}} \,.
\end{equation}
An analogous argument, using the bound
\begin{equation*}
\ind{\widetilde{S}_{n,\widetilde{a}}(w)<(\gamma-1/j)\log(2)n} \le e^{j^{-1}(\gamma-1/j)\log(2)n-j^{-1}\widetilde{S}_{n,\widetilde{a}}(w)}
\end{equation*}
instead of \cref{eq:TiltedExpCheb1} yields
\begin{equation}\label{eq:EN-Bound}
\mathbb{E}\Big[ \widetilde{\mu}_{\gamma,\widetilde{a}}[E_{n,j}^-]\Big] \le 2^{-\frac{n}{2j^2}}.
\end{equation}
Combining \cref{eq:EN+Bound}, \cref{eq:EN-Bound} yields \cref{eq:NessCondBorCant}. That  $\widetilde{\mu}_{\gamma,\tg}[\mathcal{T}^c] = 0$ follows by taking $\ta := \tg$  above.
\end{proof} 
\section{Properties of our partitions}
\begin{lemma}\label{Lem:SubTreeSize}
Fix $n,h \in \N$ and $w\in \mathcal{N}_{n}$. Set $\mathcal{N}_{n+h}(w):= \{v\in\mathcal{N}_{n+h} : v_n = w\}$. We have
\begin{equation}\label{eq:FirstStateSubTreeSize}
|\mathcal{N}_{n+h}(w)|\le \frac{f(n+h)}{f(n)}2^{h+1}.
\end{equation}
Furthermore, we set $\mathcal{D}_{I,n}(v) := \left\{ w\in \mathcal{N}_n(v) : |t_n(w)-t_n(v)| \in I\right\}$ for $v\in\mathcal{N}_n$ and $I\subseteq \R$. We have
\begin{equation}\label{eq:SecStateSubTreeSize}
|\mathcal{D}_{I,n}| \le T_n\, |I|\,.
\end{equation}
\end{lemma}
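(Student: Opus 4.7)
My plan is to prove both inequalities by direct counting arguments that exploit the nested-grid structure of the partitions $\{\mathcal{P}_n\}_{n\geq 1}$.

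For \eqref{eq:FirstStateSubTreeSize}, I would first observe that, by construction of the tree $\mathfrak{T}$, the descendants of $w\in\mathcal{N}_n$ in generation $n+h$ are in bijection with the intervals of $\mathcal{P}_{n+h}$ contained in $I_n(w)$. Since $\mathcal{P}_{n+h}$ refines $\mathcal{P}_n$ and both endpoints of $I_n(w)$ lie in $\mathcal{S}_n\subseteq\mathcal{S}_{n+h}$, this count equals $|\mathcal{S}_{n+h}\cap I_n(w)|$. Next, I would use the telescoping $\mathcal{S}_{n+h}=\mathcal{S}_n\cup\bigcup_{k=n}^{n+h-1}(\mathcal{S}_{k+1}\setminus\mathcal{S}_k)$ together with the inclusion $\mathcal{S}_{k+1}\setminus\mathcal{S}_k\subseteq\{j/(f(k)2^k)\}_j$, so that at level $k$ the number of new points inside $I_n(w)$ is at most $|I_n(w)|f(k)2^k+1$. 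Plugging in the bound on $|I_n(w)|$ from~\eqref{eqn:size-Tn-Inj}, the level-$k$ contribution becomes $2^{k-n}f(k)/f(n)+1$; summing the resulting geometric series in $k$, using the monotonicity of $f$, and absorbing the $O(h)$ additive corrections plus the two endpoint contributions from $\mathcal{S}_n\cap I_n(w)$ into the factor of $2$ on the right-hand side of \eqref{eq:FirstStateSubTreeSize} yields the bound.

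For \eqref{eq:SecStateSubTreeSize}, I would argue analogously. If $w\in\mathcal{D}_{I,n}(v)$, then $t_n(w)$ belongs to $A:=\{t\in[0,1]:|t-t_n(v)|\in I\}$, a set of Lebesgue measure at most $2|I|$. Since the $I_n(w)$ partition $[0,1]$ and each contains its unique representative $t_n(w)$, $|\mathcal{D}_{I,n}(v)|$ is bounded by the number of intervals of $\mathcal{P}_n$ meeting $A$, which in turn is controlled by $|\mathcal{S}_n\cap A|$ plus the number of connected components of $A$. Applying the same level-by-level decomposition $\mathcal{S}_n=\mathcal{S}_1\cup\bigcup_{k<n}(\mathcal{S}_{k+1}\setminus\mathcal{S}_k)$ bounds $|\mathcal{S}_n\cap A|$ by $\sum_{k<n}(|A|f(k)2^k+1)\leq C|A|f(n-1)2^{n-1}$ for $f(n)=n^4$, and combining with the size estimate $T_n\leq f(n)2^{n+1}$ from~\eqref{eqn:size-Tn-Inj} yields \eqref{eq:SecStateSubTreeSize}.

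The proofs amount to routine bookkeeping with nested arithmetic progressions; the only care required is in handling the additive $O(1)$ boundary corrections at each level so that they are properly absorbed into the stated bounds.
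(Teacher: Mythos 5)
Your argument follows the paper's own proof almost verbatim: both count, level by level, the number of points of $\mathcal{S}_{k+1}\setminus\mathcal{S}_k$ (an arithmetic progression of step $1/(f(k)2^k)$) that land in the relevant interval or annulus, and then sum a geometric series using the bound on $|I_n(w)|$ from \eqref{eqn:size-Tn-Inj}. You are a bit more careful than the printed proof about the additive $+1$ boundary corrections and the factor of $2$ coming from the two-sidedness of $A=\{t:|t-t_n(v)|\in I\}$; as you note, these get absorbed into the factor $2^{h+1}$ in \eqref{eq:FirstStateSubTreeSize} and into the constant-factor slack with which \eqref{eq:SecStateSubTreeSize} is invoked (only one minor slip: $\mathcal{S}_n\cap I_n(w)$ contributes a single endpoint, not two, since $I_n(w)$ is half-open).
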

\begin{proof}
We first prove \cref{eq:FirstStateSubTreeSize}. By construction, we have
\begin{align*}
|\mathcal{N}_{n+h}(w)| \le \sum_{k= n+1}^{n+h} |I_n(w)| \, 2^kf(k) \le |I_n(w)| 2^{n+h+1}f(n+h) \,.
\end{align*}
The bound on $|I_n(w)|$ provided by \cref{eqn:size-Tn-Inj} yields \cref{eq:FirstStateSubTreeSize}.
We show \cref{eq:SecStateSubTreeSize} similarly: 
\begin{equation*}
|\mathcal{D}_{I,n}(v)|\le \sum_{k=1}^n |I+t_n(v)|\, 2^kf(k) \le |I|\, 2^{n+1}f(n)  = T_n\, |I|\,.  \qedhere
\end{equation*}
\end{proof}

\printbibliography

\end{document}